\def\NAT@def@citea{\def\@citea{\NAT@separator}}
\theoremstyle{plain}
\newtheorem{theorem}{Theorem}[section]
\newtheorem{lemma}[theorem]{Lemma}
\newtheorem{proposition}[theorem]{Proposition}
\theoremstyle{definition}
\newtheorem{definition}[theorem]{Definition}
\newtheorem{example}[theorem]{Example}
\theoremstyle{remark}
\newtheorem{remark}{Remark}
\begin{document}


\title{Strong convergence of an inertial Tikhonov  regularized dynamical system governed by  a maximally comonotone operator}

\author{
\name{Zeng-Zhen Tan\textsuperscript{a}, Rong Hu\textsuperscript{b} and Ya-Ping Fang\textsuperscript{a}}
\affil{\textsuperscript{a} Department of Mathematics, Sichuan University, Chengdu, Sichuan, P.R. China; \textsuperscript{b}Department of Applied Mathematics, Chengdu University of Information Technology, Chengdu, Sichuan, P.R. China}
}

\maketitle

\begin{abstract}
In a Hilbert framework, we consider an inertial Tikhonov regularized dynamical system governed by a maximally comonotone operator, where the damping coefficient is proportional to the square root of the Tikhonov regularization parameter. Under an appropriate setting of the parameters, we  prove  the strong convergence of the trajectory of the proposed system towards the minimum norm element of zeros of the underlying maximally comonotone operator. When the Tikhonov regularization parameter reduces to $\frac{1}{t^q}$ with $0<q<1$, we further establish  some convergence rate results of  the trajectories. Finally, the  validity of the proposed dynamical system is demonstrated by a numerical example.
\end{abstract}

\begin{keywords}
Inertial Tikhonov regularized dynamical system; Hessian driven damping; Maximally comonotone operator; Yosida regularization; Strong convergence; Convergence rate. 
\end{keywords}

\section{Introduction}
Throughout the paper, $\mathcal{H}$ is a real Hilbert space endowed with the scalar product  $\langle \cdot, \cdot \rangle$ and the induced norm $\|\cdot\|$, respectively. The inclusion problem is to find $x\in \mathcal{H}$ such that
\begin{eqnarray}\label{prob}
0\in \mathcal{A}(x),	
\end{eqnarray} 
where $\mathcal{A}:\mathcal{H}\rightarrow 2^{\mathcal{H}}$ is a point-to-set operator such that the solution set zer$\mathcal{A}:={\mathcal{A}}^{-1}(0)\neq\emptyset$. When the operator $\mathcal{A}$ is the subdifferential of a proper, convex and lower semicontinuous function $f: \mathcal{H}\rightarrow  \mathbb{R}\cup\{+\infty\}$, the inclusion problem $(\ref{prob})$ becomes the optimization problem
\begin{eqnarray}\label{prob2}
\min_{x\in \mathcal{H}} f(x).	
\end{eqnarray} 

A large number of literature investigates the Tikhonov regularized dynamical systems   in order to obtain the strong convergence of the trajectories to the  minimum norm solution of the problem $(\ref{prob2})$. For instance, Attouch and Czarnecki \cite{Attouch2002} studied the dynamical system 
\begin{eqnarray}\label{e1}
\ddot{x}(t)+\gamma\dot{x}(t)+\bigtriangledown f(x(t))+\varepsilon(t)x(t)=0,
\end{eqnarray}
where the damping coefficient $\gamma>0$ is a fixed  constant and $\varepsilon: [0,+\infty)\rightarrow [0,+\infty)$ satisfying $\varepsilon(t)\to 0$ as $t\to +\infty$ is the Tikhonov regularization parameter.
The system \eqref{e1} is a Tikhonov regularized version of the heavy ball with friction system  due to Polyak \cite{Polyak}
$$\ddot{x}(t)+\gamma\dot{x}(t)+\bigtriangledown f(x(t))=0,$$
 and its convergence properties depends upon the speed of  convergence of $\varepsilon(t)$ to zero.   Attouch and Czarnecki \cite{Attouch2002} showed that in the slow parametrization case $\int_{0}^{+\infty}\varepsilon(t) dt=+\infty$, the trajectory generated by $(\ref{e1})$ converges strongly to the  minimum norm solution $x^*$ of the problem $(\ref{prob2})$, i.e., $\lim_{t\to+\infty} \|x(t)-x^*\|\to 0$, and that in the fast  parametrization case $\int_{0}^{+\infty}\varepsilon(t) dt<+\infty$,  the system \eqref{e1} enjoys convergence properties same to  the heavy ball with friction system.
 
In the quest for a faster convergence, Attouch et al. \cite{Attouch2018} considered the following dynamical system with an asymptotically vanishing damping 
 \begin{eqnarray}\label{eS}
 	\ddot{x}(t)+\frac{\alpha}{t}\dot{x}(t)+\bigtriangledown f(x(t))+\varepsilon(t)x(t)=0,
 \end{eqnarray}
which involves an additional  Tikhonov regularization term $\varepsilon(t)x(t)$, compared with the following  known inertial dynamical system introduced by Su et al. \cite{Su2016}
\begin{equation}\label{avd}
\ddot{x}(t)+\frac{\alpha}{t}\dot{x}(t)+\bigtriangledown f(x(t))=0,
\end{equation}
where $\alpha\ge 3$ is a constant. Similar to the system \eqref{e1}, the convergence properties of \eqref{eS}  depend upon the vanishing speed of the parameter $\varepsilon(t)$.  Attouch et al. \cite{Attouch2018} showed that if $\varepsilon(t)$ decreases  rapidly to zero, the system $(\ref{eS})$ owns  fast convergence rates same to \eqref{avd}, and that when $\varepsilon(t)$ tends slowly to zero, the trajectory $x(t)$ converges strongly  in the inferior sense to the  minimum norm solution $x^*$ of the problem $(\ref{prob2})$, i.e., $\liminf_{t\to+\infty}\|x(t)-x^*\|=0$. To obtain the more desired strong convergence result $\lim_{t\to+\infty}\|x(t)-x^*\|=0$,  Attouch et al. \cite{Attouch2018} imposed some additional restictive assumptions on the trajectory $x(t)$.
The analysis method presented in  \cite{Attouch2018} was extended to deal with the Tikhonov regularized dynamical system with an additional explicit Hessian driven damping in \cite{Bot2021} and the Tikhonov regularized dynamical system with an additional implicit Hessian driven damping in \cite{Alecsa2021}. Meanwhile, Xu and Wen \cite{Xu2021} introduced a time scaling parameter into the system $(\ref{eS})$, resulting in the following system
\begin{equation}\label{xw}\ddot{x}(t)+\frac{\alpha}{t}\dot{x}(t)+\beta(t)\big(\bigtriangledown f(x(t))+\varepsilon(t)x(t) \big)=0,
\end{equation}
	where $\beta:[t_0,+\infty)\rightarrow (0,+\infty)$ is a non-negative continuous function, and $t_0>0$. By considering a Hessian driven damping, Zhong et al. \cite{Zhong2024} further proposed the following  system
\begin{equation}\label{zhong}
\ddot{x}(t)+\frac{\alpha}{t}\dot{x}(t)+\beta\frac{d}{dt}\big(\bigtriangledown f(x(t))+\varepsilon(t)x(t) \big)+\bigtriangledown f(x(t))+\varepsilon(t)x(t)=0,
\end{equation}
where $\beta>0$ is a constant.  Some convergergence results  similar to the ones in \cite{Attouch2018} were established in  \cite{Xu2021, Zhong2024}  for \eqref{xw} and \eqref{zhong} respectively. It is worth mentioning that  only the  result  $\liminf_{t\to+\infty}\|x(t)-x^*\|=0$ was  proved for  Tikhonov regularized inertial dynamical systems with the vanishing damp $\frac{\alpha}{t}$, without additional assumptions. Inspired by the convergence properties of the heavy ball with friction method of Polyak in the strongly convex case, Attouch et al.\cite{AttouchL2024}  considered the  Tikhonov regularized inertial dynamical system with the damping coerfficient proportional to the square root of the Tikhonov parameter, as follows
\begin{eqnarray}\label{eH}
    \ddot{x}(t)+\delta \sqrt{\varepsilon(t)}\dot{x}(t)+\bigtriangledown f(x(t))+\varepsilon(t)x(t)=0,
    \end{eqnarray}
where $\delta>0$ is a constant, and proved  the strong convergence in the inferior sense of the trajectory $x(t)$ to the minimizer $x^*$ of the  minimum norm   of $f$.  Attouch et al. \cite{AttouchB2022} further improved  the result $\lim \inf_{t\rightarrow +\infty}\|x(t)-x^*\|=0$ in \cite{AttouchL2024} by
proving $\lim_{t\rightarrow +\infty}\|x(t)-x^*\|=0$, without imposing additional assumptions on the trajectory $x(t)$.  Attouch et al. \cite{AttouchB2023} considered a variant of the system \eqref{eH} by introducing an addtional a Hessian driven damping term, and derived some strong convergence results similar to the ones in \cite{AttouchB2022}. For more results on the  Tikhonov regularized inertial dynamical systems, we refer the reader to \cite{AttouchCh2022,Csetnek2024,AttouchBC2023}.

Second order dynamical systems with vanishing damping can also be used to solve the inclusion problem $(\ref{prob})$. Attouch  and  L\'aszl\'o \cite{AttouchP2019} proposed the following inertial dynamical system
$$\ddot{x}(t)+\frac{\alpha}{t}\dot{x}(t)+\mathcal{A}_{\lambda(t)}(x(t))=0$$
for finding the zero of a maximally monotone operator $\mathcal{A}$, where $\lambda(t)$ is a time-depending parameter, and $\mathcal{A}_{\lambda}$ is the Yosida regularization of $\mathcal{A}$ with a positive regularization parameter $\lambda$.  The analysis has been extended in \cite{AttouchSC2021} by considering a Newton-like correction term
$$\ddot{x}(t)+\frac{\alpha}{t}\dot{x}(t)+\beta\frac{d}{dt}\big(\mathcal{A}_{\lambda(t)}(x(t)) \big)+\mathcal{A}_{\lambda(t)}(x(t))=0.$$
For the nonmonotone case, Tan et al. \cite{Tan2024} proposed a Newton-like inertial dynamical system for solving the inclusion problem $(\ref{prob})$ with $\mathcal{A}$ being a maximally comonotone operator (see Definition \ref{comonotone}), given by
\begin{equation}\label{Tds}
\ddot{x}(t)+\frac{\alpha}{t}\dot{x}(t)+\frac{\beta}{t}\mathcal{A}_{\eta}(x(t))+\frac{d}{dt}\big(\mathcal{A}_{\eta}(x(t))\big)=0,	
\end{equation}
where $\alpha>0$ and $\beta>0$, which has a similar structure of the following inertial dynamical system due to Bo{\c t} et al.\cite{BotC2023}
\begin{equation*}\label{Botds}
\ddot{x}(t)+\frac{\alpha}{t}\dot{x}(t)+\beta(t)\frac{d}{dt}V(x(t))+\frac{1}{2}\big(\dot{\beta}(t)+\frac{\alpha}{t}\beta(t))\big)V(x(t))=0,	
\end{equation*}
where $V:\mathcal{H}\rightarrow \mathcal{H}$ is a monotone and continuous operator and $\beta:[t_0,+\infty)\rightarrow (0,+\infty)$ is a continuously differentiable and nondecreasing function. Tan et al. \cite{Tanar} also considered an implicit Newton-like inertial dynamical system for finding the zero of the when  maximally comonotone operator $\mathcal{A}$. In general, only the weak convergence of the trajectories of the aforementioned dynamical systems can be proved. In order to obtain strong convergence of the trajectories to the   minimum norm solution of  the inclusion problem $(\ref{prob})$, Tikhonov regularization techniques can be used, such as the optimization problem (\ref{prob2}). The Tikhonov regularization traced back to the work of Tikhonov and Arsenine \cite{Tikhonov}.  Tossings \cite{Tossings} extended the Tikhonov regularization to deal with monotone inclusion problems in Hilbert spaces.  Moudafi \cite{Moudafi} proposed a Tikhonov regularization method to find the element of   minimum norm of zeros of a $\gamma-$hypomonotone operator  in Hilbert spaces. When $\mathcal{A}$ is a maximally monotone operator on a Hilbert space,  Cominetti et al. \cite{Cominetti} introduced the following first order dynamical system 
$$-\dot{x}(t)\in \mathcal{A}(x(t))+\varepsilon(t)x(t),$$
and proved its trajectory $x(t)$ converges strongly to the  element of   minimum norm of zeros of  $\mathcal{A}$ provided that  $\varepsilon(t)$ tends to zero as $t\rightarrow +\infty$  and $\int_{0}^{+\infty}\varepsilon(t) dt=+\infty$. Recently, Bo{\c t} et al. \cite{Bot2024} considered the following second order dynamical system
\begin{eqnarray}\label{bot}
\ddot{x}(t)+\frac{\alpha}{t^q}+\beta\frac{d}{dt} \big( \mathcal{A}_{\lambda(t)}(x(t)) \big)+\mathcal{A}_{\lambda(t)}(x(t))+\varepsilon(t)x(t)=0,
\end{eqnarray}
where $\mathcal{A}:\mathcal{H}\rightarrow 2^{\mathcal{H}}$ is a maximally monotone operator, $\alpha>0,\beta\ge 0, 0<q\le 1$ and $\lambda:[t_0,+\infty)\to (0,+\infty)$ is the Yosida parametrization function and $\varepsilon:[t_0,+\infty)\to (0,+\infty)$ is the Tikhonov parametrization function.
Similar to systems \eqref{e1} and  \eqref{eS}, the convergence properties of  \eqref{bot}  depend upon the vanishing speed of the Tikhonov parametrization function $\varepsilon(t)$. Bo{\c t} et al. \cite{Bot2024} proved that the system  \eqref{bot}  exhibits fast convergence rates when  $\varepsilon(t)$ tends rapidly to zero, and that the trajectory of \eqref{bot} converges strongly to the element of  minimum norm of zeros of $\mathcal{A}$. In this paper we consider the following Tikhonov regularized inertial dynamical system governed by a maximally comonotone operator  $\mathcal{A}$
\begin{equation}\label{DS}
\ddot{x}(t)+\delta\sqrt{\epsilon(t)}\dot{x}(t)+\frac{1}{\gamma\sqrt{\epsilon(t)}}\frac{d}{dt} \big(\mathcal{A}_{\eta}x(t)+\epsilon(t)x(t) \big)+\mathcal{A}_{\eta}x(t)+\epsilon(t)x(t)=0
\end{equation}	
where $\delta>0$, $\gamma>0$ and $\epsilon:[t_0, +\infty)\rightarrow (0,+\infty)$ is nonincreasing function, of class $\mathcal{C}^{1}$, such that $\lim_{t\rightarrow +\infty}\epsilon(t)=0$. Under suitable conditions, we shall prove the strong convergence of the trajectory of \eqref{DS} to the minimum norm solution  of  the problem \eqref{prob}.

The paper is organized as follows: Section \ref{S2} presents some basic notation and preliminary results, which we will need in our analysis. In Section \ref{S3}, the global existence and uniqueness result is established for the system $(\ref{DS})$. In Section \ref{S4}, for a general Tikhonov regularization parameter $\epsilon(t)$, we establish the strong convergence of the trajectories to the minimum norm solution of the maximal comonotone inclusion problem. In Section \ref{Pc}, we apply these results to the particular case $\epsilon(t)=\frac{1}{t^q}$. Finally, in Section \ref{Ex} we perform a numerical experiment to illustrate the theoretical results.

\section{Preliminaries}\label{S2}
Given a point-to-set operator $\mathcal{A}: \mathcal{H}\rightarrow 2^{\mathcal{H}}$, it is totally characterized by its graph grap$ \mathcal{A}=\{(x,u)\in \mathcal{H}\times \mathcal{H}: u\in \mathcal{A}x\}$. The domain of $\mathcal{A}$ is the set $\text{ Dom } \mathcal{A}=\{x\in \mathcal{H}: \mathcal{A}x\ne\emptyset\}$.   The inverse of $\mathcal{A}$ is the operator ${\mathcal{A}}^{-1}: \mathcal{H}\rightarrow 2^{\mathcal{H}}$ well-defined through the equivalence $x\in {\mathcal{A}}^{-1}u$ if and only if $u\in \mathcal{A}x$. The set of zeros of $\mathcal{A}$ is the set zer$\mathcal{A}=\{x\in \mathcal{H}:0\in \mathcal{A}x\}$.
Given $\lambda>0$, the resolvent of index $\lambda$ of $\mathcal{A}$ is the operator $J_{\lambda}^{\mathcal{A}}: \mathcal{H}\rightarrow 2^{\mathcal{H}}$ given by 
$$J_{\lambda}^{\mathcal{A}}=(Id+\lambda \mathcal{A})^{-1},$$
and the Yosida regularization of index $\lambda$ of $\mathcal{A}$ is the operator $\mathcal{A}_{\lambda}:\mathcal{H}\rightarrow 2^{\mathcal{H}}$ given by 
\begin{equation}\label{Yosida}
\mathcal{A}_{\lambda}=\frac{1}{\lambda}(Id-J_{\lambda}^{\mathcal{A}}),	
\end{equation}
where $Id$ is the identity operator of $\mathcal{H}$.

Let $B:\mathcal{H}\rightarrow \mathcal{H}$  and let $\theta\in (0,1)$. Recall that (see \cite{BauschkeC}) 
 \begin{itemize}
 \item[(i)] $B$ is $\beta$-Lipschitz continuous for some $\beta>0$ if 
 $$\|Bx-By\|\leq \|x-y\|,~ \forall (x,y)\in \mathcal{H}\times \mathcal{H}.$$ 
When $\beta=1$, $B$ is called nonexpansive.
 \item[(ii)] $B$ is $\theta-$averaged if there exists a nonexpansive operator $N:\mathcal{H}\rightarrow \mathcal{H}$ such that $B=(1-\theta)Id+\theta N$; equivalently, we have 
 $$(1-\theta)\|(Id-B)x-(Id-B)y\|^2\leq \theta(\|x-y\|^2-\|Bx-By\|^2),~\forall (x,y)\in \mathcal{H}\times \mathcal{H}.$$	
 \item[(iii)] $B:\mathcal{H}\rightarrow \mathcal{H}$ is $\beta$-cocoercive for some $\beta>0$ if
$$ \langle Bx-By, x-y\rangle\geq \beta \|Bx-By\|^2, ~\forall (x,y)\in \mathcal{H}\times \mathcal{H}.$$
In this case, $B$ is $\frac{1}{\beta}-$Lipschitz continuous.
 \end{itemize}

\begin{definition} (See \cite[Definition 2.3] {BauschkeMW}) \label{comonotone}
 Let $\mathcal{A}:\mathcal{H}\rightarrow 2^{\mathcal{H}}$ and $\rho\in \mathbb{R}$. Then  
  \begin{itemize}
 \item[(i)] $\mathcal{A}$ is $\rho-$monotone if $\forall (x,u)\in \mbox{gra} \mathcal{A}$, $\forall (y,v)\in \mbox{gra}\mathcal{A}$, we have
 $$\langle x-y,u-v\rangle \geq \rho \|x-y\|^2.$$
 Clearly,  $\mathcal{A}$ is  $\rho-$monotone if and only if $\mathcal{A}-\rho Id$ is monotone. 
\item[(ii)] $\mathcal{A}$ is maximally $\rho-$monotone if $\mathcal{A}$ is $\rho-$monotone and there is no other $\rho-$monotone operator $\mathcal{B}:\mathcal{H}\rightarrow 2^{\mathcal{H}}$ such that $\text{gra} \mathcal{B}$ properly contains $\text{gra}\mathcal{A}$.\\ 
By definition, for a $\rho-$monotone operator $\mathcal{A}$ to be maximally $\rho-$monotone, we'll have to justify that: 
 $$  \langle x-y,u-v\rangle \geq \rho \|x-y\|^2, \forall(y,v)\in \mbox{gra}\mathcal{A}\quad \Rightarrow\quad   (x,u)\in \mbox{gra}\mathcal{A}.$$
 \item[(iii)] $\mathcal{A}$ is $\rho-$comonotone if $\forall (x,u)\in \mbox{gra} \mathcal{A}$, $\forall (y,v)\in \mbox{gra}\mathcal{A}$, 
$$\langle x-y,u-v\rangle \geq \rho \|u-v\|^2.$$
 Then,  $\mathcal{A}$ is $\rho-$comonotone if and only if   ${\mathcal{A}}^{-1}-\rho Id$ is monotone.
\item[(iv)] $\mathcal{A}$ is maximally $\rho-$comonotone if $\mathcal{A}$ is $\rho-$comonotone and there is no other $\rho-$comonotone operator $\mathcal{B}:\mathcal{H}\rightarrow 2^{\mathcal{H}}$ such that gra$\mathcal{B}$ properly contains gra$\mathcal{A}$.\\
Then, for a $\rho-$comonotone operator $\mathcal{A}$ to be maximally $\rho-$comonotone, we'll have to justify that: 
 $$  \langle x-y,u-v\rangle \geq \rho \|u-v\|^2 , \forall(y,v)\in \mbox{gra}\mathcal{A}\quad\Rightarrow \quad (x,u)\in \mbox{gra}\mathcal{A}.$$	
 \end{itemize}
\end{definition}
\begin{remark} (See \cite[Remark 2.4]{BauschkeMW})\label{hypo}
\begin{itemize}
\item[(i)] When $\rho=0$, both $\rho-$monotonicity of $\mathcal{A}$ and $\rho-$comonotonicity of $\mathcal{A}$ reduce to the monotonicity of $\mathcal{A}$; Equivalently to the monotonicity of ${\mathcal{A}}^{-1}$.
\item[(ii)] When $\rho<0$, $\rho-$monotonicity is know as $\rho-$hypomonotonicity, see \cite[Example 12.28] {RockafellarWets} and \cite [Definition 6.9.1] {Burachik}. In this case, the $\rho-$comonotonicity is also known as $\rho-$cohypomonotonicity (see \cite[Definition2.2]{CP}).
\item[(iii)] In passing, we point out that when $\rho>0$, $\rho-$monotonicity of  $\mathcal{A}$ reduces to $\rho-$strong monotonicity of $\mathcal{A}$, while $\rho-$comonotonicity of $\mathcal{A}$ reduces to $\rho-$cocoercivity of $\mathcal{A}$.
\end{itemize}	
\end{remark}

\begin{proposition} (See \cite[Proposition 2.2, Proposition 2.3] {Tan2024})\label{z}
Suppose that $\mathcal{A}:\mathcal{H}\rightarrow 2^{\mathcal{H}}$ is a $\rho-$comonotone operator, $\rho\in \mathbb{R}$ and $\eta>\max\{-2\rho,0\}$. Then  $J_{\eta}^{\mathcal{A}}$ is single-valued, $\text{Dom} J_{\eta}^{\mathcal{A}}=\mathcal{H}$,  and the following conclusions hold:  
\begin{enumerate}
\item[(i)]$\mathcal{A}$ is maximally $\rho-$comonotone $\Leftrightarrow$ ${\mathcal{A}}^{-1}-\rho Id$ is maximally monotone.
\item[(ii)]$\mathcal{A}$ is maximally $\rho-$comonotone $\Leftrightarrow$ $\mathcal{A}_{\eta}$ is $(\rho+\eta)-$cocoercive.
\item[(iii)]$J_{\eta}^{\mathcal{A}}: \mathcal{H}\rightarrow \mathcal{H}$ is $\frac{\eta}{2(\rho+\eta)}-$averaged and ${\mathcal{A}}_{\eta}:  \mathcal{H}\rightarrow \mathcal{H}$ is $\frac{1}{\rho+\eta}-$Lipschitz continuous.
\item[(iv)]$\xi\in J_{\eta}^{\mathcal{A}}x \Leftrightarrow \big(\xi, {\eta}^{-1}(x-\xi)\big)\in \mbox{gra}\mathcal{A}$.
\end{enumerate} 
\end{proposition}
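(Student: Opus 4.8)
The strategy is to reduce every assertion to the classical theory of maximally monotone operators through the auxiliary operator $\mathcal{B}:={\mathcal{A}}^{-1}-\rho Id$. By Definition~\ref{comonotone}(iii), $\mathcal{A}$ is $\rho$-comonotone if and only if $\mathcal{B}$ is monotone. Examining the cases $\rho\ge 0$ and $\rho<0$ separately, the hypothesis $\eta>\max\{-2\rho,0\}$ yields $\eta>0$, $\eta+\rho>0$ and $2(\rho+\eta)-\eta=2\rho+\eta>0$; these three inequalities are used repeatedly. The central bridge is the \emph{resolvent identity}: unwinding $\xi\in J_\eta^{\mathcal{A}}x=(Id+\eta\mathcal{A})^{-1}x$ gives $\eta^{-1}(x-\xi)\in\mathcal{A}\xi$, i.e.\ $\big(\xi,\eta^{-1}(x-\xi)\big)\in\mathrm{gra}\,\mathcal{A}$ --- which is precisely item~(iv) --- and, setting $u=\eta^{-1}(x-\xi)$ so that $\xi=x-\eta u$, this is in turn equivalent to $x\in\big((\eta+\rho)Id+\mathcal{B}\big)u$.

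Single-valuedness of $J_\eta^{\mathcal{A}}$ needs only $\rho$-comonotonicity: if $\xi_1,\xi_2\in J_\eta^{\mathcal{A}}x$, testing the comonotone inequality on the pairs $\big(\xi_i,\eta^{-1}(x-\xi_i)\big)$, $i=1,2$, gives $\eta^{-2}(\eta+\rho)\|\xi_1-\xi_2\|^2\le 0$, whence $\xi_1=\xi_2$. For item~(i) I would observe that graph inversion $\mathcal{C}\mapsto\mathcal{C}^{-1}$ and the translation $\mathcal{D}\mapsto\mathcal{D}-\rho Id$ are bijections on set-valued operators that preserve graph inclusions, with strict inclusions going to strict inclusions; since $\mathcal{C}$ is $\rho$-comonotone exactly when $\mathcal{C}^{-1}-\rho Id$ is monotone, a proper $\rho$-comonotone extension of $\mathcal{A}$ exists if and only if a proper monotone extension of $\mathcal{B}$ exists, which is exactly the statement that $\mathcal{A}$ is maximally $\rho$-comonotone $\Leftrightarrow$ ${\mathcal{A}}^{-1}-\rho Id$ is maximally monotone. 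Granting maximality, $\frac{1}{\eta+\rho}\mathcal{B}$ is maximally monotone, so $(\eta+\rho)Id+\mathcal{B}=(\eta+\rho)\big(Id+\frac{1}{\eta+\rho}\mathcal{B}\big)$ is onto $\mathcal{H}$ by Minty's theorem; combined with the resolvent identity this gives $\mathrm{Dom}\,J_\eta^{\mathcal{A}}=\mathcal{H}$.

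For item~(ii) the engine is a one-line estimate. Writing $p=J_\eta^{\mathcal{A}}x$, $q=J_\eta^{\mathcal{A}}y$, $u=\mathcal{A}_\eta x=\eta^{-1}(x-p)$, $v=\mathcal{A}_\eta y=\eta^{-1}(y-q)$, we have $(p,u),(q,v)\in\mathrm{gra}\,\mathcal{A}$ and $x-y=(p-q)+\eta(u-v)$, so
\[
\langle x-y,\,u-v\rangle \;=\; \langle p-q,\,u-v\rangle+\eta\|u-v\|^2 \;\ge\; (\rho+\eta)\|u-v\|^2
\]
by $\rho$-comonotonicity of $\mathcal{A}$, which is exactly $(\rho+\eta)$-cocoercivity of $\mathcal{A}_\eta$. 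For the converse I would run the same identity backwards: given $(p,u),(q,v)\in\mathrm{gra}\,\mathcal{A}$, put $x=p+\eta u$, $y=q+\eta v$ (so $\mathcal{A}_\eta x=u$, $\mathcal{A}_\eta y=v$) to recover $\langle p-q,u-v\rangle\ge\rho\|u-v\|^2$, and obtain maximality by the standard extension trick: if $(x_0,u_0)$ satisfies the comonotone inequality against all of $\mathrm{gra}\,\mathcal{A}$, apply it to the pair $\big(J_\eta^{\mathcal{A}}\tilde x,\ \mathcal{A}_\eta\tilde x\big)$ with $\tilde x:=x_0+\eta u_0$ and use $x_0-J_\eta^{\mathcal{A}}\tilde x=-\eta\big(u_0-\mathcal{A}_\eta\tilde x\big)$ together with $\eta+\rho>0$ to force $(x_0,u_0)\in\mathrm{gra}\,\mathcal{A}$. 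Finally, item~(iii): a $(\rho+\eta)$-cocoercive operator is $\frac{1}{\rho+\eta}$-Lipschitz by Cauchy--Schwarz, as already recorded in the preliminaries, and since $J_\eta^{\mathcal{A}}=Id-\eta\mathcal{A}_\eta$ with $0<\eta<2(\rho+\eta)$, the standard fact that $Id-\lambda T$ is $\frac{\lambda}{2\beta}$-averaged whenever $T$ is $\beta$-cocoercive and $\lambda\in(0,2\beta)$ shows that $J_\eta^{\mathcal{A}}$ is $\frac{\eta}{2(\rho+\eta)}$-averaged.

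The part I expect to require the most care is the maximality bookkeeping --- verifying rigorously that the bijections $\mathcal{C}\mapsto\mathcal{C}^{-1}$ and $\mathcal{D}\mapsto\mathcal{D}-\rho Id$ transport the ``no proper extension'' property in both directions (this underpins~(i) and the maximality half of~(ii)) and correctly invoking Minty's surjectivity theorem for the strongly monotone operator $(\eta+\rho)Id+\mathcal{B}$ to secure $\mathrm{Dom}\,J_\eta^{\mathcal{A}}=\mathcal{H}$. The cocoercivity and averagedness computations are otherwise routine.
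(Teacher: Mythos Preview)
The paper does not supply its own proof of this proposition: it is quoted verbatim from \cite[Proposition~2.2, Proposition~2.3]{Tan2024} without argument, so there is nothing in the paper to compare your proposal against. Your plan is nonetheless correct and self-contained: the reduction to the monotone operator $\mathcal{B}=\mathcal{A}^{-1}-\rho Id$, the resolvent identity giving~(iv), the graph-bijection argument for~(i), Minty's theorem for full domain, the one-line cocoercivity estimate for~(ii), and the standard ``$Id-\lambda T$ is $\tfrac{\lambda}{2\beta}$-averaged when $T$ is $\beta$-cocoercive'' for~(iii) are exactly the right ingredients. You are also right to flag that $\mathrm{Dom}\,J_\eta^{\mathcal{A}}=\mathcal{H}$ genuinely requires \emph{maximal} $\rho$-comonotonicity (not mere $\rho$-comonotonicity as the proposition's preamble literally reads); this is a small imprecision in the paper's statement, and your proof handles it correctly by invoking Minty only after maximality is assumed.
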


\begin{proposition}(See \cite[Proposition 3]{Tanar})\label{der}
Let $\mathcal{A}:\mathcal{H}\rightarrow 2^{\mathcal{H}}$ be maximally $\rho-$comonotone, $\rho\in \mathbb{R}$, $\eta>\max\{-2\rho,0\}$, and let $x(t)$ be a differentiable function. Then 
$$\bigg \langle \dot{x}(t),~ \frac{d}{dt}\mathcal{A}_{\eta}x(t)\bigg \rangle \geq 0.$$
\end{proposition}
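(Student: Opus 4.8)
The plan is to obtain the inequality directly from the cocoercivity of the Yosida regularization together with a standard difference-quotient passage to the limit. First I would record that the hypothesis $\eta>\max\{-2\rho,0\}$ guarantees $\rho+\eta>0$: if $\rho\ge 0$ then $\eta>0$ gives $\rho+\eta>0$, and if $\rho<0$ then $\eta>-2\rho$ gives $\rho+\eta>-\rho>0$. By Proposition \ref{z}(ii), $\mathcal{A}_\eta$ is then $(\rho+\eta)$-cocoercive with a strictly positive constant, so
\[
\langle \mathcal{A}_\eta u-\mathcal{A}_\eta v,\, u-v\rangle \;\ge\; (\rho+\eta)\,\|\mathcal{A}_\eta u-\mathcal{A}_\eta v\|^2 \;\ge\; 0, \qquad \forall\, u,v\in\mathcal{H}.
\]

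Next, fixing $t$ and applying this with $u=x(s)$, $v=x(t)$ for $s$ near $t$ (the left-hand side being symmetric in $u,v$), I get
\[
\bigl\langle \mathcal{A}_\eta x(s)-\mathcal{A}_\eta x(t),\; x(s)-x(t)\bigr\rangle \;\ge\; 0 .
\]
Dividing by $(s-t)^2>0$ and letting $s\to t$, and using that $x$ is differentiable at $t$, that $t\mapsto \mathcal{A}_\eta x(t)$ is differentiable at $t$, and continuity of the inner product, I would conclude
\[
\Bigl\langle \dot{x}(t),\; \tfrac{d}{dt}\mathcal{A}_\eta x(t)\Bigr\rangle \;=\; \lim_{s\to t}\Bigl\langle \tfrac{x(s)-x(t)}{s-t},\; \tfrac{\mathcal{A}_\eta x(s)-\mathcal{A}_\eta x(t)}{s-t}\Bigr\rangle \;\ge\; 0 ,
\]
which is the assertion.

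The one point that needs care is the legitimacy of the limit, i.e.\ that the difference quotients $\tfrac{1}{s-t}\bigl(\mathcal{A}_\eta x(s)-\mathcal{A}_\eta x(t)\bigr)$ converge as $s\to t$; this is not automatic from differentiability of $x$ alone. Here the Lipschitz property of $\mathcal{A}_\eta$ from Proposition \ref{z}(iii) helps: these quotients are bounded in norm by $\tfrac{1}{\rho+\eta}\cdot\tfrac{\|x(s)-x(t)\|}{|s-t|}$, which tends to $\tfrac{\|\dot{x}(t)\|}{\rho+\eta}$, so $\mathcal{A}_\eta\circ x$ is locally Lipschitz, hence differentiable at almost every $t$, and the argument above is valid at every such point. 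Consistent with the regularity implicit in the appearance of $\tfrac{d}{dt}\mathcal{A}_\eta x(t)$ in the system \eqref{DS}, the conclusion is to be read at points where this derivative exists, which suffices for the subsequent analysis.
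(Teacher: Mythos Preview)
Your argument is correct. The paper does not give its own proof of this proposition---it is quoted from \cite{Tanar}---so there is nothing to compare against here; your route via the cocoercivity of $\mathcal{A}_\eta$ (Proposition~\ref{z}(ii)) and passage to the limit in the difference quotient is the natural one, and your remark that the Lipschitz property of $\mathcal{A}_\eta\circ x$ (from Proposition~\ref{z}(iii)) yields a.e.\ differentiability is the right way to make sense of $\tfrac{d}{dt}\mathcal{A}_\eta x(t)$, consistently with how the paper uses this quantity.
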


\begin{proposition}\label{solu}
Let $\mathcal{A}:\mathcal{H}\rightarrow 2^{\mathcal{H}}$ be a maximally $\rho-$comonotone operator such that $\mbox{zer} \mathcal{A}\neq\emptyset$. Let $t\longmapsto \epsilon(t)$ be a positive functions defined on $[t_0, +\infty)$ and $\eta>\max\{-2\rho,0\}$. Then $\|x_{\epsilon(t)}\|\leq\|x^*\|$ for all $t\geq t_0$, where $x^*$ is the element of  minimum norm of ${\mbox{zer} \mathcal{A}}(0)$ and $x_{\epsilon(t)}$ denotes the unique zero of the strongly operator $\mathcal{A}_{\eta}+\epsilon(t)Id$, that is, $\mathcal{A}_{\eta}(x_{\varepsilon(t)})+\varepsilon(t)x_{\varepsilon(t)}=0$. Assume further that $\lim_{t\rightarrow +\infty}\epsilon(t)=0$. Then $x_{\epsilon(t)}$ converges strongly to $x^*$ as $t\rightarrow +\infty$.
\end{proposition}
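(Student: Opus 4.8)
\emph{Proof proposal.} The plan is to route everything through the Yosida regularization $\mathcal{A}_{\eta}$, which Proposition~\ref{z} makes well behaved. Since $\eta>\max\{-2\rho,0\}$ forces $\rho+\eta>0$, Proposition~\ref{z}(ii)--(iii) gives that $\mathcal{A}_{\eta}:\mathcal{H}\to\mathcal{H}$ is single-valued, everywhere defined, $(\rho+\eta)$-cocoercive (in particular monotone) and $\frac{1}{\rho+\eta}$-Lipschitz continuous; hence $\mathcal{A}_{\eta}$ is maximally monotone. From Proposition~\ref{z}(iv) one gets $\mathcal{A}_{\eta}x=0\iff x=J_{\eta}^{\mathcal{A}}x\iff 0\in\mathcal{A}x$, so $\mbox{zer}\,\mathcal{A}_{\eta}=\mbox{zer}\,\mathcal{A}$; being the zero set of a maximally monotone operator this set is closed and convex (and nonempty by hypothesis), so its element of minimal norm $x^*$ is well defined and is the nearest point of $\mbox{zer}\,\mathcal{A}$ to the origin. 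For each fixed $t\ge t_0$ the operator $\mathcal{A}_{\eta}+\epsilon(t)Id$ is $\epsilon(t)$-strongly monotone and Lipschitz continuous, hence admits a unique zero $x_{\epsilon(t)}$, which justifies the notation of the statement.

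Next I would establish the a priori bound. Fix $t$ and abbreviate $\epsilon=\epsilon(t)$, $x_{\epsilon}=x_{\epsilon(t)}$. By definition $\mathcal{A}_{\eta}(x_{\epsilon})=-\epsilon x_{\epsilon}$, while $\mathcal{A}_{\eta}(x^*)=0$ because $x^*\in\mbox{zer}\,\mathcal{A}=\mbox{zer}\,\mathcal{A}_{\eta}$. Applying monotonicity of $\mathcal{A}_{\eta}$ to the pair $x_{\epsilon},x^*$ yields $\langle -\epsilon x_{\epsilon}-0,\,x_{\epsilon}-x^*\rangle\ge 0$, that is $\|x_{\epsilon}\|^2\le\langle x_{\epsilon},x^*\rangle\le\|x_{\epsilon}\|\,\|x^*\|$, whence $\|x_{\epsilon(t)}\|\le\|x^*\|$ for all $t\ge t_0$.

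Finally, assume $\epsilon(t)\to 0$ and prove strong convergence. Let $(t_n)$ be an arbitrary sequence with $t_n\to+\infty$; by the bound just proved $(x_{\epsilon(t_n)})_n$ is bounded, so along a subsequence $x_{\epsilon(t_{n_k})}\rightharpoonup\bar x$. Since $\mathcal{A}_{\eta}(x_{\epsilon(t_{n_k})})=-\epsilon(t_{n_k})\,x_{\epsilon(t_{n_k})}\to 0$ strongly and the graph of the maximally monotone operator $\mathcal{A}_{\eta}$ is sequentially closed in the weak$\times$strong topology, we obtain $\mathcal{A}_{\eta}(\bar x)=0$, i.e. $\bar x\in\mbox{zer}\,\mathcal{A}_{\eta}=\mbox{zer}\,\mathcal{A}$. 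Weak lower semicontinuity of the norm together with the a priori bound gives $\|\bar x\|\le\liminf_k\|x_{\epsilon(t_{n_k})}\|\le\|x^*\|$; since $x^*$ is the unique nearest point of the closed convex set $\mbox{zer}\,\mathcal{A}$ to the origin and $\bar x\in\mbox{zer}\,\mathcal{A}$, this forces $\|\bar x\|=\|x^*\|$ and hence $\bar x=x^*$. Moreover $\|x_{\epsilon(t_{n_k})}\|\to\|x^*\|=\|\bar x\|$, which combined with $x_{\epsilon(t_{n_k})}\rightharpoonup\bar x$ yields strong convergence $x_{\epsilon(t_{n_k})}\to x^*$. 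As every sequence $t_n\to+\infty$ has a subsequence along which $x_{\epsilon(t_n)}$ converges strongly to the same limit $x^*$, we conclude $x_{\epsilon(t)}\to x^*$ strongly as $t\to+\infty$. The only step that needs genuine care is the identification of the weak cluster point $\bar x$ as a zero of $\mathcal{A}$: it relies precisely on the Yosida regularization rendering $\mathcal{A}_{\eta}$ single-valued, everywhere defined and maximally monotone, so that $\mathcal{A}_{\eta}(x_{\epsilon(t_{n_k})})$ is defined along the whole sequence and demiclosedness of its graph applies; the remaining ingredients (the a priori bound, the norm-convergence-implies-strong-convergence step, and the subsequence argument) are routine.
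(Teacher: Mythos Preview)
Your proof is correct and follows essentially the same classical Tikhonov-approximation argument that the paper invokes (the paper establishes $\epsilon(t)$-strong monotonicity of $\mathcal{A}_{\eta}+\epsilon(t)Id$ and then defers to \cite[Proposition~5]{Bot2024} for the remainder). The only noteworthy variant is in the identification of the weak cluster point: you use demiclosedness of the graph of $\mathcal{A}_{\eta}$, which is immediate since $\mathcal{A}_{\eta}$ is continuous and maximally monotone, whereas the paper appeals to the weak$\times$strong sequential closedness of the graph of the original comonotone operator $\mathcal{A}$ via \cite[Proposition~2.4]{Tan2024}; your route is slightly more self-contained.
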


\begin{proof}
According to Proposition \ref{z},  $\mathcal{A}_{\eta}$ is $(\rho+\eta)-$cocoercive. For any $x,y \in \mathcal{H}$, we get 
$$\langle \mathcal{A}_{\eta}x+\epsilon(t)x-\mathcal{A}_{\eta}y-\epsilon(t)y, x-y\rangle=\langle \mathcal{A}_{\eta}x-\mathcal{A}_{\eta}y,x-y\rangle+\epsilon(t)\|x-y\|^2\geq \epsilon(t)\|x-y\|^2.$$	
Hence, $\mathcal{A}_{\eta}+\epsilon(t)Id$ is $\epsilon(t)$-strongly monotone. By \cite[Proposition 2.4]{Tan2024},  the graph of $\mathcal{A}$ is sequentially closed in the weak $\times$ strong topology. The rest of the proof is similar to the one of  \cite[Proposition 5]{Bot2024}, and so we omit it.
\end{proof}

\begin{lemma}\label{ineq}
Let $\mathcal{A}:\mathcal{H}\rightarrow 2^{\mathcal{H}}$ be a maximally $\rho-$comonotone operator, $\epsilon:[t_0, +\infty)\rightarrow [0,+\infty)$ a nonincreasing function of class $\mathcal{C}^{1}$ and $\eta>\max\{-2\rho,0\}$. For every $t\in [t_0,+\infty)$ let $x_{\epsilon(t)}$ be the unique zero of the operator $\mathcal{A}_{\eta}+\epsilon(t)Id$. Then $t\longmapsto x_{\epsilon(t)}$ is almost everywhere differentiable and 
$$\big\|\frac{d}{dt}x_{\epsilon(t)}\big\|\leq-\frac{\dot{\epsilon}(t)}{\epsilon(t)}\big\|x_{\epsilon(t)}\big\| ~~\mbox{for almost all}~~ t\geq t_0.$$
\end{lemma}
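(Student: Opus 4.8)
The plan is to first derive a quantitative Lipschitz-type estimate for $t\mapsto x_{\epsilon(t)}$ in terms of the variation of $\epsilon$, and then to read off from it both the almost everywhere differentiability and the pointwise bound by letting the two time instants coalesce. Fix $s,t\in[t_0,+\infty)$ and abbreviate $a=x_{\epsilon(t)}$, $b=x_{\epsilon(s)}$, $\alpha=\epsilon(t)$, $\beta=\epsilon(s)$. From the defining relations $\mathcal{A}_{\eta}a+\alpha a=0$ and $\mathcal{A}_{\eta}b+\beta b=0$ we get
$$\mathcal{A}_{\eta}a-\mathcal{A}_{\eta}b=-\alpha a+\beta b=\beta(b-a)+(\beta-\alpha)a.$$
Since $\eta>\max\{-2\rho,0\}$ forces $\rho+\eta>0$, Proposition \ref{z}(ii) gives that $\mathcal{A}_{\eta}$ is $(\rho+\eta)$-cocoercive, hence monotone; pairing the identity above with $a-b$ and using monotonicity of $\mathcal{A}_{\eta}$ yields
$$0\le\big\langle\beta(b-a)+(\beta-\alpha)a,\ a-b\big\rangle=-\beta\|a-b\|^{2}+(\beta-\alpha)\langle a,\ a-b\rangle.$$
By the Cauchy--Schwarz inequality, $\beta\|a-b\|^{2}\le(\beta-\alpha)\langle a,a-b\rangle\le|\beta-\alpha|\,\|a\|\,\|a-b\|$, whence
$$\|x_{\epsilon(t)}-x_{\epsilon(s)}\|\le\frac{|\epsilon(t)-\epsilon(s)|}{\epsilon(s)}\,\|x_{\epsilon(t)}\|\qquad\mbox{for all }s,t\ge t_0.$$
The point of writing $-\alpha a+\beta b=\beta(b-a)+(\beta-\alpha)a$ rather than isolating $\alpha(b-a)$ is that it puts $\epsilon(s)$ in the denominator, which will converge to $\epsilon(t)$ in the limit below.

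Next I would upgrade this estimate to local Lipschitz continuity of $t\mapsto x_{\epsilon(t)}$. On any compact interval $[t_0,T]$ the function $\epsilon$, being of class $\mathcal{C}^{1}$ and nonincreasing, is Lipschitz with some constant $L_T$ and bounded below by $\epsilon(T)>0$; combined with the uniform bound $\|x_{\epsilon(t)}\|\le\|x^{*}\|$ from Proposition \ref{solu}, the displayed estimate gives $\|x_{\epsilon(t)}-x_{\epsilon(s)}\|\le\frac{L_T\|x^{*}\|}{\epsilon(T)}|t-s|$ for all $s,t\in[t_0,T]$. Hence $t\mapsto x_{\epsilon(t)}$ is locally Lipschitz and therefore differentiable at almost every $t\ge t_0$ (Hilbert spaces have the Radon--Nikodym property). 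At such a point, dividing the displayed estimate by $|t-s|$ and letting $s\to t$, the left-hand side tends to $\big\|\frac{d}{dt}x_{\epsilon(t)}\big\|$ while $\frac{|\epsilon(t)-\epsilon(s)|}{|t-s|}\to|\dot\epsilon(t)|$ and $\epsilon(s)\to\epsilon(t)$; since $\epsilon$ is nonincreasing we have $\dot\epsilon(t)\le0$, so $|\dot\epsilon(t)|=-\dot\epsilon(t)$, and we arrive at $\big\|\frac{d}{dt}x_{\epsilon(t)}\big\|\le-\frac{\dot\epsilon(t)}{\epsilon(t)}\|x_{\epsilon(t)}\|$ for almost all $t\ge t_0$.

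I expect the only subtle point to be the passage from the quantitative estimate to differentiability almost everywhere: it relies on the uniform boundedness of $\{x_{\epsilon(t)}\}_{t\ge t_0}$ (Proposition \ref{solu}, which in turn uses $\mbox{zer}\,\mathcal{A}\ne\emptyset$) together with the local Lipschitz regularity and strict positivity of $\epsilon$ on compact intervals. Everything else — the cocoercivity/monotonicity of $\mathcal{A}_{\eta}$, the Cauchy--Schwarz step, and the final limit — is routine. Incidentally, should one wish not to invoke Proposition \ref{solu}, the required local boundedness of $t\mapsto\|x_{\epsilon(t)}\|$ can be read off the estimate itself: swapping the roles of $s$ and $t$ gives $\|x_{\epsilon(s)}\|\big(1-|\epsilon(t)-\epsilon(s)|/\epsilon(t)\big)\le\|x_{\epsilon(t)}\|$, and for $s$ close to $t$ the factor in parentheses exceeds $\tfrac12$.
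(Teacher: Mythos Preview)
Your proof is correct and takes a genuinely different route from the paper's. The paper argues \emph{differentially}: it cites an external reference for the almost everywhere differentiability of $t\mapsto x_{\epsilon(t)}$, then applies Proposition~\ref{der} to the curve $t\mapsto x_{\epsilon(t)}$ to get $\big\langle \tfrac{d}{dt}x_{\epsilon(t)},\tfrac{d}{dt}\mathcal{A}_{\eta}x_{\epsilon(t)}\big\rangle\ge 0$, substitutes $\mathcal{A}_{\eta}x_{\epsilon(t)}=-\epsilon(t)x_{\epsilon(t)}$, and reads off $-\dot\epsilon(t)\langle x_{\epsilon(t)},\tfrac{d}{dt}x_{\epsilon(t)}\rangle\ge\epsilon(t)\|\tfrac{d}{dt}x_{\epsilon(t)}\|^{2}$, from which the bound follows by Cauchy--Schwarz. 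You instead argue \emph{at the level of finite differences}: the monotonicity of $\mathcal{A}_{\eta}$ between two time instants is exactly the discrete analogue of Proposition~\ref{der}, and your estimate $\|x_{\epsilon(t)}-x_{\epsilon(s)}\|\le\tfrac{|\epsilon(t)-\epsilon(s)|}{\epsilon(s)}\|x_{\epsilon(t)}\|$ yields the differential inequality in the limit. The payoff of your approach is that it is self-contained: the local Lipschitz bound gives you the a.e.\ differentiability for free via the Radon--Nikodym property, without appealing to an outside lemma, and your closing remark even shows how to dispense with Proposition~\ref{solu}. The paper's version is shorter but leans on more machinery.
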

\begin{proof}
The differentiability of $x_{\epsilon(t)}$ can refer to  \cite[Lemma 6]{Bot2024}. By Proposition \ref{der},
$$\big\langle \frac{d}{dt}x_{\epsilon(t)}, \frac{d}{dt}\mathcal{A}_{\eta}x_{\epsilon(t)}\big\rangle\geq 0.$$
Since $\mathcal{A}_{\eta}x_{\epsilon(t)}+\epsilon(t)x_{\epsilon(t)}=0$, we get 
$$\big\langle \frac{d}{dt}x_{\epsilon(t)}, -\frac{d}{dt}\left(\epsilon(t)x_{\epsilon(t)}\right)\big\rangle\geq 0.$$
Therefore, 
$$-\dot{\epsilon}(t)\bigg\langle x_{\epsilon(t)},~ \frac{d}{dt}x_{\epsilon(t)}\bigg \rangle \geq \epsilon(t)\big\|\frac{d}{dt}x_{\epsilon(t)}\big\|^2,$$ 
which completes the proof.
\end{proof}

\begin{lemma} (\cite[Proposition 6.2.1]{Haraux})\label{uniq}
Let $\mathbb{X}$ be a Banach space and $f:[t_0,+\infty) \times \mathbb{X}\rightarrow \mathbb{X}$	be a function. Suppose $f$ satisfies the following property:
\begin{itemize}
\item[(i)] $f(t,\cdot):\mathbb{X}\rightarrow\mathbb{X}$ is continuous and 
$$\|f(t,x)-f(t,y)\|\leq M(t,\|x\|+\|y\|)\|x-y\|,~~\forall x,y \in \mathbb{X}$$
for almost all $t\in [t_0,+\infty)$, where $M(t,r)\in L_{loc}^{1}([t_0,+\infty)),\forall r\in (0,+\infty)$;
\item[(ii)] For every $x\in \mathbb{X}$, $f(t,x)\in L_{loc}^{1}([t_0,+\infty))$;
\item[(iii)] $f(t,\cdot): \mathbb{X}\rightarrow \mathbb{X}$ satisfies
$$\|f(t,x)\|\leq P(t)(1+\|x\|)~~\mbox{and}~~P(t)\in L_{loc}^{1}([t_0,+\infty))$$
for almost all $t\in [t_0,+\infty)$.
\end{itemize}
Then, for
$$\frac{d}{dt}x(t)=f(t,x(t)),~~~~x(t_0)=x_0,$$
there exists a unique global trajectory $x: [t_0, +\infty)\rightarrow \mathbb{X}$. 
\end{lemma}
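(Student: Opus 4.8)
The plan is to recast the Cauchy problem $\dot x(t)=f(t,x(t))$, $x(t_0)=x_0$, as the integral fixed-point equation $x(t)=x_0+\int_{t_0}^{t}f(s,x(s))\,ds$ on spaces of continuous (equivalently, locally absolutely continuous) $\mathbb{X}$-valued functions, and then to combine a local Banach contraction argument with an a priori Gronwall estimate to push the local solution out to all of $[t_0,+\infty)$. Since the hypotheses (i)--(iii) are of Carath\'eodory type ($M(\cdot,r),P\in L^1_{loc}$ rather than continuous), the recurring device is the absolute continuity of the Lebesgue integral: for $g\in L^1_{loc}([t_0,+\infty))$ one has $\int_{\tau_1}^{\tau_2}|g(s)|\,ds\to 0$ as $\tau_2-\tau_1\to 0^+$. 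This is what replaces the ``$\sup$ of a continuous right-hand side'' bounds of the textbook Picard--Lindel\"of proof.

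\emph{Local existence and uniqueness.} For $R>0$ and $\tau>0$, consider $\mathcal{B}=\{x\in C([t_0,t_0+\tau];\mathbb{X}):\|x(t)-x_0\|\le R\}$, a closed subset of a Banach space, and the operator $(Tx)(t)=x_0+\int_{t_0}^{t}f(s,x(s))\,ds$. For $x\in\mathcal{B}$ the map $s\mapsto f(s,x(s))$ is measurable (continuity of $f(t,\cdot)$ from (i) plus measurability of $f(\cdot,x)$ from (ii), approximating $x$ by simple functions) and lies in $L^1$ on $[t_0,t_0+\tau]$ by (iii), so $Tx$ is well defined and continuous. Put $r:=2\|x_0\|+2R$. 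By (iii), $\|(Tx)(t)-x_0\|\le(1+\|x_0\|+R)\int_{t_0}^{t_0+\tau}P(s)\,ds\le R$ for $\tau$ small; by (i), since $\|x(s)\|+\|y(s)\|\le r$ on $\mathcal{B}$ (and $M(s,\cdot)$ may be taken nondecreasing), $\sup_t\|(Tx)(t)-(Ty)(t)\|\le\big(\int_{t_0}^{t_0+\tau}M(s,r)\,ds\big)\sup_t\|x(t)-y(t)\|$, a strict contraction after shrinking $\tau$ (or after switching to the weighted norm $\sup_t e^{-\int_{t_0}^{t}M(s,r)\,ds}\|x(t)\|$, which makes $T$ contractive on a fixed interval). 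Banach's theorem gives a unique fixed point, i.e.\ a unique local solution; the same argument at any initial time $t_1$ and value $x_1$ produces a local solution there.

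\emph{A priori bound and continuation.} If $x$ solves the problem on $[t_0,T]$, then $\|x(t)\|\le\|x_0\|+\int_{t_0}^{t}P(s)(1+\|x(s)\|)\,ds$, so Gronwall's inequality yields $\|x(t)\|\le\big(\|x_0\|+\int_{t_0}^{T}P(s)\,ds\big)\exp\big(\int_{t_0}^{T}P(s)\,ds\big)$, a finite bound depending only on $T$. Let $[t_0,T_{\max})$ be the maximal interval of existence and suppose $T_{\max}<+\infty$; then $\|x(t)\|\le C$ there, whence $\|x(\tau_2)-x(\tau_1)\|\le(1+C)\int_{\tau_1}^{\tau_2}P(s)\,ds\to0$ as $\tau_1,\tau_2\uparrow T_{\max}$, so $x(t)$ converges to some $x_1\in\mathbb{X}$ at $T_{\max}$. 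Applying the local result at $(T_{\max},x_1)$ extends the solution strictly past $T_{\max}$, contradicting maximality; hence $T_{\max}=+\infty$ and the solution is global.

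\emph{Global uniqueness and the main difficulty.} If $x,y$ are two global solutions, the set on which they coincide is nonempty ($t_0$ belongs to it), closed (continuity), and open (the local uniqueness above, applied at any coincidence time), hence all of $[t_0,+\infty)$; alternatively, on each $[t_0,T]$ both are bounded by some $r$ and $\|x(t)-y(t)\|\le\int_{t_0}^{t}M(s,r)\|x(s)-y(s)\|\,ds$ forces $x\equiv y$ by Gronwall. The only genuinely delicate point is making the contraction constant $\int_{t_0}^{t_0+\tau}M(s,r)\,ds$ strictly below $1$ — precisely why (i) is stated with $M(\cdot,r)\in L^1_{loc}$ for each fixed $r$ and why the second slot of $M$ may grow with $\|x\|+\|y\|$: one first confines the iterates to a ball to fix $r$, and only then invokes absolute continuity of $\int P$ and $\int M(\cdot,r)$ to choose $\tau$. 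I expect this bookkeeping, rather than any conceptual step, to be the main obstacle in a fully rigorous write-up.
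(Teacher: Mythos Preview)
The paper does not supply its own proof of this lemma: it is quoted verbatim as \cite[Proposition 6.2.1]{Haraux} and used as a black box in the existence theorem for the dynamical system (Theorem~\ref{theorem1}). There is therefore nothing in the paper to compare your argument against.

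That said, your proposal is the standard Carath\'eodory--Picard route and is essentially correct. The decomposition into (a) local existence via a Banach contraction on $C([t_0,t_0+\tau];\mathbb{X})$, using absolute continuity of $\int M(\cdot,r)$ and $\int P$ to make $\tau$ small, (b) an a~priori Gronwall bound from hypothesis~(iii), (c) continuation by the Cauchy-at-$T_{\max}$ argument, and (d) global uniqueness by Gronwall or connectedness, is exactly how such results are proved in the references the paper relies on. One small point worth tightening in a full write-up: the measurability of $s\mapsto f(s,x(s))$ for continuous $x$ deserves a line, since (ii) only gives measurability of $f(\cdot,x)$ for fixed $x$; the usual fix is to approximate $x$ uniformly by step functions and use continuity of $f(t,\cdot)$ from~(i) to pass to the limit, as you indicate. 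Otherwise the sketch is sound.
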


\section{Existence and uniqueness of solutions}\label{S3}

Throughout the paper, we assume that the operator $\mathcal{A}$ and the Tikhonov regularization parameter $\epsilon(t)$ satisfy the following hypotheses:\\
$(H)
\begin{cases}
\mathcal{A}: \mathcal{H}\rightarrow 2^{\mathcal{H}}~\mbox{is a maximally}~\rho\mbox{-comonotone operator with zer}\mathcal{A}\neq \emptyset;\\
\mbox{The parameters satisfy}~ \eta >\max \{ -2\rho, 0\},~ \rho\in \mathbb{R} ~\mbox{and}~\delta>0;\\
\mbox{We denote by}~x^*~\mbox{the element of minimum norm of zer}\mathcal{A};\\
\epsilon:[t_0,+\infty)\rightarrow (0,+\infty)~\mbox{is a nonincreasing function}, ~\lim_{t\rightarrow +\infty}\epsilon(t)=0~\\
 \mbox{and}~ \ddot{\epsilon}(t)~ \mbox{exists}.
\end{cases}$

In this section, we show the existence and uniqueness of strong global solutions to the system $(\ref{DS})$. For the sake of clarity, first we state the definition of a strong global solution.
\begin{definition} 
We say that $x:[t_0,+\infty)\rightarrow \mathcal{H}$ is a strong global solution of the system $(\ref{DS})$ with Cauchy data $(x_0,x_1)\in \mathcal{H}\times \mathcal{H}$ if 
\begin{itemize}
\item [(i)] $x$, $\dot{x}:[t_0,+\infty)\rightarrow \mathcal{H}$ are locally absolutely continuous, in other words, absolutely continuous on each interval $[t_0, T]$ for $t_0<T<+\infty$;
\item [(ii)] $\ddot{x}(t)+\delta\sqrt{\epsilon(t)}\dot{x}(t)+\frac{1}{\gamma\sqrt{\epsilon(t)}}\frac{d}{dt}\big(\mathcal{A}_{\eta}x(t)+\epsilon(t)x(t)\big)+\mathcal{A}_{\eta}x(t)+\epsilon(t)x(t)=0$ for almost every $t\in [t_0,+\infty)$;
\item [(iii)] $x(t_0)=x_0$, $\dot{x}(t_0)=x_1$.
\end{itemize}
\end{definition}

\begin{theorem}\label{theorem1}
Under $(H)$, take $t_0>0$. Then, for any $x_0\in \mathcal{H}$, $x_1 \in \mathcal{H}$, there exists a unique strong global solution $x: [t_0, +\infty) \rightarrow \mathcal{H}$ of the dynamical system $(\ref{DS})$ which satisfies the Cauchy data $x(t_0)=x_0$ and $\dot{x}(t_0)=x_1$.
\end{theorem}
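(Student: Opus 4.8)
The plan is to recast the second-order system \eqref{DS} as a first-order system in a product space and then invoke the Cauchy--Lipschitz--Picard-type existence result stated in Lemma~\ref{uniq}. The one subtlety is that \eqref{DS} contains the term $\frac{d}{dt}(\mathcal{A}_\eta x(t)+\epsilon(t)x(t))$, which a priori involves $\dot x(t)$ through the chain rule applied to the non-smooth map $\mathcal{A}_\eta$. To handle this cleanly I would introduce the auxiliary variable
$$ y(t) := \dot x(t) + \frac{1}{\gamma\sqrt{\epsilon(t)}}\big(\mathcal{A}_\eta x(t)+\epsilon(t)x(t)\big), $$
so that differentiating $y$ and substituting \eqref{DS} removes the troublesome $\frac{d}{dt}\mathcal{A}_\eta x(t)$ term and leaves an expression in $x$, $y$ and the (scalar, smooth) functions $\epsilon(t)$, $\sqrt{\epsilon(t)}$ and their derivatives. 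Concretely, set $Z(t)=(x(t),y(t))$; then \eqref{DS} is equivalent to $\dot Z(t)=F(t,Z(t))$ with
$$ F(t,(x,y)) = \Big( y - \tfrac{1}{\gamma\sqrt{\epsilon(t)}}\big(\mathcal{A}_\eta x+\epsilon(t)x\big),\ -\delta\sqrt{\epsilon(t)}\,y + \big(\tfrac{\delta}{\gamma}+\tfrac{\dot\epsilon(t)}{2\gamma\epsilon(t)^{3/2}}\big)\big(\mathcal{A}_\eta x+\epsilon(t)x\big) - \big(\mathcal{A}_\eta x+\epsilon(t)x\big)\Big), $$
up to a routine recomputation of the coefficient of $\mathcal{A}_\eta x+\epsilon(t)x$ in the second component (this is where the hypothesis that $\ddot\epsilon$ exists, hence $\dot\epsilon$ is continuous, gets used, together with $\epsilon(t)>0$ on $[t_0,+\infty)$ so that $\sqrt{\epsilon(t)}$ and $1/\sqrt{\epsilon(t)}$ are continuous).

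Next I would verify the three hypotheses of Lemma~\ref{uniq} for $F$ on $\mathbb{X}=\mathcal{H}\times\mathcal{H}$ (with the norm $\|(x,y)\|^2=\|x\|^2+\|y\|^2$, say). For (i): by Proposition~\ref{z}(iii), $\mathcal{A}_\eta$ is $\frac{1}{\rho+\eta}$-Lipschitz (globally, with $\text{Dom}\,\mathcal{A}_\eta=\mathcal{H}$), so $x\mapsto \mathcal{A}_\eta x + \epsilon(t)x$ is Lipschitz with constant $\frac{1}{\rho+\eta}+\epsilon(t)$; multiplying by the continuous scalar coefficients appearing in $F$ and using that these coefficients are bounded on each compact $[t_0,T]$, one gets $\|F(t,Z_1)-F(t,Z_2)\|\le M(t)\|Z_1-Z_2\|$ with $M(t)$ continuous, hence in $L^1_{loc}$; note the Lipschitz bound here is in fact independent of $\|Z_1\|+\|Z_2\|$, which is stronger than what Lemma~\ref{uniq}(i) demands. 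For (ii): for fixed $Z$, $t\mapsto F(t,Z)$ is continuous on $[t_0,+\infty)$ (again using continuity of $\epsilon$, $\dot\epsilon$, $\sqrt{\epsilon}$, $1/\sqrt{\epsilon}$), so it is $L^1_{loc}$. For (iii): since $\mathcal{A}_\eta$ is Lipschitz, $\|\mathcal{A}_\eta x\|\le \|\mathcal{A}_\eta x - \mathcal{A}_\eta 0\| + \|\mathcal{A}_\eta 0\| \le \frac{1}{\rho+\eta}\|x\| + \|\mathcal{A}_\eta 0\|$, so $\|F(t,(x,y))\|\le P(t)(1+\|x\|+\|y\|)$ with $P(t)$ a continuous (hence $L^1_{loc}$) function built from the coefficients. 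Lemma~\ref{uniq} then yields a unique global $Z\in\mathcal{C}^1([t_0,+\infty);\mathcal{H}\times\mathcal{H})$ solving $\dot Z = F(t,Z)$ with $Z(t_0)=(x_0, x_1+\frac{1}{\gamma\sqrt{\epsilon(t_0)}}(\mathcal{A}_\eta x_0+\epsilon(t_0)x_0))$.

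Finally I would translate this back: writing $Z=(x,y)$, the first component equation gives $\dot x(t)=y(t)-\frac{1}{\gamma\sqrt{\epsilon(t)}}(\mathcal{A}_\eta x(t)+\epsilon(t)x(t))$, so $x\in\mathcal{C}^1$ and $\dot x(t_0)=x_1$; moreover $\dot x$ is the sum of a $\mathcal{C}^1$ function and $t\mapsto -\frac{1}{\gamma\sqrt{\epsilon(t)}}(\mathcal{A}_\eta x(t)+\epsilon(t)x(t))$, which is locally absolutely continuous because $t\mapsto \mathcal{A}_\eta x(t)$ is locally Lipschitz (composition of the Lipschitz map $\mathcal{A}_\eta$ with the $\mathcal{C}^1$ curve $x$) and $1/\sqrt{\epsilon}$, $\epsilon$ are $\mathcal{C}^1$; hence $\dot x$ is locally absolutely continuous and $\ddot x$ exists a.e. Differentiating the relation for $\dot x$ and substituting $\dot y = $ (second component of $F$) recovers \eqref{DS} a.e., so $x$ is a strong global solution with the prescribed Cauchy data; uniqueness for \eqref{DS} follows from uniqueness for $\dot Z = F(t,Z)$ since the change of variables $(x,\dot x)\leftrightarrow(x,y)$ is a bijection. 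The only genuine obstacle is the bookkeeping in deriving the correct autonomous-in-$Z$ form of $F$ — in particular getting the coefficient of $\mathcal{A}_\eta x+\epsilon(t)x$ right after eliminating $\frac{d}{dt}\mathcal{A}_\eta x(t)$ — and confirming that $\dot x$ (not just $x$) inherits local absolute continuity; both are routine once the regularity of $\epsilon$ and the global Lipschitzness of $\mathcal{A}_\eta$ are in hand.
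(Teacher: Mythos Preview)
Your proposal is correct and follows essentially the same route as the paper: introduce the auxiliary variable $y=\dot x+\frac{1}{\gamma\sqrt{\epsilon}}(\mathcal{A}_\eta x+\epsilon x)$ (the paper uses its negative), rewrite \eqref{DS} as $\dot Z=F(t,Z)$ on $\mathcal{H}\times\mathcal{H}$, and verify the three hypotheses of Lemma~\ref{uniq} using the global $\frac{1}{\rho+\eta}$-Lipschitzness of $\mathcal{A}_\eta$ and the continuity of $\epsilon,\dot\epsilon,\epsilon^{\pm 1/2}$. The only cosmetic differences are that the paper bounds $\|\mathcal{A}_\eta x\|$ via a point $z\in\text{zer}\,\mathcal{A}$ (so $\mathcal{A}_\eta z=0$) rather than via $\mathcal{A}_\eta 0$, and you spell out the local absolute continuity of $\dot x$ more carefully than the paper does; note also that the sign in your displayed coefficient should be $-\frac{\dot\epsilon}{2\gamma\epsilon^{3/2}}$, consistent with your own caveat.
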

\begin{proof}
First notice that the system $(\ref{DS})$ can be rewritten as

\begin{equation*}
\left\{
\begin{array}{lcl}
\dot{x}(t)=-y(t)-\frac{1}{\gamma \sqrt{\epsilon(t)}}\big(\mathcal{A}_{\eta}x(t)+\epsilon(t)x(t)\big)\\
\dot{y}(t)=-\delta\sqrt{\epsilon(t)}y(t)+\big[\frac{\gamma-\delta}{\gamma}-\frac{1}{\gamma}\frac{d}{dt}(\frac{1}{\sqrt{\epsilon(t)}})\big]\big(\mathcal{A}_{\eta}x(t)+\epsilon(t)x(t)\big).
\end{array}
\right.
\end{equation*}	
Let $Z(t):=\big(x(t),y(t)\big)$ and $F:[t_0,+\infty)\times \mathcal{H}\times\mathcal{H}\rightarrow \mathcal{H}\times\mathcal{H}$ is defined by
$$F(t,x,y):=\bigg(-\frac{1}{\gamma\sqrt{\epsilon(t)}}\big(\mathcal{A}_{\eta}x+\epsilon(t)x\big)-y,-\delta\sqrt{\epsilon(t)}y+[\frac{\gamma-\delta}{\gamma}-\frac{1}{\gamma}\frac{d}{dt}(\frac{1}{\sqrt{\epsilon(t)}})]\big(\mathcal{A}_{\eta}x+\epsilon(t)x)\big)\bigg).$$
Then, the system $(\ref{DS})$  can rewritten as the following first-order dynamical system in the phase space $\mathcal{H}\times\mathcal{H}$ with the Cauchy data $x(t_0)=x_0$ and $y(t_0)=y_0:=-x_1-\frac{1}{\gamma\sqrt{\epsilon(t_0)}}(\mathcal{A}_{\eta}x_0+\epsilon(t_0)x_0)$
\begin{equation*}
\left\{
\begin{array}{lcl}
\dot{Z}(t)=F\big(t,Z(t)\big)\\
Z(t_0)=(x_0,y_0).
\end{array}
\right.
\end{equation*}	

We endow $\mathcal{H}\times \mathcal{H}$ with scalar product $\langle (x,y),(\bar{x},\bar{y})\rangle_{\mathcal{H}\times\mathcal{H}}=\langle x,\bar{x}\rangle+\langle y,\bar{y}\rangle$ and corresponding norm $\|(x,y)\|_{\mathcal{H}\times \mathcal{H}}=\sqrt{\|x\|^2+\|y\|^2}$. Let us write shortly $\kappa(t):=\frac{\gamma-\delta}{\gamma}-\frac{1}{\gamma}\frac{d}{dt}(\frac{1}{\sqrt{\epsilon(t)}})$.

{\it Step 1:} For arbitrary $(x,y),~(\bar{x},\bar{y})\in \mathcal{H}\times \mathcal{H}$, we get
\begin{eqnarray*}
&&\|F(t,x,y)-F(t,\bar{x},\bar{y}\|^2_{\mathcal{H}\times \mathcal{H}}\\
&=&\|\frac{1}{\sqrt{\epsilon(t)}}(\mathcal{A}_{\eta}x+\epsilon(t)x-\mathcal{A}_{\eta}\bar{x}-\epsilon(t)\bar{x})+(y-\bar{y})\|^2\\
&&+\|-\delta\sqrt{\epsilon(t)}(y-\bar{y})+\kappa(t)(\mathcal{A}_{\eta}x+\epsilon(t)x-\mathcal{A}_{\eta}\bar{x}-\epsilon(t)\bar{x})\|^2\\
&\leq&(2+2{\delta}^2\epsilon(t))\|y-\bar{y}\|^2+(\frac{2}{\epsilon(t)}+2{\kappa}^2(t))\|\mathcal{A}_{\eta}x+\epsilon(t)x-\mathcal{A}_{\eta}\bar{x}-\epsilon(t)\bar{x}\|^2\\
&\leq&(2+2{\delta}^2\epsilon(t))\|y-\bar{y}\|^2+(\frac{4}{\epsilon(t)}+4{\kappa}^2(t))\|\mathcal{A}_{\eta}x-\mathcal{A}_{\eta}\bar{x}\|^2\\
&&+(4\epsilon(t)+4{\epsilon}^2(t){\kappa}^2(t))\|x-\bar{x}\|^2\\
&\leq&(2+2{\delta}^2\epsilon(t))\|y-\bar{y}\|^2+\big[(\frac{4}{\epsilon(t)}+4{\kappa}^2(t))(\rho+\eta)^{-2}+4\epsilon(t)+4{\epsilon}^2(t){\kappa}^2(t)\big]\|x-\bar{x}\|^2\\
&\leq&\big[ 2+2{\delta}^2\epsilon(t)+(\frac{4}{\epsilon(t)}+4{\kappa}^2(t))(\rho+\eta)^{-2}+4\epsilon(t)+4{\epsilon}^2(t){\kappa}^2(t)\big]\|(x,y)-(\bar{x},\bar{y})\|^2_{\mathcal{H}\times \mathcal{H}}.
\end{eqnarray*}
Denote $N^2(t):=2+2{\delta}^2\epsilon(t)+(\frac{4}{\epsilon(t)}+4{\kappa}^2(t))(\rho+\eta)^{-2}+4\epsilon(t)+4{\epsilon}^2(t){\kappa}^2(t)$ and $N(t)>0$.
Then,
$$\|F(t,x,y)-F(t,\bar{x},\bar{y})\|_{\mathcal{H}\times \mathcal{H}}\leq N(t)\|(x,y)-(\bar{x},\bar{y})\|_{\mathcal{H}\times \mathcal{H}}.$$
Hence $F(t,\cdot,\cdot)$ is $N(t)-$Lipschitz continuous for every $t\geq t_0$.
Moreover, for any $t\geq t_0$, by the continuity of $\epsilon(t)$ and $\kappa(t)$, we know that $N(\cdot)$ is integrable on $[t_0,T]$ for any $t_0<T<+\infty$. Thus $N(t)\in L_{loc}^{1}([t_0,+\infty)$

{\it Step 2:} For each  $x,y$,  $F(t,x,y)\in L_{loc}^{1}([t_0,+\infty)$ follows from the continuity  of  $\epsilon(t)$ and  the existence of  $\ddot{\epsilon}(t)$.

{\it Step 3:} For arbitrary $z\in \mbox{zer}\mathcal{A}$ and fixed $x,y\in \mathcal{H}$, we have 
\begin{eqnarray*}
&&\|F(t,x,y)\|^2_{\mathcal{H}\times \mathcal{H}}\\
&=& \|y+\frac{1}{\gamma\sqrt{\epsilon(t)}}(\mathcal{A}_{\eta}x+\epsilon(t)x)\|^2+\|-\delta\sqrt{\epsilon(t)}y+\kappa(t)(\mathcal{A}_{\eta}x+\epsilon(t)x)\|^2\\
&=&\|y+\frac{1}{\gamma\sqrt{\epsilon(t)}}(\mathcal{A}_{\eta}x-\mathcal{A}_{\eta}z+\epsilon(t)x)\|^2+\|-\delta\sqrt{\epsilon(t)}y+\kappa(t)(\mathcal{A}_{\eta}x-\mathcal{A}_{\eta}z+\epsilon(t)x)\|^2\\
&\leq &(2+{\delta}^2\epsilon(t))\|y\|^2+(\frac{4}{{\gamma}^2\epsilon(t)}+4{\kappa}^2(t))\big(\|\mathcal{A}_{\eta}x-\mathcal{A}_{\eta}z\|^2+{\epsilon}^2(t)\|x\|^2\big)\\
&\leq & (2+{\delta}^2\epsilon(t))\|y\|^2+(\frac{4}{{\gamma}^2\epsilon(t)}+4{\kappa}^2(t))\frac{1}{(\rho+\eta)^2}\|x-z\|^2+(\frac{4\epsilon(t)}{{\gamma}^2}+4{\epsilon}^2(t){\kappa}^2(t))\|x\|^2\\
&\leq & (2+{\delta}^2\epsilon(t))\|y\|^2+(\frac{8}{{\gamma}^2\epsilon(t)}+8{\kappa}^2(t))\frac{1}{(\rho+\eta)^2}(\|x\|^2+\|z\|^2)\\
&&+(\frac{4\epsilon(t)}{{\gamma}^2}+4{\epsilon}^2(t){\kappa}^2(t))\|x\|^2\\
&\leq& (\frac{8}{{\gamma}^2\epsilon(t)}+8{\kappa}^2(t))\frac{1}{(\rho+\eta)^2}\|z\|^2\\
&&+[(2+{\delta}^2\epsilon(t)+(\frac{8}{{\gamma}^2\epsilon(t)}+8{\kappa}^2(t))\frac{1}{(\rho+\eta)^2}+\frac{4\epsilon(t)}{{\gamma}^2}+4{\epsilon}^2(t){\kappa}^2(t)]\|(x,y)\|^2_{\mathcal{H}\times \mathcal{H}}\\
&\leq& P(t)(1+\|(x,y)\|^2_{\mathcal{H}\times \mathcal{H}}),
\end{eqnarray*}
where 
$$P(t)= (\frac{8}{{\gamma}^2\epsilon(t)}+8{\kappa}^2(t))(1+\frac{1}{(\rho+\eta)^2})\|z\|^2+2+{\delta}^2\epsilon(t)+(\frac{8}{{\gamma}^2\epsilon(t)}+8{\kappa}^2(t))\frac{1}{(\rho+\eta)^2}+\frac{4\epsilon(t)}{{\gamma}^2}$$
$$+4{\epsilon}^2(t){\kappa}^2(t).$$ 
By virtue of the continuity of $\epsilon(t)$, $P(t)\in L_{loc}^{1}([t_0,+\infty)$. 
By the  Cauchy-Lipschitz-Picard Theorem (Lemma \ref{uniq}),  $\dot{Z}(t)=F\big(t,Z(t)\big)$, equivalently, $(\ref{DS})$,  with Cauchy data, admits a unique strong global solution.
\end{proof}

\section{Convergence analysis}\label{S4}

To discuss the convergence properties of the system  $\eqref{DS}$, we consider the energy function $E: [t_0,+\infty) \rightarrow [0, +\infty)$ defined by
\begin{eqnarray}\label{LE}
E(t):=\frac{1}{2}\big\|\gamma\sqrt{\epsilon(t)}(x(t)-x_{\epsilon(t)})+\dot{x}(t)\big\|^2+\langle \mathcal{A}_\eta x(t)+\epsilon(t)x(t), x(t)-x_{\epsilon(t)}\rangle,	
\end{eqnarray}
where $x(t)$ is the trajectory generated by  $\eqref{DS}$  and  $x_{\epsilon(t)}=\big( \mathcal{A}_\eta+\epsilon(t)Id\big)^{-1}(0)$. 

\begin{theorem}\label{Th4.1}
Under $(H)$, let $x:[t_0,+\infty)\rightarrow \mathcal{H}$ be a solution trajectory of the system $(\ref{DS})$ and $E(t)$ be defined in $(\ref{LE})$. Then for any $t\geq t_0$,	
\begin{eqnarray}\label{x2}
\|x(t)-x_{\epsilon(t)}\|^2\leq \frac{E(t)}{\epsilon(t)},
\end{eqnarray}
$$\|\dot{x}(t)\|^2\leq(4+2{\gamma}^2)E(t),$$
and
$$\|\mathcal{A}_\eta x(t)+\epsilon(t)x_{\epsilon(t)}\|^2\leq \frac{E(t)}{\rho+\eta}.$$ 
Therefore, the trajectory $x(t)$ converges strongly to $x^*$ as soon as $\lim_{t\rightarrow +\infty}\frac{E(t)}{\epsilon(t)}=0$, where $x^*$ is the  minimum norm element of $\text{zero}  \mathcal{A}$.
\end{theorem}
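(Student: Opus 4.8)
\emph{Proof sketch.} The plan is to exploit the purely algebraic structure of the energy functional $E(t)$ in $(\ref{LE})$; the dynamics $(\ref{DS})$ play no role in this statement and will only be needed afterwards to control $E(t)$ itself. Abbreviate $d(t):=x(t)-x_{\epsilon(t)}$ and $g(t):=\mathcal{A}_\eta x(t)+\epsilon(t)x(t)$, so that
\[
E(t)=\frac12\big\|\gamma\sqrt{\epsilon(t)}\,d(t)+\dot x(t)\big\|^2+\langle g(t),\,d(t)\rangle .
\]
Since $x_{\epsilon(t)}$ is the zero of $\mathcal{A}_\eta+\epsilon(t)Id$, we have $\mathcal{A}_\eta x_{\epsilon(t)}=-\epsilon(t)x_{\epsilon(t)}$, hence $g(t)=\big(\mathcal{A}_\eta x(t)-\mathcal{A}_\eta x_{\epsilon(t)}\big)+\epsilon(t)d(t)$, and therefore
\[
\langle g(t),\,d(t)\rangle=\big\langle \mathcal{A}_\eta x(t)-\mathcal{A}_\eta x_{\epsilon(t)},\,d(t)\big\rangle+\epsilon(t)\|d(t)\|^2 .
\]
By Proposition \ref{z}(ii) the operator $\mathcal{A}_\eta$ is $(\rho+\eta)$-cocoercive (and $\rho+\eta>0$ under $(H)$), so the first term on the right is at least $(\rho+\eta)\|\mathcal{A}_\eta x(t)-\mathcal{A}_\eta x_{\epsilon(t)}\|^2\ge 0$. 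Consequently $\langle g(t),d(t)\rangle\ge \epsilon(t)\|d(t)\|^2\ge 0$, and since the squared term in $E(t)$ is nonnegative, $E(t)\ge 0$ for every $t\ge t_0$.

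Next I would read off the three estimates from these two lower bounds for $\langle g(t),d(t)\rangle$. Dropping the squared term gives $E(t)\ge\langle g(t),d(t)\rangle\ge\epsilon(t)\|d(t)\|^2$, which is exactly $(\ref{x2})$. Keeping instead the cocoercivity bound gives $E(t)\ge(\rho+\eta)\|\mathcal{A}_\eta x(t)-\mathcal{A}_\eta x_{\epsilon(t)}\|^2$, and since $\mathcal{A}_\eta x_{\epsilon(t)}=-\epsilon(t)x_{\epsilon(t)}$ the bracket equals $\mathcal{A}_\eta x(t)+\epsilon(t)x_{\epsilon(t)}$, which is the third inequality. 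For the velocity bound, drop the inner product term to obtain $\frac12\|\gamma\sqrt{\epsilon(t)}d(t)+\dot x(t)\|^2\le E(t)$, write $\dot x(t)=\big(\gamma\sqrt{\epsilon(t)}d(t)+\dot x(t)\big)-\gamma\sqrt{\epsilon(t)}d(t)$, and apply $\|a-b\|^2\le 2\|a\|^2+2\|b\|^2$ together with $(\ref{x2})$ in the form $\epsilon(t)\|d(t)\|^2\le E(t)$; this gives $\|\dot x(t)\|^2\le 4E(t)+2\gamma^2\epsilon(t)\|d(t)\|^2\le(4+2\gamma^2)E(t)$.

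For the convergence conclusion: if $\lim_{t\to+\infty}E(t)/\epsilon(t)=0$, then $(\ref{x2})$ forces $\|x(t)-x_{\epsilon(t)}\|\to 0$. Since $\lim_{t\to+\infty}\epsilon(t)=0$ by $(H)$, Proposition \ref{solu} yields $x_{\epsilon(t)}\to x^*$ strongly, where $x^*$ is the minimum norm element of $\mbox{zer}\,\mathcal{A}$. The triangle inequality $\|x(t)-x^*\|\le\|x(t)-x_{\epsilon(t)}\|+\|x_{\epsilon(t)}-x^*\|$ then gives $x(t)\to x^*$ strongly.

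I do not expect a genuine obstacle in this particular statement: everything reduces to the algebra of $E(t)$ and the cocoercivity of $\mathcal{A}_\eta$. The only points needing care are (a) using the defining identity $\mathcal{A}_\eta x_{\epsilon(t)}+\epsilon(t)x_{\epsilon(t)}=0$ to re-express the inner product term, and (b) choosing the splitting of $\dot x(t)$ so that the constant comes out as the stated $4+2\gamma^2$. The substantive difficulty is deferred to verifying $E(t)/\epsilon(t)\to 0$ under concrete choices of $\delta$, $\gamma$ and $\epsilon(\cdot)$, where the dynamics $(\ref{DS})$, Proposition \ref{der} and Lemma \ref{ineq} will enter.
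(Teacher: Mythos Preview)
Your proof is correct and follows essentially the same approach as the paper: both arguments drop the squared term in $E(t)$ and use the $(\rho+\eta)$-cocoercivity of $\mathcal{A}_\eta$ together with the identity $\mathcal{A}_\eta x_{\epsilon(t)}=-\epsilon(t)x_{\epsilon(t)}$ to extract the first and third estimates, then use the same splitting $\dot x(t)=\big(\gamma\sqrt{\epsilon(t)}d(t)+\dot x(t)\big)-\gamma\sqrt{\epsilon(t)}d(t)$ with $\|a-b\|^2\le 2\|a\|^2+2\|b\|^2$ and $(\ref{x2})$ for the velocity bound. The only cosmetic difference is that the paper phrases the lower bound $\langle g(t),d(t)\rangle\ge\epsilon(t)\|d(t)\|^2$ as $\epsilon(t)$-strong monotonicity of $\mathcal{A}_\eta+\epsilon(t)Id$, whereas you derive it directly from cocoercivity; these are equivalent.
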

\begin{proof}
Since  $\mathcal{A}_\eta+\epsilon(t)Id$ is $\epsilon(t)-$strong monotone,
 $$\langle \mathcal{A}_\eta x(t)+\epsilon(t)x(t), x(t)-x_{\epsilon(t)}\rangle\geq \epsilon(t)\|x(t)-x_{\epsilon(t)}\|^2.$$ 
In virtue of $(\ref{LE})$, we obtain 
$$E(t)\geq\langle \mathcal{A}_\eta x(t)+\epsilon(t)x(t), x(t)-x_{\epsilon(t)}\rangle\geq \epsilon(t)\|x(t)-x_{\epsilon(t)}\|^2,$$
which yields $(\ref{x2})$.

Using again $(\ref{LE})$, we get
$$E(t)\geq \frac{1}{2}\big\|\gamma\sqrt{\epsilon(t)}(x(t)-x_{\epsilon(t)})+\dot{x}(t)\big\|^2.$$
By combining this inequality with $(\ref{x2})$, we have
\begin{eqnarray*}
\|\dot{x}(t)\|^2&=&\|\gamma\sqrt{\epsilon(t)}(x(t)-x_{\epsilon(t)})+\dot{x}(t)-	\gamma\sqrt{\epsilon(t)}(x(t)-x_{\epsilon(t)})\|^2\nonumber\\
&\leq& 2\|\gamma\sqrt{\epsilon(t)}(x(t)-x_{\epsilon(t)})+\dot{x}(t)\|^2+2\|\gamma\sqrt{\epsilon(t)}(x(t)-x_{\epsilon(t)})\|^2\nonumber\\
&\leq&4E(t)+2{\gamma}^2\epsilon(t)\|x(t)-x_{\epsilon(t)}\|^2\nonumber\\
&\leq&(4+2{\gamma}^2)E(t).
\end{eqnarray*} 
It follows from the $(\rho+\eta)-$cocoerciveness of $\mathcal{A}_\eta$  that
\begin{eqnarray}\label{Ac}
&&\big\langle \mathcal{A}_\eta x(t)+\epsilon(t)x(t), x(t)-x_{\epsilon(t)}\big\rangle\nonumber\\
&=&\big\langle \mathcal{A}_\eta x(t)+ \epsilon(t)x(t)-\epsilon(t)x_{\epsilon(t)}+ \epsilon(t)x_{\epsilon(t)}, x(t)-x_{\epsilon(t)}\big\rangle\nonumber\\
&=&\big\langle \mathcal{A}_\eta x(t)-\mathcal{A}_\eta x_{\epsilon(t)},x(t)-x_{\epsilon(t)}\big\rangle+\epsilon(t)\|x(t)-x_{\epsilon(t)}\|^2\nonumber\\
&\geq& (\rho+\eta)\|\mathcal{A}_\eta x(t)+\epsilon(t)x_{\epsilon(t)}\|^2+\epsilon(t)\|x(t)-x_{\epsilon(t)}\|^2\\
&\geq & (\rho+\eta)\|\mathcal{A}_\eta x(t)+\epsilon(t)x_{\epsilon(t)}\|^2\nonumber,
\end{eqnarray}
where the second equality is from $\mathcal{A}_\eta x_{\epsilon(t)}+\epsilon(t)x_{\epsilon(t)}=0$.
Combining $(\ref{LE})$ and \eqref{Ac}, we find
 \begin{eqnarray*}
 E(t)\geq \big\langle \mathcal{A}_\eta x(t)+\epsilon(t)x(t), x(t)-x_{\epsilon(t)}\big\rangle\geq(\rho+\eta)\|\mathcal{A}_\eta x(t)+\epsilon(t)x_{\epsilon(t)}\|^2,
 \end{eqnarray*}
 which yields the desired estimate.
 \end{proof}

\begin{theorem}\label{Th4.2}
Under $(H)$, let $x:[t_0,+\infty)\rightarrow \mathcal{H}$ be a solution trajectory of the system $(\ref{DS})$ and $E(t)$ be defined in $(\ref{LE})$. Let us assume that 
$$\gamma<\delta <\gamma+\frac{\gamma}{\frac{1}{2}{\gamma}^2+1} \text{ and }\lim_{t\rightarrow+\infty}\frac{d}{dt}\big( \frac{1}{\sqrt{\epsilon(t)}}\big)=0.$$ 
Then there exists $t_1\geq t_0$ such that for all $t\geq t_1$, it holds that
$$E(t)\leq \frac{1}{a}\|x^*\|^2\frac{\int_{t_1}^{t}\big[\epsilon^{-\frac{5}{2}}(s){\dot{\epsilon}}^2(s)-\dot{\epsilon}(s)\big]\omega(s)ds}{\omega(t)}+\frac{\omega(t_1)E(t_1)}{\omega(t)},$$
where $a\in (0, 2(2\gamma-\delta)(\rho+\eta))$, $\omega(t)=\exp\big(\int_{t_1}^{t}\mu(s)ds\big )$ and 
$$\mu(t)={\epsilon}^{\frac{1}{2}}(t)[(1-\frac{2}{{\gamma}^2})\frac{d}{dt}(\frac{1}{\sqrt{\epsilon(t)}})+\delta-\gamma].$$	
\end{theorem}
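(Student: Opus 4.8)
The plan is to differentiate the energy function $E(t)$ along the trajectory of \eqref{DS} and establish a differential inequality of the form $\dot{E}(t) + \mu(t) E(t) \le b(t)\|x^*\|^2$, after which the conclusion follows by multiplying through by the integrating factor $\omega(t) = \exp(\int_{t_1}^t \mu(s)\,ds)$ and integrating from $t_1$ to $t$. Specifically, $\frac{d}{dt}(\omega E) = \omega(\dot E + \mu E) \le \omega(t)b(t)\|x^*\|^2$, which upon integration gives the stated bound with $b(s) = \frac{1}{a}[\epsilon^{-5/2}(s)\dot\epsilon^2(s) - \dot\epsilon(s)]$ (recall $\dot\epsilon \le 0$ so $-\dot\epsilon \ge 0$). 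So the real content is the derivation of the differential inequality.

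First I would compute $\dot E(t)$ term by term. Writing $v(t) := \gamma\sqrt{\epsilon(t)}(x(t)-x_{\epsilon(t)}) + \dot x(t)$ for the velocity-like term and using \eqref{DS} to substitute for $\ddot x(t)$, the derivative of the first half of $E$ produces $\langle v(t), \dot v(t)\rangle$; here $\dot v$ involves $\ddot x$, the derivative of $\gamma\sqrt{\epsilon(t)}$, and $\frac{d}{dt}x_{\epsilon(t)}$. For the second half, $\frac{d}{dt}\langle \mathcal{A}_\eta x + \epsilon x, x - x_{\epsilon(t)}\rangle$ splits into $\langle \frac{d}{dt}(\mathcal{A}_\eta x + \epsilon x), x - x_{\epsilon(t)}\rangle + \langle \mathcal{A}_\eta x + \epsilon x, \dot x - \frac{d}{dt}x_{\epsilon(t)}\rangle$. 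The key structural cancellation: the term $\frac{1}{\gamma\sqrt{\epsilon(t)}}\frac{d}{dt}(\mathcal{A}_\eta x + \epsilon x)$ appearing in \eqref{DS} should pair against the $\langle v, \cdot\rangle$ contribution in just the right way so that the dangerous $\frac{d}{dt}(\mathcal{A}_\eta x + \epsilon x)$ terms combine into something controllable. I expect that after substitution one obtains, modulo lower-order terms, an expression of the shape
\begin{eqnarray*}
\dot E(t) &\le& -(2\gamma - \delta)\sqrt{\epsilon(t)}\,\langle \mathcal{A}_\eta x(t) + \epsilon(t)x(t),\, x(t) - x_{\epsilon(t)}\rangle \\
&& {}+ \mu_0(t)\|v(t)\|^2 + (\text{terms in } \tfrac{d}{dt}x_{\epsilon(t)}),
\end{eqnarray*}
where $\mu_0(t)$ collects the contributions from $\frac{d}{dt}(\gamma\sqrt\epsilon)$ and the $\frac{\gamma-\delta}{\gamma}$-type coefficients; combining this with $\langle \mathcal{A}_\eta x + \epsilon x, x - x_{\epsilon(t)}\rangle \ge (\rho+\eta)\|\mathcal{A}_\eta x + \epsilon x_{\epsilon(t)}\|^2 + \epsilon\|x - x_{\epsilon(t)}\|^2$ from \eqref{Ac} and with $E(t) \le \|v\|^2/2 + \langle \cdots\rangle$, one can absorb everything into $-\mu(t)E(t)$ plus the $x_{\epsilon(t)}$-remainder.

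To control the remainder terms involving $\frac{d}{dt}x_{\epsilon(t)}$, I would invoke Lemma \ref{ineq}, which gives $\|\frac{d}{dt}x_{\epsilon(t)}\| \le -\frac{\dot\epsilon(t)}{\epsilon(t)}\|x_{\epsilon(t)}\|$, together with Proposition \ref{solu} giving $\|x_{\epsilon(t)}\| \le \|x^*\|$; applying Cauchy–Schwarz and Young's inequality to each cross term with $\frac{d}{dt}x_{\epsilon(t)}$ produces terms bounded by a multiple of $\frac{\dot\epsilon^2(t)}{\epsilon^2(t)}\|x^*\|^2$ times appropriate powers of $\epsilon$ (this is where the $\epsilon^{-5/2}\dot\epsilon^2$ factor arises, from $\sqrt\epsilon \cdot \frac{\dot\epsilon^2}{\epsilon^2}$) plus small multiples of $\|v\|^2$ or $\|x - x_{\epsilon(t)}\|^2$ that get reabsorbed; the $-\dot\epsilon$ term comes from differentiating $\epsilon(t)$ inside the inner product $\langle \mathcal{A}_\eta x + \epsilon x, \dot x\rangle$ type contributions. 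Choosing $a < 2(2\gamma - \delta)(\rho+\eta)$ leaves strictly positive slack $2(2\gamma-\delta)(\rho+\eta) - a > 0$ in the coercive term to soak up these perturbations, and the hypotheses $\gamma < \delta < \gamma + \frac{\gamma}{\frac12\gamma^2+1}$ together with $\frac{d}{dt}(1/\sqrt{\epsilon(t)}) \to 0$ are exactly what guarantee that for $t \ge t_1$ large enough the coefficient $\mu(t) = \epsilon^{1/2}(t)[(1-\frac{2}{\gamma^2})\frac{d}{dt}(1/\sqrt\epsilon) + \delta - \gamma]$ has the right sign and the sign conditions on all intermediate coefficients hold. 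The main obstacle will be the bookkeeping in the differentiation of $E$: correctly tracking the interaction between the Hessian-driven damping term $\frac{1}{\gamma\sqrt\epsilon}\frac{d}{dt}(\mathcal{A}_\eta x + \epsilon x)$ and the $\gamma\sqrt\epsilon(x - x_{\epsilon(t)})$ part of $v(t)$, using Proposition \ref{der} (which gives $\langle \dot x, \frac{d}{dt}\mathcal{A}_\eta x\rangle \ge 0$) to discard the indefinite monotone-cocoercivity term, and verifying that the leftover coefficient of $\|v(t)\|^2$ is precisely $\mu(t)$ after the reabsorptions — getting the constant $1 - \frac{2}{\gamma^2}$ exactly right is the delicate point and is what forces the particular upper bound on $\delta$.
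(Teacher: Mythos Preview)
Your plan is essentially the paper's proof: differentiate $E$, add $\mu(t)E(t)$, invoke Proposition~\ref{der} to drop $\langle\dot x,\frac{d}{dt}\mathcal{A}_\eta x\rangle$, use the cocoercivity estimate \eqref{Ac}, apply Young's inequality to the cross terms involving $\frac{d}{dt}x_{\epsilon(t)}$, bound those via Lemma~\ref{ineq} and Proposition~\ref{solu}, and integrate with the factor $\omega$. One clarification on the mechanism: $\mu(t)$ does not emerge as a coefficient of $\|v(t)\|^2$; rather, after expanding $\dot E$ you get separate $\|x-x_{\epsilon(t)}\|^2$, $\|\dot x\|^2$, and $\langle x-x_{\epsilon(t)},\dot x\rangle$ terms, and $\mu(t)$ is chosen exactly so that the indefinite cross term $\langle x-x_{\epsilon(t)},\dot x\rangle$ cancels in $\dot E+\mu E$ --- the upper bound $\delta<\gamma+\gamma/(\tfrac12\gamma^2+1)$ then arises from forcing the resulting coefficient of $\|x(t)-x_{\epsilon(t)}\|^2$ to be nonpositive.
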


\begin{proof}

We start with computing the derivative of $E(t)$ in two parts. On  one hand, by the classical derivation chain rule and the system $(\ref{DS})$, we arrive at
\begin{eqnarray}\label{yin1}
&&\frac{d}{dt}\frac{1}{2}\big\|\gamma\sqrt{\epsilon(t)}(x(t)-x_{\epsilon(t)})+\dot{x}(t)\big\|^2\nonumber\\
&=&\Big\langle \gamma\sqrt{\epsilon(t)}\big(x(t)-x_{\epsilon(t)}\big)+\dot{x}(t),~\frac{1}{2}\gamma\epsilon^{-\frac{1}{2}}(t)\dot{\epsilon}(t)\big(x(t)-x_{\epsilon(t)}\big)+\gamma\sqrt{\epsilon(t)}\big(\dot{x}(t)-\frac{d}{dt}x_{\epsilon(t)}\big)\nonumber\\
&&-\delta \sqrt{\epsilon(t)}\dot{x}(t)-\frac{1}{\gamma\sqrt{\epsilon(t)}}\frac{d}{dt}\big(\mathcal{A}_{\eta}x(t)+\epsilon(t)x(t)\big)-\big(\mathcal{A}_\eta x(t)+\epsilon(t)x(t)\big )\Big\rangle\nonumber\\
&=& \frac{1}{2}{\gamma}^2\dot{\epsilon}(t)\big\|x(t)-x_{\epsilon(t)}\big\|^2+ \gamma(\gamma-\delta)\epsilon(t)\big\langle x(t)-x_{\epsilon(t)},\dot{x}(t)\big\rangle - {\gamma}^2\epsilon(t)\big\langle x(t)-x_{\epsilon(t)},\frac{d}{dt}x_{\epsilon(t)}\big\rangle\nonumber\\
&& -\big\langle  x(t)-x_{\epsilon(t)}, \frac{d}{dt}\big(\mathcal{A}_\eta x(t)+\epsilon(t)x(t)\big)\big\rangle-\gamma{\epsilon}^{\frac{1}{2}}(t)\big\langle  x(t)-x_{\epsilon(t)},\mathcal{A}_\eta x(t)+ \epsilon(t)x(t)\big\rangle\nonumber\\
&&+\frac{1}{2}\gamma\epsilon^{-\frac{1}{2}}(t)\dot{\epsilon}(t)\big\langle x(t)-x_{\epsilon(t)}, \dot{x}(t)\big\rangle+(\gamma-\delta){\epsilon}^{\frac{1}{2}}(t)\|\dot{x}(t)\|^2-\gamma{\epsilon}^{\frac{1}{2}}(t)\big\langle \dot{x}(t), \frac{d}{dt}x_{\epsilon(t)}\big\rangle\nonumber\\
&&-\frac{1}{\gamma}{\epsilon}^{-\frac{1}{2}}(t)\big\langle \frac{d}{dt}\big(\mathcal{A}_\eta x(t)+\epsilon(t)x(t)\big), \dot{x}(t)\big\rangle -\big\langle \mathcal{A}_\eta x(t)+\epsilon(t)x(t), \dot{x}(t)\big\rangle.
\end{eqnarray}
Notice that the penultimate term of $(\ref{yin1})$ can be estimated as
\begin{eqnarray*}
&&\big\langle \frac{d}{dt}(\mathcal{A}_\eta x(t)+\epsilon(t)x(t)), \dot{x}(t)\big\rangle\nonumber\\
&=&\big\langle \frac{d}{dt}	\mathcal{A}_\eta x(t), \dot{x}(t)\big\rangle+\dot{\epsilon}(t)\big\langle x(t)-x_{\epsilon(t)}, \dot{x}(t)\big\rangle +\dot{\epsilon}(t)\big\langle x_{\epsilon(t)}, \dot{x}(t)\big\rangle+\epsilon(t)\|\dot{x}(t)\|^2\nonumber\\
&\geq & \dot{\epsilon}(t)\big\langle x(t)-x_{\epsilon(t)}, \dot{x}(t)\big\rangle +\dot{\epsilon}(t)\big\langle x_{\epsilon(t)}, \dot{x}(t)\big\rangle+\epsilon(t)\|\dot{x}(t)\|^2,
\end{eqnarray*}
where the inequality is from $\big\langle \frac{d}{dt}	\mathcal{A}_\eta x(t), \dot{x}(t)\big\rangle\geq 0$ in virtue of Proposition \ref{der}. Plugging the above inequality into $(\ref{yin1})$ yields
\begin{eqnarray*}
&&\frac{d}{dt}\frac{1}{2}\big\|\gamma\sqrt{\epsilon(t)}(x(t)-x_{\epsilon(t)})+\dot{x}(t)\big\|^2\\
&\leq &\frac{1}{2}{\gamma}^2\dot{\epsilon}(t)\|x(t)-x_{\epsilon(t)}\|^2+ \epsilon(t)\big[{\gamma}^2-\gamma\delta+(\frac{2}{\gamma}-\gamma)\frac{d}{dt}(\frac{1}{\sqrt{\epsilon(t)}})\big]\big\langle x(t)-x_{\epsilon(t)},\dot{x}(t)\big\rangle\nonumber\\
&& -{\gamma}^2\epsilon(t)\big\langle x(t)-x_{\epsilon(t)},\frac{d}{dt}x_{\epsilon(t)}\big\rangle-\big\langle x(t)-x_{\epsilon(t)}, \frac{d}{dt}\big(\mathcal{A}_\eta x(t)+ \epsilon(t)x(t)\big)\big\rangle\\
&&-\gamma{\epsilon}^{\frac{1}{2}}(t)\big\langle  x(t)-x_{\epsilon(t)},\mathcal{A}_\eta x(t)+ \epsilon(t)x(t)\big\rangle+(\gamma-\delta-\frac{1}{\gamma}){\epsilon}^{\frac{1}{2}}(t)\|\dot{x}(t)\|^2\\
&&-\gamma{\epsilon}^{\frac{1}{2}}(t)\big\langle \dot{x}(t), \frac{d}{dt}x_{\epsilon(t)}\big\rangle-\frac{1}{\gamma}\epsilon^{-\frac{1}{2}}(t)\dot{\epsilon}(t)\big\langle x_{\epsilon(t)}, \dot{x}(t)\big\rangle-\big\langle \dot{x}(t),\mathcal{A}_\eta x(t)+ \epsilon(t)x(t)\big\rangle.
\end{eqnarray*}

On the other hand, let us calculate
\begin{eqnarray*}
&&\frac{d}{dt}\Big\langle \mathcal{A}_\eta x(t)+\epsilon(t)x(t), x(t)-x_{\epsilon(t)}\Big\rangle\\
&=&\big\langle \frac{d}{dt}\big(\mathcal{A}_\eta x(t)+\epsilon(t)x(t)\big), x(t)-x_{\epsilon(t)}\big\rangle+\big\langle \mathcal{A}_\eta x(t)+\epsilon(t)x(t)), \dot{x}(t)\big\rangle\\
&&-\big\langle \mathcal{A}_\eta x(t)+\epsilon(t)x(t),\frac{d}{dt}x_{\epsilon(t)}\big\rangle.
\end{eqnarray*}
Combining the above two relations, we notice that the terms $\big\langle \frac{d}{dt}\big(\mathcal{A}_\eta x(t)+\epsilon(t)x(t)\big), x(t)-x_{\epsilon(t)}\big\rangle$ and $\big\langle \mathcal{A}_\eta x(t)+\epsilon(t)x(t), \dot{x}(t)\big\rangle$ cancel each other out, which gives
\begin{eqnarray}\label{d1}
\frac{d}{dt}E(t)&=&\frac{d}{dt}\frac{1}{2}\big\|\gamma\sqrt{\epsilon(t)}\big(x(t)-x_{\epsilon(t)}\big)+\dot{x}(t)\big\|^2+\frac{d}{dt}\big\langle \mathcal{A}_\eta x(t)+\epsilon(t)x(t), x(t)-x_{\epsilon(t)}\big\rangle\nonumber\\
&\leq&\frac{1}{2}{\gamma}^2\dot{\epsilon}(t)\big\|x(t)-x_{\epsilon(t)}\big\|^2+ \gamma\epsilon(t)\big[\gamma-\delta+(\frac{2}{{\gamma}^2}-1)\frac{d}{dt}(\frac{1}{\sqrt{\epsilon(t)}})\big]\big\langle x(t)-x_{\epsilon(t)},\dot{x}(t)\big\rangle\nonumber\\
&& -{\gamma}^2\epsilon(t)\big\langle x(t)-x_{\epsilon(t)},\frac{d}{dt}x_{\epsilon(t)}\big\rangle-\gamma{\epsilon}^{\frac{1}{2}}(t)\big\langle  x(t)-x_{\epsilon(t)},\mathcal{A}_\eta x(t)+ \epsilon(t)x(t)\big\rangle\nonumber\\
&&+(\gamma-\delta-\frac{1}{\gamma}){\epsilon}^{\frac{1}{2}}(t)\|\dot{x}(t)\|^2-\gamma{\epsilon}^{\frac{1}{2}}(t)\big\langle \dot{x}(t), \frac{d}{dt}x_{\epsilon(t)}\big\rangle-\frac{1}{\gamma}\epsilon^{-\frac{1}{2}}(t)\dot{\epsilon}(t)\big\langle x_{\epsilon(t)}, \dot{x}(t)\big\rangle \nonumber\\
&& -\big\langle \mathcal{A}_\eta x(t)+\epsilon(t)x(t), \frac{d}{dt}x_{\epsilon(t)}\big\rangle.
\end{eqnarray}
Returning to the expansion for $E(t)$ by $(\ref{LE})$, one has 
\begin{eqnarray*}
E(t)&=&\frac{1}{2}{\gamma}^2\epsilon(t)\|x(t)-x_{\epsilon(t)}\|^2+\frac{1}{2}\|\dot{x}(t)\|^2+\gamma{\epsilon}^{\frac{1}{2}}(t)\big\langle x(t)-x_{\epsilon(t)}, \dot{x}(t)\big\rangle\\
&&+\big\langle \mathcal{A}_\eta x(t)+\epsilon(t)x(t), x(t)-x_{\epsilon(t)}\big\rangle.
\end{eqnarray*}
\par
Let us now consider
\begin{eqnarray}\label{ut}
\mu(t)={\epsilon}^{\frac{1}{2}}(t)[(1-\frac{2}{{\gamma}^2})\frac{d}{dt}(\frac{1}{\sqrt{\epsilon(t)}})+\delta-\gamma].	
\end{eqnarray}
Since $\lim_{t\rightarrow +\infty}\frac{d}{dt}(\frac{1}{\sqrt{\epsilon(t)}})=0$ and $\delta>\gamma$, we get $\mu(t)>0$ for all large enough $t$. Multiplying $E(t)$ with $\mu(t)$ we immediately infer that
\begin{eqnarray}\label{d2}
&&\mu(t)E(t)\nonumber\\
&=&{\epsilon}^{\frac{3}{2}}(t)\big[(\frac{{\gamma}^2}{2}-1)\frac{d}{dt}(\frac{1}{\sqrt{\epsilon(t)}})+\frac{1}{2}{\gamma}^2(\delta-\gamma)\big]\|x(t)-x_{\epsilon(t)}\|^2\nonumber\\
&&+{\epsilon}^{\frac{1}{2}}(t)\big[(\frac{1}{2}-\frac{1}{{\gamma}^2})\frac{d}{dt}(\frac{1}{\sqrt{\epsilon(t)}})+\frac{\delta-\gamma}{2}\big]\|\dot{x}(t)\|^2\nonumber\\
&&+\gamma\epsilon(t)\big[(1-\frac{2}{{\gamma}^2})\frac{d}{dt}(\frac{1}{\sqrt{\epsilon(t)}})+\delta-\gamma\big]\big\langle 	x(t)-x_{\epsilon(t)}, \dot{x}(t)\big\rangle\nonumber\\
&&+{\epsilon}^{\frac{1}{2}}(t)\big[(1-\frac{2}{{\gamma}^2})\frac{d}{dt}(\frac{1}{\sqrt{\epsilon(t)}})+\delta-\gamma\big]\big\langle \mathcal{A}_\eta x(t)+\epsilon(t)x(t), x(t)-x_{\epsilon(t)}\big\rangle.
\end{eqnarray}
Further, by adding $(\ref{d1})$ and $(\ref{d2})$, we derive
\begin{eqnarray}\label{eq1}
&&\frac{d}{dt}E(t)+\mu(t)E(t)\nonumber\\
&\leq&{\epsilon}^{\frac{3}{2}}(t)\big[(-1-\frac{{\gamma}^2}{2})\frac{d}{dt}(\frac{1}{\sqrt{\epsilon(t)}})+\frac{1}{2}{\gamma}^2(\delta-\gamma)\big]\|x(t)-x_{\epsilon(t)}\|^2\nonumber\\
&&+{\epsilon}^{\frac{1}{2}}(t)\big[(\frac{1}{2}-\frac{1}{{\gamma}^2})\frac{d}{dt}(\frac{1}{\sqrt{\epsilon(t)}})+\frac{\gamma-\delta}{2}-\frac{1}{\gamma}\big]\|\dot{x}(t)\|^2\nonumber\\
&&+{\epsilon}^{\frac{1}{2}}(t)\big[(1-\frac{2}{{\gamma}^2})\frac{d}{dt}(\frac{1}{\sqrt{\epsilon(t)}})+\delta-2\gamma\big]\big\langle \mathcal{A}_\eta x(t)+\epsilon(t)x(t), x(t)-x_{\epsilon(t)}\big\rangle\nonumber\\
&&-{\gamma}^2\epsilon(t)\big\langle x(t)-x_{\epsilon(t)}, \frac{d}{dt}x_{\epsilon(t)}\big\rangle-\gamma\epsilon^{\frac{1}{2}}(t)\big\langle \dot{x}(t), \frac{d}{dt}x_{\epsilon(t)}\big\rangle\nonumber\\
&&-\frac{1}{\gamma}\epsilon^{-\frac{1}{2}}(t)\dot{\epsilon}(t)\big\langle x_{\epsilon(t)}, \dot{x}(t)\rangle-\big\langle \mathcal{A}_\eta x(t)+\epsilon(t)x(t), \frac{d}{dt}x_{\epsilon(t)}\big\rangle.
\end{eqnarray}

Next, we will estimate the inner product terms in $(\ref{eq1})$. Since the assumptions\\
 $\lim_{t\rightarrow +\infty}\frac{d}{dt}\big(\frac{1}{\sqrt{\epsilon(t)}} \big)=0$ and $\delta<\gamma+\frac{\gamma}{\frac{1}{2}{\gamma}^2+1}$, we immediately deduce that 
$$(1-\frac{2}{{\gamma}^2})\frac{d}{dt}(\frac{1}{\sqrt{\epsilon(t)}})+\delta-2\gamma< 0$$ 
for all large enough $t$. It follows from  \eqref{Ac} that
\begin{eqnarray*}
&&{\epsilon}^{\frac{1}{2}}(t)\big[(1-\frac{2}{{\gamma}^2})\frac{d}{dt}(\frac{1}{\sqrt{\epsilon(t)}})+\delta-2\gamma\big]\langle \mathcal{A}_\eta x(t)+\epsilon(t)x(t), x(t)-x_{\epsilon(t)}\big\rangle\\
&\leq &{\epsilon}^{\frac{1}{2}}(t)\big[(1-\frac{2}{{\gamma}^2})\frac{d}{dt}(\frac{1}{\sqrt{\epsilon(t)}})+\delta-2\gamma\big](\rho+\eta)\|\mathcal{A}_\eta x(t)+\epsilon(t)x_{\epsilon(t)}\|^2\\
&&+{\epsilon(t)}^{\frac{3}{2}}\big[(1-\frac{2}{{\gamma}^2})\frac{d}{dt}(\frac{1}{\sqrt{\epsilon(t)}})+\delta-2\gamma\big]\|x(t)-x_{\epsilon(t)}\|^2.
\end{eqnarray*}

On the other hand, using the Cauchy-Schwarz inequality we get for $\forall ~a,~b,~c>0$,
$$-{\gamma}^2\epsilon(t) \big\langle x(t)-x_{\epsilon(t)}, \frac{d}{dt}x_{\epsilon(t)}\big\rangle\leq \frac{{\gamma}^2}{2}\epsilon(t)\Big(b{\epsilon}^{\frac{1}{2}}(t)\| x(t)-x_{\epsilon(t)}\|^2+\frac{1}{b}{\epsilon}^{-\frac{1}{2}}(t)\|\frac{d}{dt}x_{\epsilon(t)}\|^2\Big),$$
$$-\gamma{\epsilon}^{\frac{1}{2}}(t)\big\langle \dot{x}(t), \frac{d}{dt}x_{\epsilon(t)}\big\rangle\leq \frac{\gamma{\epsilon}^{\frac{1}{2}}(t)}{2}\Big(\frac{2}{{\gamma}^2}\|\dot{x}(t)\|^2+ \frac{{\gamma}^2}{2}\|\frac{d}{dt}x_{\epsilon(t)}\|^2\Big),$$
$$-\frac{1}{\gamma}\epsilon^{-\frac{1}{2}}(t)\dot{\epsilon}(t)\big \langle  x_{\epsilon(t)}, \dot{x}(t)\big\rangle\leq -\frac{1}{2\gamma}\epsilon^{-\frac{1}{2}}(t)\dot{\epsilon}(t)\Big(\frac{a}{2\gamma}{\epsilon}^{-\frac{1}{2}}(t)\|\dot{x}(t)\|^2+\frac{2\gamma}{a}{\epsilon}^{\frac{1}{2}}(t)\|x_{\epsilon(t)}\|^2\Big)$$
and
\begin{eqnarray*}
&&-\big\langle \mathcal{A}_\eta x(t)+\epsilon(t)x(t), \frac{d}{dt}x_{\epsilon(t)}\big\rangle\\
&=&-\big\langle \mathcal{A}_\eta x(t)+\epsilon(t)x_{\epsilon(t)}+\epsilon(t)x(t)-\epsilon(t)x_{\epsilon(t)}, \frac{d}{dt}x_{\epsilon(t)}\big\rangle\\
&=&-\big\langle \mathcal{A}_\eta x(t)+\epsilon(t)x_{\epsilon(t)},\frac{d}{dt}x_{\epsilon(t)}\big\rangle-\epsilon(t)\big\langle x(t)-x_{\epsilon(t)}, \frac{d}{dt}x_{\epsilon(t)}\big\rangle\\
&\leq & \frac{1}{2}\Big(a{\epsilon}^{\frac{1}{2}}(t)\|\mathcal{A}_\eta x(t)+\epsilon(t)x_{\epsilon(t)}\|^2+\frac{1}{a}{\epsilon}^{-\frac{1}{2}}(t)\|\frac{d}{dt}x_{\epsilon(t)}\|^2\Big)\\
&&+ \frac{\epsilon(t)}{2}\Big(c{\epsilon}^{\frac{1}{2}}(t)\| x(t)-x_{\epsilon(t)}\|^2+\frac{1}{c}{\epsilon}^{-\frac{1}{2}}(t)\|\frac{d}{dt}x_{\epsilon(t)}\|^2\Big).
\end{eqnarray*}

Plugging the above inequalities into $(\ref{eq1})$ yields\\
\begin{eqnarray}\label{DE}
 &&\frac{dE(t)}{dt}+\mu(t)E(t)\nonumber\\ 
 &\leq&\epsilon^{\frac{3}{2}}(t)\big[(-\frac{{\gamma}^2}{2}-\frac{2}{{\gamma}^2})\frac{d}{dt}(\frac{1}{\sqrt{\epsilon(t)}})+\delta-2\gamma+\frac{1}{2}{\gamma}^2\delta-\frac{1}{2}{\gamma}^3+\frac{b{\gamma}^2}{2}+\frac{c}{2}\big]\|x(t)-x_{\epsilon(t)}\|^2\nonumber\\ 
 &+&\frac{1}{2}{\epsilon(t)}^{\frac{1}{2}}\big[(1+\frac{a-2}{{\gamma}^2})\frac{d}{dt}(\frac{1}{\sqrt{\epsilon(t)}})+\gamma-\delta\big]\|\dot{x}(t)\|^2\nonumber\\ 
 &+&{\epsilon(t)}^{\frac{1}{2}}\big[(1-\frac{2}{{\gamma}^2})(\rho+\eta)\frac{d}{dt}(\frac{1}{\sqrt{\epsilon(t)}})+((\delta-2\gamma)(\rho+\eta)+\frac{a}{2})\big]\|\mathcal{A}_\eta x(t)+\epsilon(t)x_{\epsilon(t)}\|^2\nonumber\\ 
 &+&\big[(\frac{{\gamma}^2}{2b}+\frac{{\gamma}^3}{4}+\frac{1}{2c}){\epsilon}^{\frac{1}{2}}(t)+\frac{1}{2a}{\epsilon}^{-\frac{1}{2}}(t)\big]\big\|\frac{d}{dt}x_{\epsilon(t)}\big\|^2-\frac{1}{a}\dot{\epsilon}(t)\|x_{\epsilon(t)}\|^2. 
 \end{eqnarray} 
 
 Let us analyze the sign of the coefficients involved in $(\ref{DE})$.
 
  $\bullet$ Since $\lim_{t\rightarrow +\infty}\frac{d}{dt}\big( \frac{1}{\sqrt{\epsilon(t)}}\big)=0$ and $\delta<\gamma+\frac{\gamma}{\frac{1}{2}{\gamma}^2+1}$, we infer 
 $$(-\frac{{\gamma}^2}{2}-\frac{2}{{\gamma}^2})\frac{d}{dt}\big(\frac{1}{\sqrt{\epsilon(t)}}\big)+\delta-2\gamma+\frac{1}{2}{\gamma}^2\delta-\frac{1}{2}{\gamma}^3+\frac{b{\gamma}^2}{2}+\frac{c}{2}\leq 0$$
for all large enough $t$,  by choosing $b$ and $c$ such that  $b{\gamma}^2+c \in (0, -2\delta+4\gamma-{\gamma}^2\delta+{\gamma}^3)$. This gives that the coefficient of $\|x(t)-x_{\epsilon(t)}\|^2$ is nonpositive for all large enough $t$.

  $\bullet$ Since $\lim_{t\rightarrow +\infty}\frac{d}{dt}\big( \frac{1}{\sqrt{\epsilon(t)}}\big)=0$ and $\delta>\gamma$, it holds
$$(1+\frac{a-2}{{\gamma}^2})\frac{d}{dt}(\frac{1}{\sqrt{\epsilon(t)}})+\gamma-\delta \leq 0$$ for all large enough $t$.
  This gives that the coefficient of $\|\dot{x}(t)\|^2$ is nonpositive for all large enough $t$.
   
 $\bullet$ Since $\lim_{t\rightarrow +\infty}\frac{d}{dt}\big( \frac{1}{\sqrt{\epsilon(t)}}\big)=0$ and $\delta<\gamma+\frac{\gamma}{\frac{1}{2}{\gamma}^2+1}<2\gamma$, we deduce 
 $$(1-\frac{2}{{\gamma}^2})(\rho+\eta)\frac{d}{dt}(\frac{1}{\sqrt{\epsilon(t)}})+(\delta-2\gamma)(\rho+\eta)+\frac{a}{2}\leq 0$$
for all large enough $t$, by choosing $a\in (0,2(2\gamma-\delta)(\rho+\eta))$. This gives that the coefficient of $\|\mathcal{A}_\eta x(t)+\epsilon(t)x_{\epsilon(t)}\|^2$ is nonpositive for all large enough $t$.

 Collecting the above estimates, it follows from $(\ref{DE})$ that for all large enough $t$,
 $$\frac{dE(t)}{dt}+\mu(t)E(t)\leq \big[(\frac{{\gamma}^2}{2b}+\frac{{\gamma}^3}{4}+\frac{1}{2c}){\epsilon}^{\frac{1}{2}}(t)+\frac{1}{2a}{\epsilon}^{-\frac{1}{2}}(t)\big]\big\|\frac{d}{dt}x_{\epsilon(t)}\big\|^2-\frac{1}{a}\dot{\epsilon}(t)\|x_{\epsilon(t)}\|^2.$$ 
 Proposition \ref{solu} and Lemma \ref{ineq} guarantee that
 $$\big\|\frac{d}{dt}x_{\epsilon(t)}\big\|^2\leq \frac{{\dot{\epsilon}}^2(t)}{{\epsilon}^2(t)}\|x_{\epsilon(t)}\|^2\leq \frac{{\dot{\epsilon}}^2(t)}{{\epsilon}^2(t)}\|x^*\|^2.$$
 Taking into account the above two relations, we derive
  $$\frac{dE(t)}{dt}+\mu(t)E(t)\leq \big[(\frac{{\gamma}^2}{2b}+\frac{{\gamma}^3}{4}+\frac{1}{2c})\epsilon^{-\frac{3}{2}}(t){\dot{\epsilon}}^2(t)+\frac{1}{2a}\epsilon^{-\frac{5}{2}}(t){\dot{\epsilon}}^2(t)-\frac{1}{a}\dot{\epsilon}(t)\big]\|x^*\|^2$$
for all large enough $t$.
 Now, using the fact that $\epsilon(t)$ is nonincreasing, we conclude that there exists $t_1\geq t_0$ such taht for all $t\geq t_1$  
  $$\frac{dE(t)}{dt}+\mu(t)E(t)\leq \frac{1}{a}\|x^*\|^2\big[\epsilon^{-\frac{5}{2}}(t){\dot{\epsilon}}^2(t)-\dot{\epsilon}(t)\big].$$
 Multiplying this inequality with $\omega(t)=\exp \big(\int_{t_1}^{t}\mu(s)ds\big)$, we deduce
 \begin{eqnarray*}
 \frac{d}{dt}\big(\omega(t)E(t)\big)&=&	\omega(t)\mu(t)E(t)+	\omega(t)\frac{d}{dt}E(t)\\
&\leq& \frac{1}{a}\|x^*\|^2\big[\epsilon^{-\frac{5}{2}}(t){\dot{\epsilon}}^2(t)-\dot{\epsilon}(t)\big]\omega(t).
 \end{eqnarray*}
Hence, by integrating this inequality on $[t_1,t]$, we obtain 
 \begin{eqnarray*}
 E(t)\leq \frac{1}{a}\|x^*\|^2\frac{\int_{t_1}^{t}\big[\epsilon^{-\frac{5}{2}}(s){\dot{\epsilon}}^2(s)-\dot{\epsilon}(s)\big]\omega(s)ds}{\omega(t)}+\frac{\omega(t_1)E(t_1)}{\omega(t)}.
  \end{eqnarray*}
\end{proof} 
 
\begin{theorem}\label{Th4}
Under $(H)$, let $x:[t_0,+\infty)\rightarrow \mathcal{H}$ be a solution trajectory of the system $(\ref{DS})$. Let us assume that  
$$\gamma<\delta <\gamma+\dfrac{\gamma}{\tfrac{{\gamma}^2}{2}+1},\quad \epsilon^{-\frac{1}{2}}(t)\ddot{\epsilon}(t)\leq -\frac{1}{4}(\delta-\gamma)\dot{\epsilon}(t),$$
 and
$$\lim_{t\rightarrow +\infty}{\epsilon}^{-2-\frac{1}{{\gamma}^2}}(t)\dot{\epsilon}(t)=0,\quad \lim_{t\rightarrow +\infty}\exp \big(\int_{t_1}^{t}\sqrt{\epsilon(s)}ds \big)=+\infty.$$
Then, $x(t)$ converges~strongly, as  $t \rightarrow +\infty$, to $x^*$, the element of minimum norm of zer$\mathcal{A}$.

\end{theorem}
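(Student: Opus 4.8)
The plan is to combine the reduction criterion of Theorem~\ref{Th4.1} with the a priori decay estimate of Theorem~\ref{Th4.2}. Theorem~\ref{Th4.1} turns the whole statement into the single scalar assertion $\lim_{t\to+\infty}E(t)/\epsilon(t)=0$: granting this, \eqref{x2} gives $\|x(t)-x_{\epsilon(t)}\|\to0$, and since $x_{\epsilon(t)}\to x^*$ by Proposition~\ref{solu} (recall $\epsilon(t)\to0$ in $(H)$), the triangle inequality gives $x(t)\to x^*$. First I would check that the hypotheses here imply those of Theorem~\ref{Th4.2}: the window for $\delta$ is identical, and $\lim_{t\to+\infty}\tfrac{d}{dt}\big(\tfrac1{\sqrt{\epsilon(t)}}\big)=0$ follows from $\lim_{t\to+\infty}\epsilon^{-2-1/\gamma^2}(t)\dot\epsilon(t)=0$ because $\tfrac{d}{dt}\big(\tfrac1{\sqrt\epsilon}\big)=-\tfrac12\epsilon^{-3/2}\dot\epsilon$ and $\epsilon^{-3/2}(t)\le\epsilon^{-2-1/\gamma^2}(t)$ once $\epsilon(t)\le1$. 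Theorem~\ref{Th4.2} then gives $t_1\ge t_0$ with, for $t\ge t_1$,
\[
E(t)\le\frac1a\|x^*\|^2\,\frac{\int_{t_1}^{t}\big[\epsilon^{-5/2}(s)\dot\epsilon^2(s)-\dot\epsilon(s)\big]\omega(s)\,ds}{\omega(t)}+\frac{\omega(t_1)E(t_1)}{\omega(t)},\qquad \omega(t)=\exp\Big(\int_{t_1}^t\mu(s)\,ds\Big),
\]
the second-order condition $\epsilon^{-1/2}\ddot\epsilon\le-\tfrac14(\delta-\gamma)\dot\epsilon$ together with $\delta>\gamma$ being used to keep $\mu>0$ on $[t_1,+\infty)$, so this is a genuine decay estimate.

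After dividing by $\epsilon(t)$ it suffices to show (a) $\epsilon(t)\omega(t)\to+\infty$ and (b) $\big(\int_{t_1}^t[\epsilon^{-5/2}\dot\epsilon^2-\dot\epsilon]\omega\big)\big/\big(\epsilon(t)\omega(t)\big)\to0$. For (a) I would make $\mu$ explicit: $\mu(t)=(\delta-\gamma)\sqrt{\epsilon(t)}-\tfrac12\big(1-\tfrac2{\gamma^2}\big)\tfrac{\dot\epsilon(t)}{\epsilon(t)}$, so
\[
\omega(t)=C\,\epsilon(t)^{-\frac12+\frac1{\gamma^2}}\exp\Big((\delta-\gamma)\!\int_{t_1}^t\!\sqrt{\epsilon(s)}\,ds\Big),\qquad \epsilon(t)\omega(t)=C\,\epsilon(t)^{\frac12+\frac1{\gamma^2}}\exp\Big((\delta-\gamma)\!\int_{t_1}^t\!\sqrt{\epsilon}\Big).
\]
Taking logarithms and writing $\ln\big(\epsilon(t)\omega(t)\big)=\ln C+\big(\int_{t_1}^t\sqrt\epsilon\big)\big[(\delta-\gamma)+(\tfrac12+\tfrac1{\gamma^2})\,\ln\epsilon(t)\big/\!\int_{t_1}^t\sqrt\epsilon\big]$, I would use the assumption $\exp(\int_{t_1}^t\sqrt\epsilon)\to+\infty$, i.e.\ $\int_{t_1}^t\sqrt\epsilon\to+\infty$, together with L'H\^opital, $\ln\epsilon(t)\big/\!\int_{t_1}^t\sqrt\epsilon\to\dot\epsilon(t)/\epsilon^{3/2}(t)=0$ (since $\epsilon^{-3/2}\dot\epsilon$ is dominated by $\epsilon^{-2-1/\gamma^2}\dot\epsilon\to0$), to conclude $\ln(\epsilon(t)\omega(t))\to+\infty$ because $\delta>\gamma$. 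This already kills the last term $\omega(t_1)E(t_1)/(\epsilon(t)\omega(t))$.

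For (b), $N(t):=\int_{t_1}^t[\epsilon^{-5/2}\dot\epsilon^2-\dot\epsilon]\omega$ is nondecreasing (both summands are $\ge0$): if it stays bounded we are done by (a); otherwise L'H\^opital gives
\[
\lim_{t\to+\infty}\frac{N(t)}{\epsilon(t)\omega(t)}=\lim_{t\to+\infty}\frac{\big[\epsilon^{-5/2}(t)\dot\epsilon^2(t)-\dot\epsilon(t)\big]\omega(t)}{\frac{d}{dt}\big(\epsilon(t)\omega(t)\big)},\qquad \frac{d}{dt}\big(\epsilon\omega\big)=\Big[(\delta-\gamma)\epsilon^{3/2}+\big(\tfrac12+\tfrac1{\gamma^2}\big)\dot\epsilon\Big]\omega,
\]
the bracket being positive for large $t$. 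Dividing top and bottom by $\epsilon^{3/2}(t)\omega(t)$ turns the numerator into $(\epsilon^{-2}\dot\epsilon)^2-\epsilon^{-3/2}\dot\epsilon\to0$ and the denominator into $(\delta-\gamma)+(\tfrac12+\tfrac1{\gamma^2})\epsilon^{-3/2}\dot\epsilon\to\delta-\gamma\ne0$, using once more that $\epsilon^{-2}\dot\epsilon$ and $\epsilon^{-3/2}\dot\epsilon$ are controlled by $\epsilon^{-2-1/\gamma^2}\dot\epsilon\to0$. Hence (b) holds, so $E(t)/\epsilon(t)\to0$ and the theorem follows.

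The \textbf{main obstacle} is not a deep idea but the reconciliation of the several interlocking growth requirements in the hypothesis --- the vanishing of $\epsilon^{-2-1/\gamma^2}\dot\epsilon$, the divergence of $\int\sqrt\epsilon$, the second-order inequality, and the window $\gamma<\delta<\gamma+\gamma/(\tfrac{\gamma^2}{2}+1)$ --- and verifying that together they force \emph{every} term produced by Theorem~\ref{Th4.2} to vanish, in particular the indeterminate ratio $N(t)/(\epsilon(t)\omega(t))$, while at the same time keeping $\mu$ positive on $[t_1,+\infty)$ so that the Gronwall bound genuinely decays.
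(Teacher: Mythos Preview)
Your proof is correct and takes a genuinely different route from the paper's. The paper uses the second-order hypothesis $\epsilon^{-1/2}(t)\ddot\epsilon(t)\le-\tfrac14(\delta-\gamma)\dot\epsilon(t)$ in an essential way: it splits the integral coming from Theorem~\ref{Th4.2} into two pieces $\Gamma(t)$ and $\Lambda(t)$ and, for each, constructs an explicit function whose $t$-derivative dominates the integrand (this is precisely where $\ddot\epsilon$ enters), arriving at the pointwise bound $E(t)\le C\big(\epsilon^{-3}(t)\dot\epsilon^2(t)-\epsilon^{-1/2}(t)\dot\epsilon(t)\big)+\bar D/\omega(t)$ and then dividing by $\epsilon(t)$. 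You instead apply L'H\^opital directly to the ratio $N(t)/(\epsilon(t)\omega(t))$, which is more elementary and---note---never actually invokes the second-order hypothesis at all (your remark that it ``keeps $\mu>0$'' is a misattribution: $\mu(t)>0$ for large $t$ already follows from $\tfrac{d}{dt}(1/\sqrt{\epsilon(t)})\to0$ and $\delta>\gamma$, as in the proof of Theorem~\ref{Th4.2}). So your argument in fact establishes the strong convergence with that hypothesis dropped. What the paper's antiderivative approach buys is explicit pointwise control of $E(t)$, which is exactly the template exploited in Section~\ref{Pc} to derive quantitative rates when $\epsilon(t)=t^{-q}$; your L'H\^opital route yields only the limit $E(t)/\epsilon(t)\to0$, not a rate.
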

\begin{proof} In view of $(\ref{ut})$,
$$\mu(t)=(1-\frac{2}{{\gamma}^2})\frac{d}{dt} \mbox{ln}\frac{1}{\sqrt{\epsilon(t)}}+(\delta-\gamma)\sqrt{\epsilon(t)},$$
from which we deduce that 
 \begin{eqnarray}\label{wt}
 \omega(t)=\exp \bigg( \int_{t_1}^{t} \mu(s)ds \bigg)=\frac{D_1}{{\epsilon}^{\frac{1}{2}-\frac{1}{{\gamma}^2}}(t)}\exp \bigg( \int_{t_1}^{t} (\delta-\gamma)\sqrt{\epsilon(s)}ds \bigg)	
 \end{eqnarray}
for some positive constant $D_1={\epsilon}^{\frac{1}{2}-\frac{1}{{\gamma}^2}}(t_1)$.

The assumptions $\lim_{t\rightarrow +\infty}{\epsilon}^{-2-\frac{1}{{\gamma}^2}}(t)\dot{\epsilon}(t)=0$ and  $\lim_{t\rightarrow +\infty}{\epsilon}(t)=0$ give 
 $$\lim_{t\rightarrow +\infty}\frac{d}{dt}(\frac{1}{\sqrt{\epsilon(t)}})=\lim_{t\rightarrow +\infty}-\frac{1}{2}{\epsilon}^{-\frac{3}{2}}(t)\dot{\epsilon}(t)=0.$$
 Obviously,  all the assumptions of Theorem \ref{Th4.2} are satisfied. As a result,
 \begin{eqnarray*}
 &&E(t)\\
 &\leq&\frac{1}{a}\|x^*\|^2\frac{\int_{t_1}^{t}\big[\epsilon^{-\frac{5}{2}}(s){\dot{\epsilon}}^2(s)-\dot{\epsilon}(s)\big]\omega(s)ds}{\omega(t)}+\frac{\omega(t_1)E(t_1)}{\omega(t)}\\
 &\overset{(\ref{wt})}{=}&\frac{D_1}{a} \|x^*\|^2\frac{\int_{t_1}^{t}\big[\big(\epsilon^{-3+\frac{1}{{\gamma}^2}}(s){\dot{\epsilon}}^2(s)-\epsilon^{-\frac{1}{2}+\frac{1}{{\gamma}^2}}(s)\dot{\epsilon}(s)\big)\exp \big( \int_{t_1}^{s}(\delta-\gamma)\sqrt{\epsilon(\tau)}d\tau\big)\big] ds}{\omega(t)}\\
 &&+\frac{\omega(t_1)E(t_1)}{\omega(t)}	.
 \end{eqnarray*}
Define $\Gamma,\Lambda:[t_1,+\infty)\rightarrow \mathcal{H}$  by
 $$\Gamma(t):=\frac{\int_{t_1}^{t}\big[\epsilon^{-3+\frac{1}{{\gamma}^2}}(s){\dot{\epsilon}}^2(s)\big]\exp \big( \int_{t_1}^{s}(\delta-\gamma)\sqrt{\epsilon(\tau)}d \tau\big)ds}{\omega(t)}$$
 and $$\Lambda(t):=\frac{\int_{t_1}^{t}\big[-\epsilon^{-\frac{1}{2}+\frac{1}{{\gamma}^2}}(s)\dot{\epsilon}(s)\big]\exp \big( \int_{t_1}^{s}(\delta-\gamma)\sqrt{\epsilon(\tau)}d\tau\big)ds}{\omega(t)}.$$
 Then,
 \begin{eqnarray}\label{rE}
 E(t)\leq \frac{D_1}{a}\|x^*\|^2(\Gamma(t)+\Lambda(t))+\frac{\omega(t_1)E(t_1)}{\omega(t)}.	
 \end{eqnarray}

 To estimate $\Gamma(t)$, we observe that 
 \begin{eqnarray*}
 &&\frac{d}{dt}\big [\epsilon^{-\frac{7}{2}+\frac{1}{{\gamma}^2}}(t){\dot{\epsilon}}^2(t)\exp \big(\int_{t_1}^{t}(\delta-\gamma)\sqrt{\epsilon(\tau)}d\tau\big) \big]\\
 &=& \big[(-\frac{7}{2}+\frac{1}{{\gamma}^2}) \epsilon^{-\frac{9}{2}+\frac{1}{{\gamma}^2}}(t){\dot{\epsilon}}^3(t)+2\epsilon^{-\frac{7}{2}+\frac{1}{{\gamma}^2}}(t)\dot{\epsilon}(t)\ddot{\epsilon}(t)+(\delta-\gamma)\epsilon^{-3+\frac{1}{{\gamma}^2}}(t){\dot{\epsilon}}^2(t)\big]\\
 && \cdot \exp \big(\int_{t_1}^{t}(\delta-\gamma)\sqrt{\epsilon(\tau)}d\tau\big)\\
 &=&\epsilon^{-3+\frac{1}{{\gamma}^2}}(t)\dot{\epsilon}(t)\big[(-\frac{7}{2}+\frac{1}{{\gamma}^2})\epsilon^{-\frac{3}{2}}(t){\dot{\epsilon}}^2(t)+(\delta-\gamma)\dot{\epsilon}(t)+2{\epsilon}^{-\frac{1}{2}}(t)\ddot{\epsilon}(t)\big]\\
  && \cdot \exp \big(\int_{t_1}^{t}(\delta-\gamma)\sqrt{\epsilon(\tau)}d\tau\big)\\
 &=&\epsilon^{-3+\frac{1}{{\gamma}^2}}(t)\dot{\epsilon}(t)\big[\dot{\epsilon}(t)\big((7-\frac{2}{{\gamma}^2})\frac{d}{dt}(\frac{1}{\sqrt{\epsilon(t)}})+\delta-\gamma\big)+2{\epsilon}^{-\frac{1}{2}}(t)\ddot{\epsilon}(t)\big]\\
 && \cdot \exp \big(\int_{t_1}^{t}(\delta-\gamma)\sqrt{\epsilon(\tau)}d\tau\big).
 \end{eqnarray*} 
 Since $\lim_{t\rightarrow +\infty}\frac{d}{dt}(\frac{1}{\sqrt{\epsilon(t)}})=0$ and $\dot{\epsilon}(t)\leq 0$, it yields
 $$\dot{\epsilon}(t)\big((7-\frac{2}{{\gamma}^2})\frac{d}{dt}(\frac{1}{\sqrt{\epsilon}(t)})+\delta-\gamma\big)\leq \frac{3}{4}(\delta-\gamma)\dot{\epsilon}(t),$$
 which, in combination with $\epsilon^{-\frac{1}{2}}(t)\ddot{\epsilon}(t)\leq -\frac{1}{4}(\delta-\gamma)\dot{\epsilon}(t)$, yields 
 $$\dot{\epsilon}(t)\big((7-\frac{2}{{\gamma}^2})\frac{d}{dt}(\frac{1}{\sqrt{\epsilon(t)}})+\delta-\gamma\big)+2{\epsilon}^{-\frac{1}{2}}(t)\ddot{\epsilon}(t)\leq \frac{1}{4}(\delta-\gamma)\dot{\epsilon}(t),$$
and from here we infer that 
\begin{eqnarray*}
&&\frac{d}{dt}\big [\epsilon^{-\frac{7}{2}+\frac{1}{{\gamma}^2}}(t){\dot{\epsilon}}^2(t))\exp \big(\int_{t_1}^{t}(\delta-\gamma)\sqrt{\epsilon(\tau)}d\tau\big) \big]\\
&\geq& \frac{1}{4}\epsilon^{-3+\frac{1}{{\gamma}^2}}(t){\dot{\epsilon}}^2(t)\exp \big(\int_{t_1}^{t}(\delta-\gamma)\sqrt{\epsilon(\tau)}d\tau\big).
\end{eqnarray*}
Using $(\ref{wt})$,  we  have 
\begin{eqnarray}\label{Eg}
\Gamma(t)&=&\frac{\int_{t_1}^{t}\big[\epsilon^{-3+\frac{1}{{\gamma}^2}}(s){\dot{\epsilon}}^2(s)\exp \big(\int_{t_1}^{s}(\delta-\gamma)\sqrt{\epsilon(\tau)}d\tau\big) \big]ds}{\omega(t)}\nonumber\\
&\leq &\frac{4\int_{t_1}^{t}\frac{d}{ds}\big[ \epsilon^{-\frac{7}{2}+\frac{1}{{\gamma}^2}}(s){\dot{\epsilon}}^2(s)\exp \big(\int_{t_1}^{s}(\delta-\gamma)\sqrt{\epsilon(\tau)}d\tau\big)\big]ds}{\omega(t)}\nonumber\\
&=&\frac{4\epsilon^{-\frac{7}{2}+\frac{1}{{\gamma}^2}}(t){\dot{\epsilon}}^2(t)\exp \big(\int_{t_1}^{t}(\delta-\gamma)\sqrt{\epsilon(\tau)}d\tau\big)}{\frac{D_1}{{\epsilon(t)}^{\frac{1}{2}-\frac{1}{{\gamma}^2}}}\exp \big( \int_{t_1}^{t} (\delta-\gamma)\sqrt{\epsilon(s)}ds \big)}+\frac{D_2}{\omega(t)}\nonumber\\
&=&\frac{4}{D_1}\epsilon^{-3}(t){\dot{\epsilon}}^2(t)+\frac{D_2}{\omega(t)}
\end{eqnarray}
for some constant $D_2=4\epsilon^{-\frac{7}{2}+\frac{1}{{\gamma}^2}}(t_1){\dot{\epsilon}}^2(t_1)$.

Notice that 
 \begin{eqnarray*}
&& \frac{d}{dt}\big[ -\epsilon^{-1+\frac{1}{{\gamma}^2}}(t)\dot{\epsilon}(t)\exp \big(\int_{t_1}^{t}(\delta-\gamma)\sqrt{\epsilon(\tau)}d\tau\big)  \big]\\
&=&\epsilon^{-\frac{1}{2}+\frac{1}{{\gamma}^2}}(t)\big[(1-\frac{1}{{\gamma}^2})\epsilon^{-\frac{3}{2}}(t){\dot{\epsilon}}^2(t)-{\epsilon}^{-\frac{1}{2}}(t)\ddot{\epsilon}(t)-(\delta-\gamma)\dot{\epsilon}(t)\big]\\
&&\cdot \exp \big(\int_{t_1}^{t}(\delta-\gamma)\sqrt{\epsilon(\tau)}d\tau\big)\\
&=&\epsilon^{-\frac{1}{2}+\frac{1}{{\gamma}^2}}(t)\bigg[-\dot{\epsilon}(t)\bigg((2-\frac{2}{{\gamma}^2})\frac{d}{dt}(\frac{1}{\sqrt{\epsilon(t)}})+\delta-\gamma\bigg)-{\epsilon}^{-\frac{1}{2}}(t)\ddot{\epsilon}(t)\bigg]\\
&&\cdot\exp \big(\int_{t_1}^{t}(\delta-\gamma)\sqrt{\epsilon(\tau)}d\tau\big).
 \end{eqnarray*}
 Using  $\lim_{t\rightarrow+\infty}\frac{d}{dt}(\frac{1}{\sqrt{\epsilon(t)}})=0$ and $\dot{\epsilon}(t)\leq 0$, we get
 $$-\dot{\epsilon}(t)\big((2-\frac{2}{{\gamma}^2})\frac{d}{dt}(\frac{1}{\sqrt{\epsilon(t)}})+\delta-\gamma\big)\geq -\frac{3}{4}(\delta-\gamma)\dot{\epsilon}(t)$$
for all large enough $t$, which, in combination with $\epsilon^{-\frac{1}{2}}(t)\ddot{\epsilon}(t)\leq -\frac{1}{4}(\delta-\gamma)\dot{\epsilon}(t)$, yields  
 $$-\dot{\epsilon}(t)\big[(2-\frac{2}{{\gamma}^2})\frac{d}{dt}(\frac{1}{\sqrt{\epsilon(t)}})+\delta-\gamma\big]-{\epsilon}^{-\frac{1}{2}}(t)\ddot{\epsilon}(t)\geq -\frac{1}{2}(\delta-\gamma)\dot{\epsilon}(t)$$
for all large enough $t$.   It follows that
 \begin{eqnarray*}
 && \frac{d}{dt}\big[ -\epsilon^{-1+\frac{1}{{\gamma}^2}}(t)\dot{\epsilon}(t)\exp \big(\int_{t_1}^{t}(\delta-\gamma)\sqrt{\epsilon(\tau)}d\tau\big)  \big]\\
 &\geq& -\frac{1}{2}(\delta-\gamma)\epsilon^{-\frac{1}{2}+\frac{1}{{\gamma}^2}}(t)\dot{\epsilon}(t)\exp \big(\int_{t_1}^{t}(\delta-\gamma)\sqrt{\epsilon(\tau)}d\tau\big)
 \end{eqnarray*}
for all large enough $t$.
Using $(\ref{wt})$, we have the following estimate for  $\Lambda(t)$: 
\begin{eqnarray}\label{El}
\Lambda(t)&=&\frac{\int_{t_1}^{t}\big[-\epsilon^{-\frac{1}{2}+\frac{1}{{\gamma}^2}}(s)\dot{\epsilon}(s)\exp \big(\int_{t_1}^{s}(\delta-\gamma)\sqrt{\epsilon(\tau)}d\tau\big)\big] ds}{\omega(t)}\nonumber\\
&\leq&\frac{2\int_{t_1}^{t}\frac{d}{ds}\big[ -\epsilon^{-1+\frac{1}{{\gamma}^2}}(s)\dot{\epsilon}(s)\exp (\int_{t_1}^{s}(\delta-\gamma)\sqrt{\epsilon(\tau)}d\tau) \big]ds}{\omega(t)}\nonumber\\
&=&\frac{-2\epsilon^{-1+\frac{1}{{\gamma}^2}}(t)\dot{\epsilon}(t)\exp (\int_{t_1}^{t}(\delta-\gamma)\sqrt{\epsilon(\tau)}d\tau \big)}{\frac{D_1}{{\epsilon(t)}^{\frac{1}{2}-\frac{1}{{\gamma}^2}}}\exp \big( \int_{t_1}^{t} (\delta-\gamma)\sqrt{\epsilon(s)}ds \big)}+\frac{D_3}{\omega(t)}\nonumber\\
&=&-\frac{2}{D_1}\epsilon^{-\frac{1}{2}}(t)\dot{\epsilon}(t)+\frac{D_3}{\omega(t)}
\end{eqnarray}
for some constant $D_3=-2\epsilon^{-1+\frac{1}{{\gamma}^2}}(t_1)\dot{\epsilon}(t_1)$.

Plugging $(\ref{Eg})$ and $(\ref{El})$ into $(\ref{rE})$ yields
$$E(t)\leq  \frac{4\|x^*\|^2}{a}\epsilon^{-3}(t){\dot{\epsilon}}^2(t)-\frac{2\|x^*\|^2}{a}\epsilon^{-\frac{1}{2}}(t)\dot{\epsilon}(t)+\frac{\bar{D}}{\omega(t)}$$ 
forall large enolugh $t$, where $\bar{D}=\omega(t_1)E(t_1)+\frac{D_1(D_2+D_3)}{a}\|x^*\|^2$ is a positive constant. By  Theorem \ref{Th4.1}, we deduce that 
$$\|x(t)-x_{\epsilon(t)}\|^2\leq \frac{E(t)}{\epsilon(t)}\leq\frac{4\|x^*\|^2}{a}\epsilon^{-4}(t){\dot{\epsilon}}^2(t)-\frac{2\|x^*\|^2}{a}\epsilon^{-\frac{3}{2}}(t)\dot{\epsilon}(t)+\frac{\bar{D}}{\epsilon(t)\omega(t)}.$$
Notice the assmptions $\lim_{t\rightarrow +\infty}\epsilon(t)=0$ and $\lim_{t\rightarrow +\infty}\epsilon^{-2-\frac{1}{{\gamma}^2}}(t)\dot{\epsilon}(t)=0$ yield 
$$\lim_{t\rightarrow +\infty}\epsilon^{-\frac{3}{2}}(t)\dot{\epsilon}(t)=0.$$
In order to prove $\lim_{t\rightarrow +\infty}\|x(t)-x_{\epsilon(t)}\|^2=0$, we just need to show $\lim_{t\rightarrow +\infty}\epsilon(t)\omega(t)=+\infty$.
Since $\lim_{t\rightarrow +\infty}\exp \big( \int_{t_1}^{t}\sqrt{\epsilon(s)}ds\big)=+\infty$ and $\lim_{t\rightarrow +\infty}\epsilon(t)=0$, we arrive at
\begin{eqnarray*}
&&\lim_{t\rightarrow +\infty}\epsilon(t)\omega(t)\\
&\overset{(\ref{wt})}{=}&\lim_{t\rightarrow +\infty}{\epsilon(t)}^{\frac{1}{2}+\frac{1}{{\gamma}^2}}\exp \big((\delta-\gamma) \int_{t_1}^{t} \sqrt{\epsilon(s)}ds\big)\\
&=&\lim_{t\rightarrow +\infty} \frac{\exp \big((\delta-\gamma) \int_{t_1}^{t} \sqrt{\epsilon(s)}ds\big)}{{\epsilon(t)}^{-\frac{1}{2}-\frac{1}{{\gamma}^2}}}\\
&\overset{L'Hopital's}{=}&\lim_{t\rightarrow +\infty} \frac{(\delta-\gamma)\exp \big((\delta-\gamma) \int_{t_1}^{t}\sqrt{\epsilon(s)}ds\big)}{(-\frac{1}{2}-\frac{1}{{\gamma}^2}){\epsilon}^{-2-\frac{1}{{\gamma}^2}}(t)\dot{\epsilon}(t)}\\
&=&+\infty,
\end{eqnarray*} 
where the last equality is from the assumption $\lim_{t\rightarrow +\infty}{\epsilon}^{-2-\frac{1}{{\gamma}^2}}(t)\dot{\epsilon}(t)=0$.
According to Proposition $\ref{solu}$, $x(t)$ converges strongly to $x^*$ as $t\rightarrow +\infty$.
\end{proof}
  
\section{Particular cases}\label{Pc}
When $\epsilon(t)=\frac{1}{t^q}$ with $0<q<1$, the system \eqref{DS} reduces to the following system
\begin{equation}\label{TRDS}
\ddot{x}(t)+\frac{\delta}{t^{\frac{q}{2}}}\dot{x}(t)+\frac{1}{\gamma}t^{\frac{q}{2}}\frac{d}{dt}\big(\mathcal{A}_{\eta}x(t)+\frac{1}{t^q}x(t)\big)+\mathcal{A}_{\eta}x(t)+\frac{1}{t^q}x(t)=0.
\end{equation}
In this section, we further establish   some convergence rate results  for the system \eqref{TRDS}.

\begin{theorem}
Under (H), let $x:[t_0, +\infty)\rightarrow \mathcal{H}$ be a solution trajectory of the system \eqref{TRDS}.
Let us assume that   $0<q<1$ and  $\gamma<\delta <\gamma+\frac{\gamma}{\frac{1}{2}{\gamma}^2+1}$. Then, the following conclusions are true:
\begin{itemize}
\item[i)] For $0<q<\frac{2}{3}$, it holds that
$$\|x(t)-x_{\epsilon(t)}\|^2=O\big(t^{\frac{q}{2}-1}\big),\quad\|\dot{x}(t)\|^2=O\big( t^{q-2}\big),$$
$$\|\mathcal{A}_\eta x(t)+\epsilon(t)x_{\epsilon(t)}\|^2=O\big( t^{q-2}\big)~\mbox{as}~t\rightarrow +\infty.$$
\item[ii)] For $\frac{2}{3} \leq q<1$, it holds that
$$\|x(t)-x_{\epsilon(t)}\|^2=O\big(t^{2q-2}\big),\quad\|\dot{x}(t)\|^2=O\big( t^{-\frac{q}{2}-1}\big),$$
$$\|\mathcal{A}_\eta x(t)+\epsilon(t)x_{\epsilon(t)}\|^2=O\big( t^{-\frac{q}{2}-1}\big)~\mbox{as}~t\rightarrow +\infty.$$
\end{itemize} 
\end{theorem}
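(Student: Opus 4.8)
The plan is to specialize the estimates of Section~\ref{S4} to $\epsilon(t)=t^{-q}$ and then carry out the resulting elementary asymptotics. First I would record $\dot\epsilon(t)=-q t^{-q-1}\le 0$, $\ddot\epsilon(t)=q(q+1)t^{-q-2}$ and $\frac{d}{dt}\bigl(\epsilon^{-1/2}(t)\bigr)=\frac q2 t^{q/2-1}\to0$ (since $q<2$), and verify the ingredients that the proofs of Theorems~\ref{Th4.2} and~\ref{Th4} actually use: the parameter condition $\gamma<\delta<\gamma+\frac{\gamma}{\frac12\gamma^2+1}$ holds by assumption; $\epsilon^{-1/2}(t)\ddot\epsilon(t)=q(q+1)t^{-q/2-2}$ is eventually dominated by $-\frac14(\delta-\gamma)\dot\epsilon(t)=\frac{q(\delta-\gamma)}{4}t^{-q-1}$ because $-q/2-2<-q-1$; and $\int_{t_1}^{t}\sqrt{\epsilon(s)}\,ds=\frac{2}{2-q}\bigl(t^{1-q/2}-t_1^{1-q/2}\bigr)\to+\infty$. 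Formula \eqref{wt} then becomes $\omega(t)=D_1\,t^{q(\frac12-\frac1{\gamma^2})}\exp\bigl(\frac{2(\delta-\gamma)}{2-q}t^{1-q/2}\bigr)$, so $1/\omega(t)$ (and $1/(\epsilon(t)\omega(t))$) decays faster than any power of $t$ because $1-q/2>0$.

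Next I would insert $\epsilon(t)=t^{-q}$ into the bound obtained in the proof of Theorem~\ref{Th4}, whose intermediate estimates \eqref{Eg} and \eqref{El} only use $\lim_{t\to+\infty}\frac{d}{dt}(1/\sqrt{\epsilon(t)})=0$ and $\epsilon^{-1/2}(t)\ddot\epsilon(t)\le-\frac14(\delta-\gamma)\dot\epsilon(t)$, both verified above; this gives $E(t)\le\frac{4\|x^*\|^2}{a}\epsilon^{-3}(t)\dot\epsilon^2(t)-\frac{2\|x^*\|^2}{a}\epsilon^{-1/2}(t)\dot\epsilon(t)+\frac{\bar D}{\omega(t)}=\frac{4q^2\|x^*\|^2}{a}\,t^{q-2}+\frac{2q\|x^*\|^2}{a}\,t^{-q/2-1}+\frac{\bar D}{\omega(t)}$, i.e. $E(t)=O(t^{q-2})+O(t^{-q/2-1})$. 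By Theorem~\ref{Th4.1}, $\|x(t)-x_{\epsilon(t)}\|^2\le E(t)/\epsilon(t)=t^qE(t)=O(t^{2q-2})+O(t^{q/2-1})$, and since $2q-2<q/2-1\Leftrightarrow q<\frac23$ this is $O(t^{q/2-1})$ for $0<q<\frac23$ and $O(t^{2q-2})$ for $\frac23\le q<1$, matching the stated rate.

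The last two quantities are where the main obstacle lies. Theorem~\ref{Th4.1} only gives $\|\dot x(t)\|^2\le(4+2\gamma^2)E(t)$ and $\|\mathcal{A}_\eta x(t)+\epsilon(t)x_{\epsilon(t)}\|^2\le E(t)/(\rho+\eta)$, and the same comparison of $t^{q-2}$ against $t^{-q/2-1}$ would only yield the \emph{larger} of the two powers, whereas the statement asks for $O(t^{q-2})$ precisely when $q<\frac23$ and $O(t^{-q/2-1})$ precisely when $q\ge\frac23$, i.e. the smaller one in each regime. To recover this I would split the $E$-bound into the $O(t^{q-2})$ piece stemming from the $\|\frac{d}{dt}x_{\epsilon(t)}\|^2$-terms and the $O(t^{-q/2-1})$ piece stemming from the $-\frac1a\dot\epsilon(t)\|x_{\epsilon(t)}\|^2$-term in \eqref{DE}, and then use the sharpened lower bound $E(t)\ge(\rho+\eta)\|\mathcal{A}_\eta x(t)+\epsilon(t)x_{\epsilon(t)}\|^2+\epsilon(t)\|x(t)-x_{\epsilon(t)}\|^2$ from \eqref{Ac} together with the rate for $\epsilon(t)\|x(t)-x_{\epsilon(t)}\|^2$ just derived, so that the slowly decaying part of $E(t)$ is charged to $\epsilon(t)\|x(t)-x_{\epsilon(t)}\|^2$ and does not inflate the bounds for the velocity and the Yosida term. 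Making this charging argument rigorous will likely require returning to the energy estimate \eqref{DE}, retaining the negative-coefficient terms in $\|\dot x(t)\|^2$ and $\|\mathcal{A}_\eta x(t)+\epsilon(t)x_{\epsilon(t)}\|^2$ rather than discarding them, and running an integrating-factor argument on the resulting refined inequality.
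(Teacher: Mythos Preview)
Your derivation of the rate for $\|x(t)-x_{\epsilon(t)}\|^2$ is correct. The paper takes a slightly different route: rather than quoting the display $E(t)\le\frac{4\|x^*\|^2}{a}\epsilon^{-3}\dot\epsilon^{\,2}-\frac{2\|x^*\|^2}{a}\epsilon^{-1/2}\dot\epsilon+\bar D/\omega(t)$ from the proof of Theorem~\ref{Th4}, it returns to Theorem~\ref{Th4.2} and estimates the integral $E_1(t)=\omega(t)^{-1}\int_{t_1}^{t}(\epsilon^{-5/2}\dot\epsilon^{\,2}-\dot\epsilon)\,\omega\,ds$ directly for $\epsilon(t)=t^{-q}$, via the same integrating-factor comparison that underlies \eqref{Eg}--\eqref{El}. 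Either way one obtains $E(t)=O(t^{-q/2-1})$ for $0<q<\tfrac23$ and $E(t)=O(t^{q-2})$ for $\tfrac23\le q<1$, and the stated rate for $\|x(t)-x_{\epsilon(t)}\|^2$ then follows from Theorem~\ref{Th4.1}.

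The obstacle you flag for $\|\dot x(t)\|^2$ and $\|\mathcal{A}_\eta x(t)+\epsilon(t)x_{\epsilon(t)}\|^2$ is not a gap in your reasoning but a misprint in the theorem: the exponents for these two quantities are interchanged between cases i) and ii). The paper's own proof applies Theorem~\ref{Th4.1} exactly as you propose and concludes with $\|\dot x(t)\|^2$, $\|\mathcal{A}_\eta x(t)+\epsilon(t)x_{\epsilon(t)}\|^2=O(t^{-q/2-1})$ in case~i) and $=O(t^{q-2})$ in case~ii), i.e.\ the \emph{larger} of the two powers in each regime, contradicting the displayed statement. Your proposed charging argument is therefore unnecessary; and as sketched it would not work anyway, since subtracting a known upper bound for $\epsilon(t)\|x(t)-x_{\epsilon(t)}\|^2$ from an upper bound for $E(t)$ does not yield an upper bound for the remaining nonnegative pieces of $E(t)$.
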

 
\begin{proof}
 By $(\ref{ut})$ and $(\ref{wt})$, we have
$$\mu(t)=(\frac{1}{2}-\frac{1}{{\gamma}^2})qt^{-1}+(\delta-\gamma)t^{-\frac{q}{2}},$$
and 
\begin{eqnarray}\label{wtt}
	\omega(t)=\exp\big(\int_{t_1}^{t}\mu(s)ds\big)&=&\exp\big[\int_{t_1}^{t}(\frac{1}{2}-\frac{1}{{\gamma}^2})qs^{-1}+(\delta-\gamma)s^{-\frac{q}{2}}ds\big]\nonumber\\
	&=&\big(\frac{t}{t_1}\big)^{(\frac{1}{2}-\frac{1}{{\gamma}^2})q}\exp\big[\frac{2(\delta-\gamma)}{2-q}(t^{\frac{2-q}{2}}-t^{\frac{2-q}{2}}_{1})\big]\nonumber\\
	&=&N_1 t^{(\frac{1}{2}-\frac{1}{{\gamma}^2})q}\exp\big(\delta_0 t^{\frac{2-q}{2}}\big),
\end{eqnarray}
where $N_1=\big[t^{(\frac{1}{2}-\frac{1}{{\gamma}^2})q}_{1}\exp(\delta_0 t^{\frac{2-q}{2}}_{1})\big]^{-1}$ and $\delta_0=\frac{2(\delta-\gamma)}{2-q}$.
We can verify that in the case $\epsilon(t)=\frac{1}{t^q}$ with $0<q<1$,
$$\lim_{t\rightarrow +\infty}\frac{d}{dt}(\frac{1}{\sqrt{\epsilon(t)}})=-\frac{1}{2}\lim_{t\rightarrow +\infty}{\epsilon}^{-\frac{3}{2}}(t)\dot{\epsilon}(t)=\frac{q}{2}\lim_{t\rightarrow +\infty} t^{\frac{q}{2}-1}=0.$$
Therefore, all the  assumptions of Theorem \ref{Th4.2} are satisfied. According to Theorem \ref{Th4.2}, we get
 $$E(t)\leq \frac{1}{a}E_1(t)\|x^*\|^2+E_2(t),$$
 where
\begin{eqnarray}\label{EE1}
E_1(t)=\frac{1}{\omega(t)}\int_{t_1}^{t}\bigg (\epsilon^{-\frac{5}{2}}(s){\dot{\epsilon}}^2(s)-\dot{\epsilon}(s)\bigg ) \omega(s)ds
\end{eqnarray}

and 
$$E_2(t)=\frac{\omega(t_1)E(t_1)}{\omega(t)}=\frac{\omega(t_1)E(t_1)}{N_1} t^{(-\frac{1}{2}+\frac{1}{{\gamma}^2})q}\exp\big(-\delta_0 t^{\frac{2-q}{2}}\big).$$
Since $0<q<1$ and $\delta>\gamma$, we infer that $E_2(t)$ has an exponential decay to zero, as $t\rightarrow +\infty$. As a consequence, there exists a positive constant $M$ such that
\begin{eqnarray}\label{ME1}
E(t)\leq M\|x^*\|^2 E_1(t)	
\end{eqnarray}
for all large enough $t$.

We only have to focus on the asymptotic behavior of $E_1(t)$.

$i)$: In the case $0<q<\frac{2}{3}$, we get $t^{q-2}< t^{-\frac{q}{2}-1}$ for all large enough $t$. It follows that  
\begin{equation}\label{f26-1}
E_1(t)\leq 2qt^{(\frac{1}{{\gamma}^2}-\frac{1}{2})q}\exp \big(-\delta_0 t^{\frac{2-q}{2}}\big)\int_{t_1}^{t}\big(s^{-\frac{q}{2}-\frac{q}{{\gamma}^2}-1} \big)\exp\big( \delta_0 s^{\frac{2-q}{2}}\big) ds.
\end{equation}
Let us estimate the integral $\int_{t_1}^{t}s^{-\frac{q}{2}-\frac{q}{{\gamma}^2}-1}\exp\big(\delta_0 s^{\frac{2-q}{2}}\big)ds$.
Observe further that
\begin{eqnarray}\label{f26-2}
&&\frac{d}{dt}\bigg (t^{-\frac{q}{{\gamma}^2}-1}\exp \big(\delta_0 t^{\frac{2-q}{2}}\big)\bigg )\nonumber\\
&=& \big[(-\frac{q}{{\gamma}^2}-1)t^{-\frac{q}{{\gamma}^2}-2}+\frac{2-q}{2}\delta_0t^{-\frac{q}{{\gamma}^2}-1-\frac{q}{2}}\big]\exp \big(\delta_0 t^{\frac{2-q}{2}}\big)\nonumber\\
&=& \big[(-\frac{q}{{\gamma}^2}-1)t^{-\frac{q}{{\gamma}^2}-2}+\frac{2-q}{4}\delta_0t^{-\frac{q}{{\gamma}^2}-1-\frac{q}{2}}+\frac{2-q}{4}\delta_0t^{-\frac{q}{{\gamma}^2}-1-\frac{q}{2}}\big]\cdot\exp \big(\delta_0 t^{\frac{2-q}{2}}\big)\nonumber\\
&\geq& \frac{2-q}{4}\delta_0t^{-\frac{q}{2}-\frac{q}{{\gamma}^2}-1}\exp \big(\delta_0 t^{\frac{2-q}{2}}\big),
\end{eqnarray}
where the inequality is due to $t^{-\frac{q}{{\gamma}^2}-2}<t^{-\frac{q}{{\gamma}^2}-1-\frac{q}{2}}$ for $0<q<1$. Combining \eqref{f26-1} and \eqref{f26-2}, we infer that 
 \begin{eqnarray*}
 E_1(t)&\leq&\frac{8q}{(2-q)\delta_0}t^{(\frac{1}{{\gamma}^2}-\frac{1}{2})q}\exp \big(-\delta_0 t^{\frac{2-q}{2}}\big)\int_{t_1}^{t} \frac{d}{ds}\bigg (s^{-\frac{q}{{\gamma}^2}-1}\exp \big(\delta_0 s^{\frac{2-q}{2}}\big)\bigg )ds\\
&=& \frac{8q}{(2-q)\delta_0t^{(\frac{1}{2}-\frac{1}{{\gamma}^2})q}\exp \big(\delta_0 t^ {\frac{2-q}{2}}\big)}t^{-\frac{q}{{\gamma}^2}-1}\exp \big(\delta_0 t^ {\frac{2-q}{2}}\big)\\
&&-\frac{8q}{(2-q)\delta_0t^{(\frac{1}{2}-\frac{1}{{\gamma}^2})q}\exp \big(\delta_0 t^ {\frac{2-q}{2}}\big)}t_1^{-\frac{q}{{\gamma}^2}-1}\exp \big(\delta_0 t_1^ {\frac{2-q}{2}}\big)\\
&\leq & \frac{8q}{(2-q)\delta_0}t^{-\frac{q}{2}-1}.
 \end{eqnarray*}
This together with $(\ref{ME1})$ yields
$$E(t)\leq  \frac{8qM}{(2-q)\delta_0}\|x^*\|^2t^{-\frac{q}{2}-1}$$
for all large enough $t$. Dividing it by $\epsilon(t)=\frac{1}{t^q}$, we have
$$\frac{E(t)}{\epsilon(t)}\leq \frac{8qM}{(2-q)\delta_0}\|x^*\|^2t^{\frac{q}{2}-1}.$$
In view of Theorem $\ref{Th4.1}$, we obtain at once that  
$$\|x(t)-x_{\epsilon(t)}\|^2=O(t^{\frac{q}{2}-1}),\qquad\|\dot{x}(t)\|^2=O(t^{-\frac{q}{2}-1}),$$
and 
$$\|\mathcal{A}_\eta x(t)+\epsilon(t)x_{\epsilon(t)}\|^2=O\big( t^{-\frac{q}{2}-1}\big)$$
as $t\rightarrow +\infty$.
 
$ii)$: In the case $\frac{2}{3}\leq q<1$, we get $t^{q-2}\geq t^{-\frac{q}{2}-1}$ for all large enough $t$. It follows from $(\ref{EE1})$  that
\begin{eqnarray}\label{f26-3}
E_1(t)&=&t^{(\frac{1}{{\gamma}^2}-\frac{1}{2})q}\exp \big(-\delta_0 t^{\frac{2-q}{2}}\big)\int_{t_1}^{t}\big(q^2 s^{q-\frac{q}{{\gamma}^2}-2}+q s^{-\frac{q}{2}-\frac{q}{{\gamma}^2}-1} \big)\exp\big( \delta_0 s^{\frac{2-q}{2}}\big) ds\nonumber\\
&\leq&	2q^2t^{(\frac{1}{{\gamma}^2}-\frac{1}{2})q}\exp \big(-\delta_0 t^{\frac{2-q}{2}}\big)\int_{t_1}^{t}\big(s^{q-\frac{q}{{\gamma}^2}-2} \big)\exp\big( \delta_0 s^{\frac{2-q}{2}}\big) ds.
\end{eqnarray}
Let us estimate the integral $\int_{t_1}^{t} s^{(1-\frac{1}{{\gamma}^2})q-2}\exp \big(\delta_0 s^{\frac{2-q}{2}}\big)ds$. Notice that 
\begin{eqnarray}\label{f26-4}
&&\frac{d}{dt}\bigg (t^{\frac{3q}{2}-\frac{q}{{\gamma}^2}-2}\exp \big(\delta_0 t^{\frac{2-q}{2}}\big)\bigg )\nonumber\\
&=&\big[ \big (\frac{3q}{2}-\frac{q}{{\gamma}^2}-2\big ) t^{\frac{3q}{2}-\frac{q}{{\gamma}^2}-3}+\frac{2-q}{2}\delta_0t^{(1-\frac{1}{{\gamma}^2})q-2}\big]\exp \big(\delta_0 t^{\frac{2-q}{2}}\big)\nonumber\\
&=& \big[\big (\frac{3q}{2}-\frac{q}{{\gamma}^2}-2\big ) t^{\frac{3q}{2}-\frac{q}{{\gamma}^2}-3}+\frac{2-q}{4}\delta_0t^{(1-\frac{1}{{\gamma}^2})q-2}+\frac{2-q}{4}\delta_0t^{(1-\frac{1}{{\gamma}^2})q-2}\big]\exp \big(\delta_0 t^{\frac{2-q}{2}}\big)\nonumber\\
&\geq& \frac{2-q}{4}\delta_0t^{(1-\frac{1}{{\gamma}^2})q-2}\exp \big(\delta_0 t^{\frac{2-q}{2}}\big),
\end{eqnarray}
where the inequality is from the fact that $t^{\frac{3q}{2}-\frac{q}{{\gamma}^2}-3}<t^{(1-\frac{1}{{\gamma}^2})q-2}$ for $0<q<1$. It follows from \eqref{f26-3} and \eqref{f26-4} that for $t$ large enough, 
\begin{eqnarray*}
E_1(t)&\leq&\frac{8q^2}{(2-q)\delta_0 t^{(\frac{1}{2}-\frac{1}{{\gamma}^2})q}\exp \big(\delta_0 t^ {\frac{2-q}{2}}\big)}\int_{t_1}^{t} \frac{d}{ds}\bigg (s^{(1-\frac{1}{{\gamma}^2})q+\frac{q}{2}-2}\exp \big(\delta_0 s^{\frac{2-q}{2}}\big)\bigg )ds\\
&=& \frac{8q^2}{(2-q)\delta_0 t^{(\frac{1}{2}-\frac{1}{{\gamma}^2})q}\exp \big(\delta_0 t^ {\frac{2-q}{2}}\big)}t^{(1-\frac{1}{{\gamma}^2})q-2+\frac{q}{2}}\exp \big(\delta_0 t^ {\frac{2-q}{2}}\big)\\
&&-\frac{8q^2}{(2-q)\delta_0 t^{(\frac{1}{2}-\frac{1}{{\gamma}^2})q}\exp \big(\delta_0 t^ {\frac{2-q}{2}}\big)}t_1^{(1-\frac{1}{{\gamma}^2})q-2+\frac{q}{2}}\exp \big(\delta_0 t_1^ {\frac{2-q}{2}}\big)\\
&\leq & \frac{8q^2}{(2-q)\delta_0}t^{q-2},
\end{eqnarray*} 
which, in combination with $(\ref{ME1})$, further yields
$$E(t)\leq  \frac{8q^2M}{(2-q)\delta_0}\|x^*\|^2t^{q-2}.$$
After dividing it by $\epsilon(t)=\frac{1}{t^q}$, we deduce that
$$\frac{E(t)}{\epsilon(t)}\leq \frac{8q^2M}{\delta_0(2-q)}\|x^*\|^2 t^{2q-2}$$
for all large enough $t$. According to Theorem $\ref{Th4.1}$, we obtain at once that  
$$\|x(t)-x_{\epsilon(t)}\|^2=O(t^{2q-2}),\qquad\|\dot{x}(t)\|^2=O(t^{q-2}),$$
and 
$$\|\mathcal{A}_\eta x(t)+\epsilon(t)x_{\epsilon(t)}\|^2=O\big( t^{q-2}\big)$$
as $ t\rightarrow +\infty$.
 
\end{proof}

 \section{Numerical illustration}\label{Ex}
In this section, we illustrate the validity of the  system $(\ref{DS})$ by an example. The simulations are conducted in Matlab (version 9.4.0.813654)R2018a. All the numerical procedures are performed on a personal computer with Inter(R) Core(TM) i7-4600U, CORES 2.69GHz and RAM 8.00GB.
All the dynamical systems in this section are solved numerically by the ode45 function in Matlab on the interval $[0.1,100]$.

\begin{example}\label{Ex1} Consider the following inclusion problem
$$0\in \mathcal{A}(x),$$
 where $\mathcal{A}=
\begin{pmatrix}
1 & 0 & 0\\
0 & 0 & 0\\
0 & 0 & -1
\end{pmatrix}$. It is easily verified that the set of solutions is $\{(0,b,0)^T:b\in \mathbb{R}\}$ and $x^*=(0,0,0)^T$ is the minimum norm solution. We will show that $\mathcal{A}$ is a maximally $\rho$-comonotone operator with $\rho=-1$. According to  $(i)$ of Proposition \ref{z}, we just need to prove ${\mathcal{A}}^{-1}+Id$ is a maximally monotone operator. Let $ x=(x_1,x_2,x_3)^T$ and $u=(u_1,u_2,u_3)^T$  such  that $u=\mathcal{A} x=(x_1,0,-x_3)^T$. Since $x\in {\mathcal{A}}^{-1}u\Leftrightarrow u=\mathcal{A}x$,  it follows that  $\text{Dom} {\mathcal{A}}^{-1}=\{(u_1,0,u_3)^T: u_1,u_3 \in \mathbb{R}\}$ and
\begin{equation}\label{FFYP}{\mathcal{A}}^{-1}u=\begin{cases}
\emptyset, \qquad\qquad\qquad\qquad\qquad~u\notin \text{Dom} {\mathcal{A}}^{-1};\\
\{(u_1,\lambda,-u_3)^T:\lambda\in \mathbb{R}\}, ~~u\in\text{Dom} {\mathcal{A}}^{-1}.
\end{cases}
\end{equation}
Let $ u=(u_1,0,u_3)^T,~v=(v_1,0,v_3)^T\in Dom {\mathcal{A}}^{-1}$,  and let $y\in ({\mathcal{A}}^{-1}+Id)u$, $z\in ({\mathcal{A}}^{-1}+Id)v$. Then there exist $\lambda_1,\lambda_2 \in \mathbb{R}$ such that $y=(2u_1,\lambda_1,0)^T$ and $z=(2v_1,\lambda_2,0)^T$. We can infer that
$$\langle y-z, u-v \rangle =2\|u_1-v_1\|^2\geq 0.$$
Therefore, ${\mathcal{A}}^{-1}+Id$ is a monotone operator.  To justify the maximality of  ${\mathcal{A}}^{-1}+Id$, let $\bar u=(\bar u_1,0,\bar u_3)^T \in \text{Dom}({\mathcal{A}}^{-1}+Id)$ and $\bar x=(\bar x_1,\bar x_2,\bar x_3)^T$ such that
$$\langle y-\bar x, u-\bar u\rangle \geq 0,\qquad \forall (u,y)\in \text{gra}({\mathcal{A}}^{-1}+Id).$$ 
This together with \eqref{FFYP} implies that
\begin{eqnarray*}
\langle y-\bar x, u-\bar u\rangle =(2u_1-\bar{x}_1)(u_1-\bar{u}_1)	-\bar{x}_3(u_3-\bar{u}_3)\geq 0,\qquad\forall u_1,u_3\in \mathbb{R}.
\end{eqnarray*}
This yields  $\bar{x}_3=0$ and $\bar{x}_1=2\bar{u}_1$. Again using  \eqref{FFYP}, we have $\bar x\in ({\mathcal{A}}^{-1}+Id)\bar{u}$.
Therefore, ${\mathcal{A}}^{-1}+Id$ is a maximally monotone, and so $\mathcal{A}$ is a maximally $\rho$-comonotone operator with $\rho=-1$.

Next, we test the behaviors of the  system $(\ref{DS})$ on Example \ref{Ex1}. 

We take $\gamma=1$, $\delta=\frac{4}{3}$, $\eta=3$ and $\epsilon(t)=t^{-q}$ in the system $(\ref{DS})$ with $x(t_0)=(1,1,1)^T$ and $\dot{x}(t_0)=(1,2,3)^T$. Figure \ref{Figure1} depicts the asymptotical behavior of the trajectory $x(t)$, the velocity $\dot{x}(t)$ and the Yosida regularization $\mathcal{A}_\eta x(t)$ generated by $(\ref{DS})$ with parameter $q=\frac{1}{5}$, $q=\frac{1}{3}$ and $q=\frac{1}{2}$, respectively. Figure \ref{Figure2} depicts the asymptotical behavior of the trajectory $x(t)$, the velocity $\dot{x}(t)$ and the Yosida regularization $\mathcal{A}_\eta x(t)$ generated by $(\ref{DS})$ with parameter $q=\frac{2}{3}$, $q=\frac{3}{4}$ and $q=\frac{5}{6}$, respectively.

Next, we compare the behaviorof the trajectory  $x(t)$  of the system  $\eqref{DS}$ with that of the systen $\eqref{Tds}$ on  Example \ref{Ex1}. We take $\gamma=1$, $\delta=\frac{4}{3}$, $\eta=3$ and $\epsilon(t)=t^{-\frac{1}{2}}$ in the system $(\ref{DS})$ and take $\alpha=\frac{4}{3}$, $\beta=1$ and $\eta=3$ in the system $(\ref{Tds})$ with $x(t_0)=(1,1,1)^T$ and $\dot{x}(t_0)=(1,2,3)^T$. As shown in Figure \ref{Figure3}, the trajectory $x(t)$ of the system $(\ref{DS})$ converges to the  minimum norm solution $x^*$, while the trajectory $x(t)$ of the system  $\eqref{Tds}$ converges a solution which need not to be the  minimum norm solution.


\end{example}

\begin{figure*}[h]
 \centering
 {
  \begin{minipage}[t]{0.31\textwidth}
   \centering
   \includegraphics[width=1.8in]{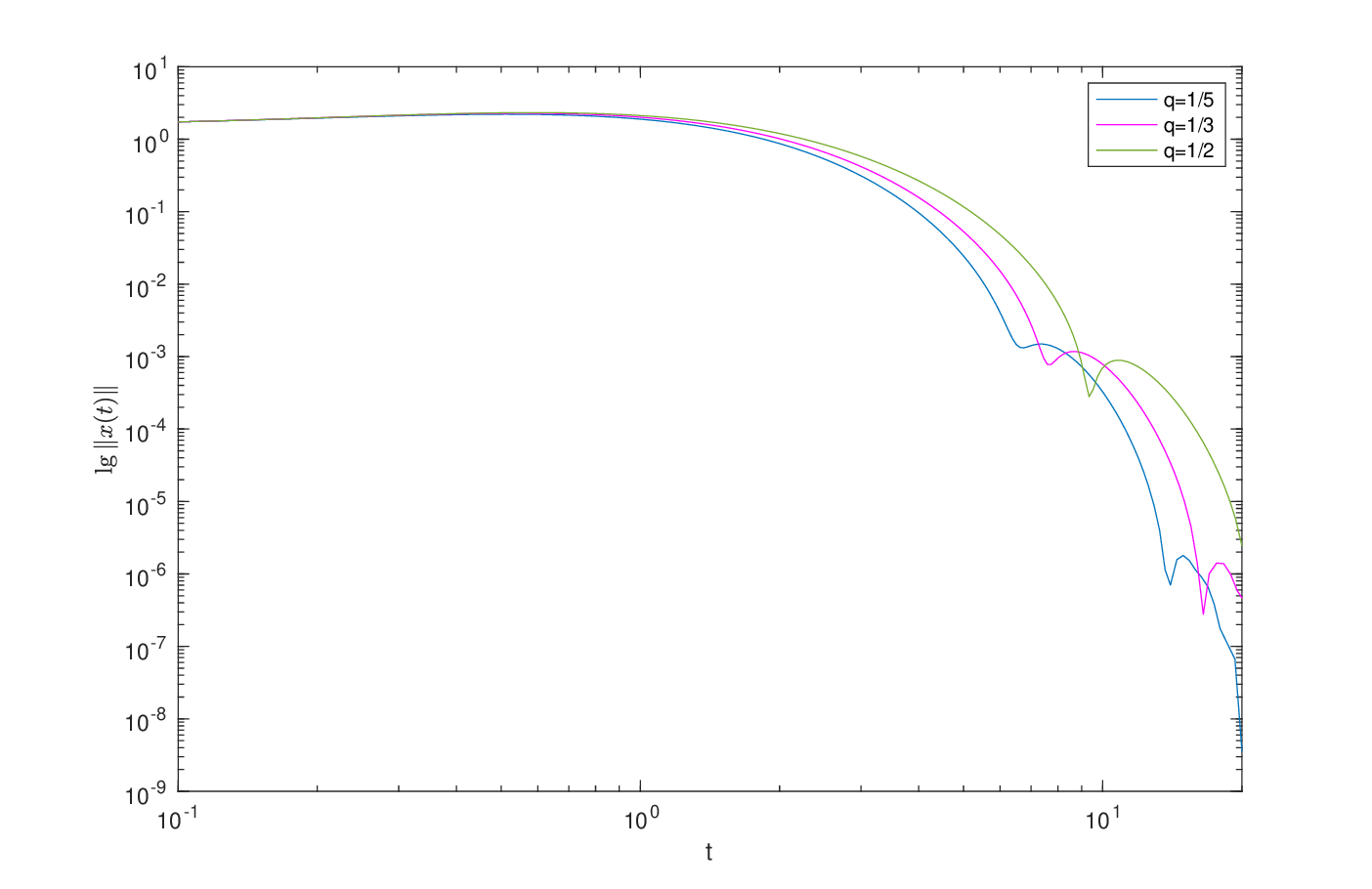}
  \end{minipage}
 }
 {
  \begin{minipage}[t]{0.31\textwidth}
   \centering
   \includegraphics[width=1.8in]{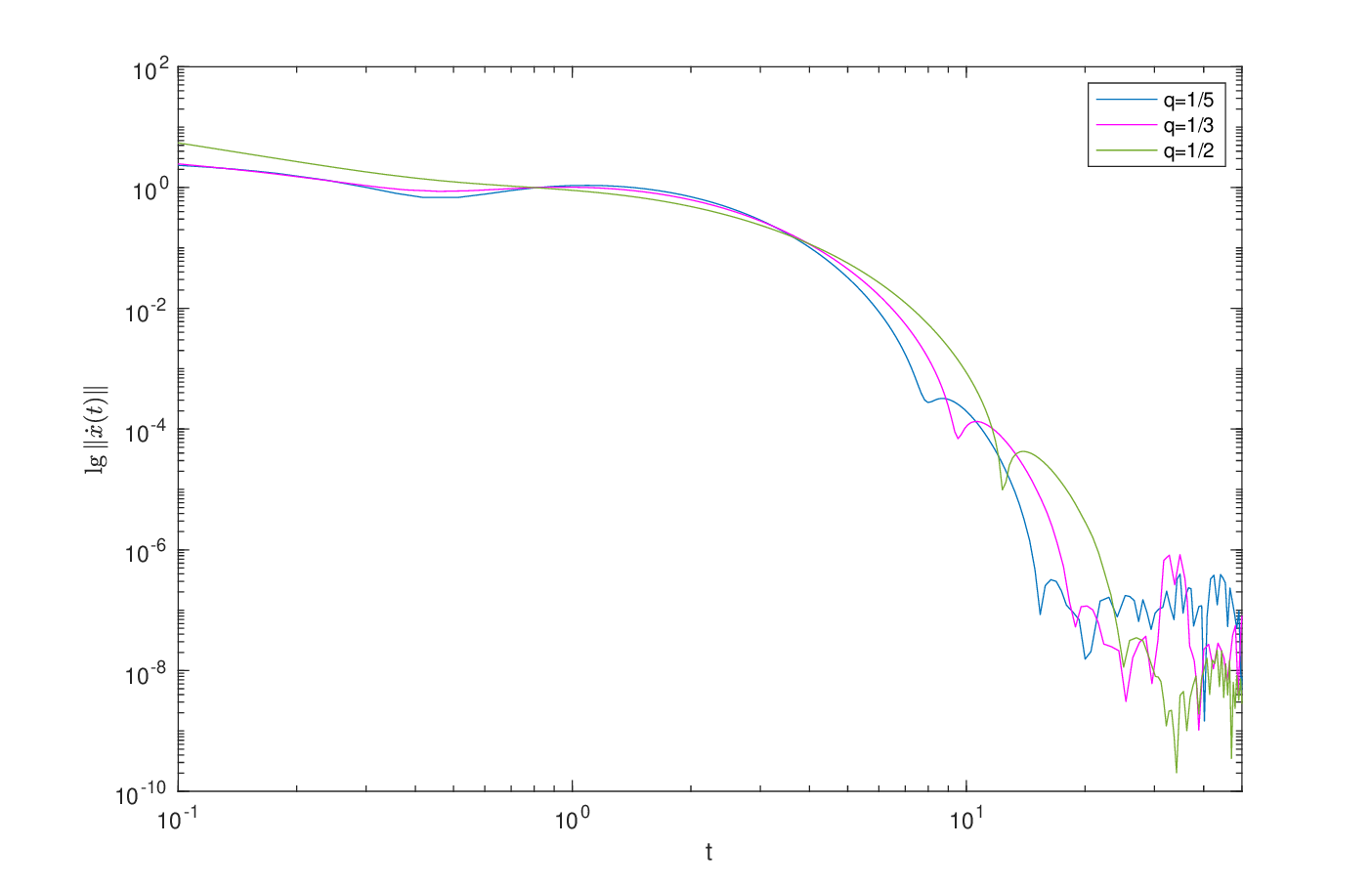}
  \end{minipage}
 }
 {
  \begin{minipage}[t]{0.31\textwidth}
   \centering
   \includegraphics[width=1.8in]{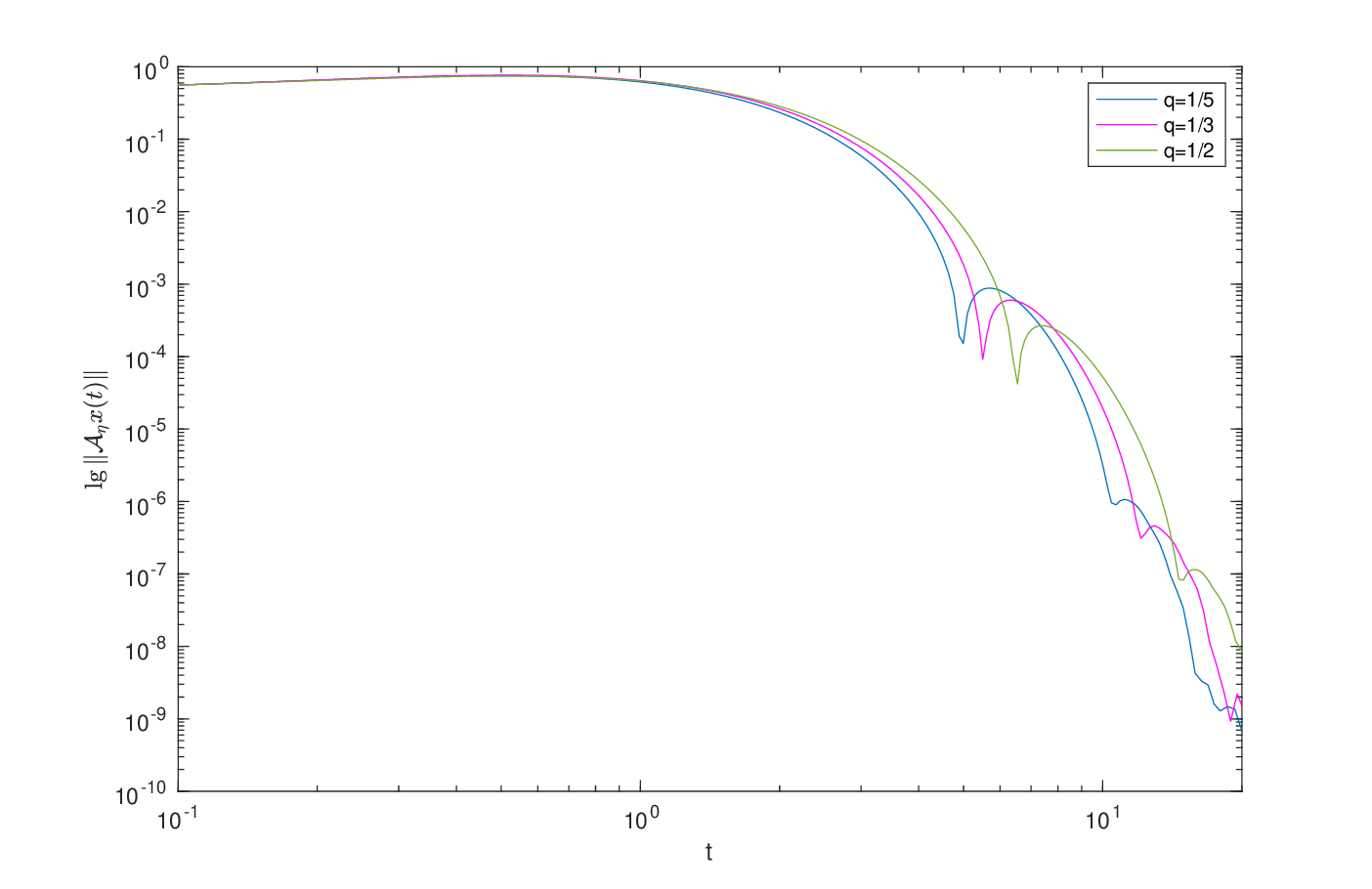}
  \end{minipage}
 }
 \caption{Rescaled iteration errors $\text{ lg } \|x(t)-x^*\|$, $\text{ lg } \|\dot{x}(t)\|$, $\text{ lg }\|\mathcal{A}_{\eta}x(t)\|$ of \eqref{DS} in Example \ref{Ex1}.}
 \label{Figure1}
 \centering
\end{figure*}

\begin{figure*}[h]
 \centering
 {
  \begin{minipage}[t]{0.31\textwidth}
   \centering
   \includegraphics[width=1.8in]{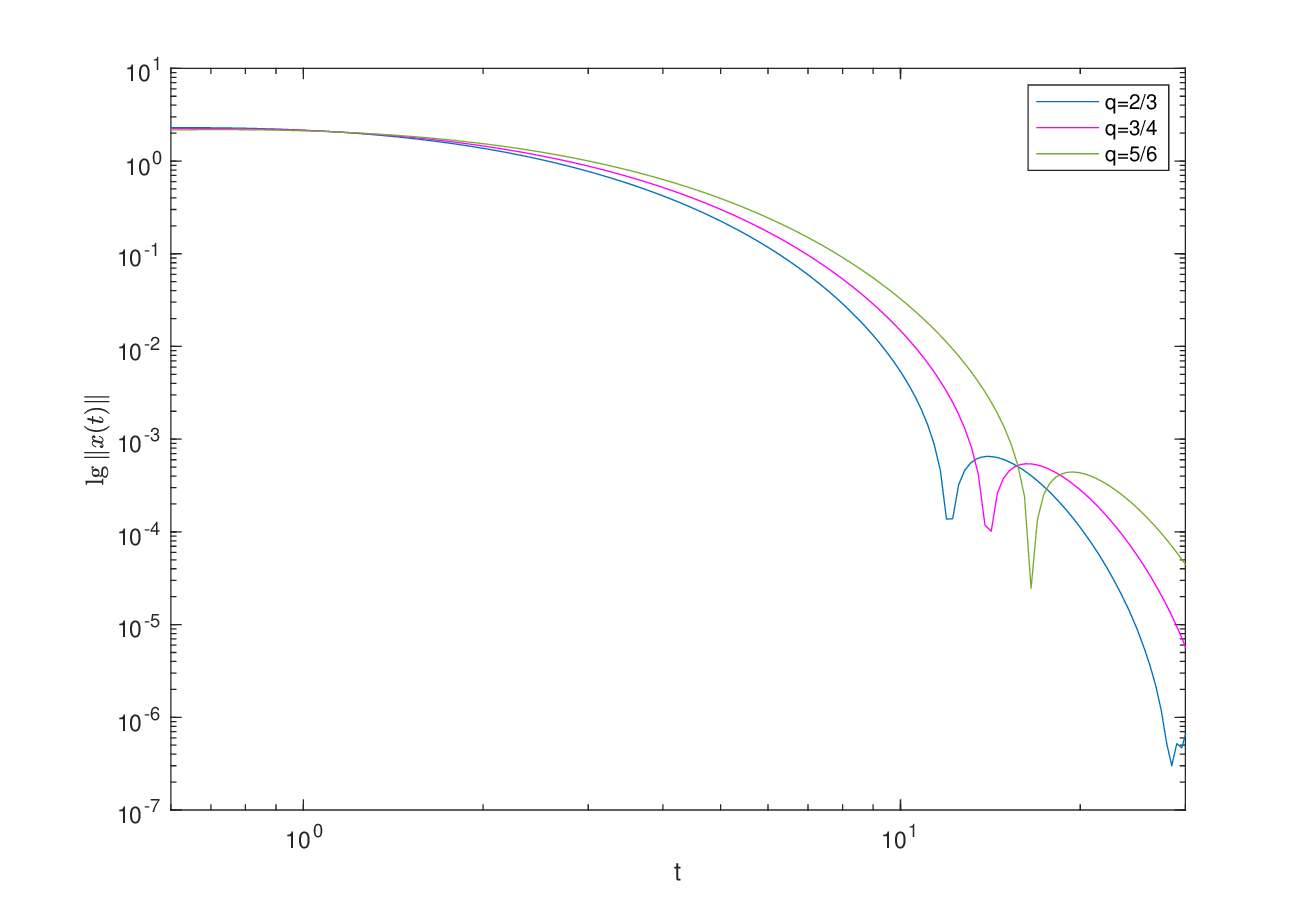}
  \end{minipage}
 }
 {
  \begin{minipage}[t]{0.31\textwidth}
   \centering
   \includegraphics[width=1.8in]{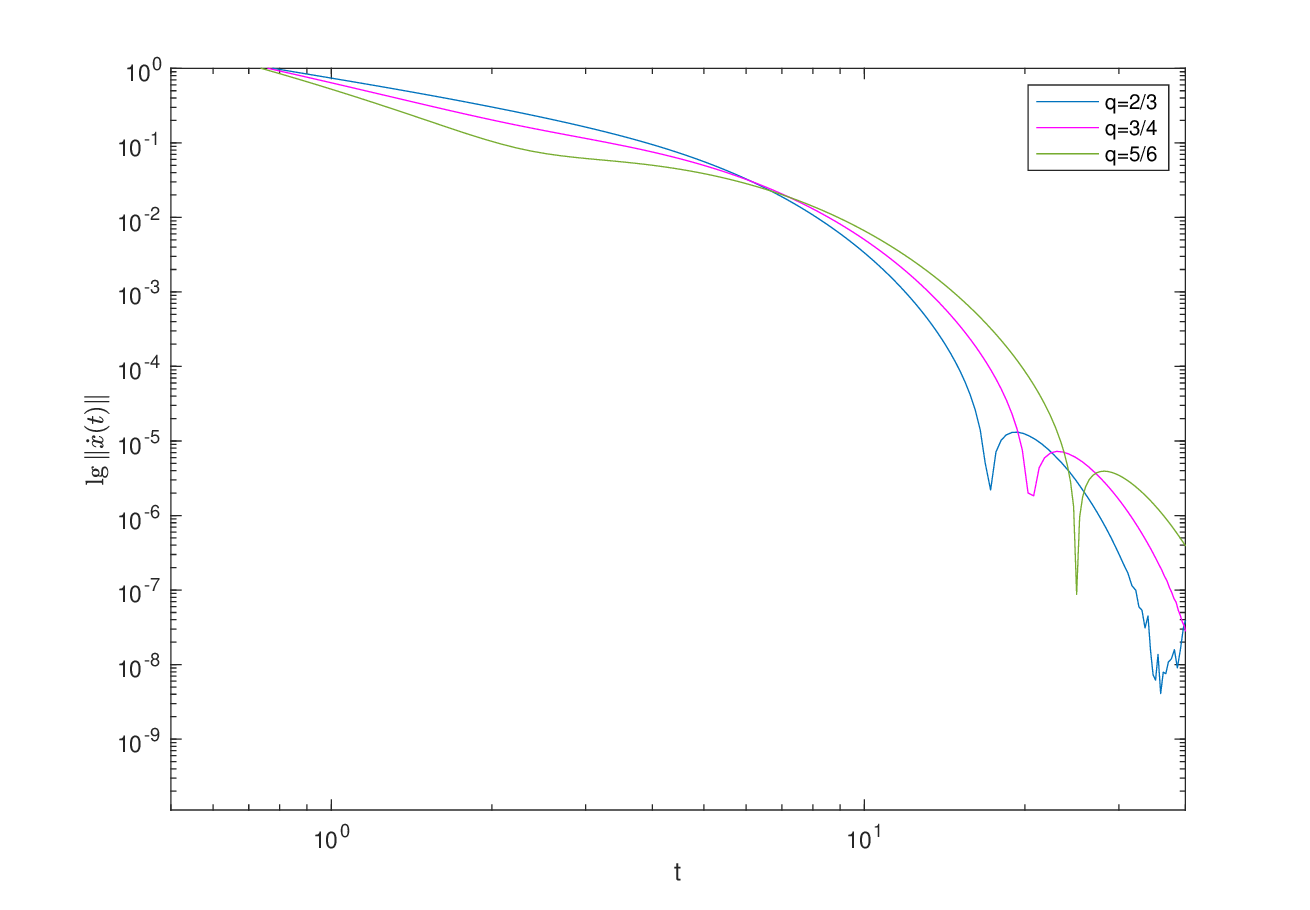}
  \end{minipage}
 }
 {
  \begin{minipage}[t]{0.31\textwidth}
   \centering
   \includegraphics[width=1.8in]{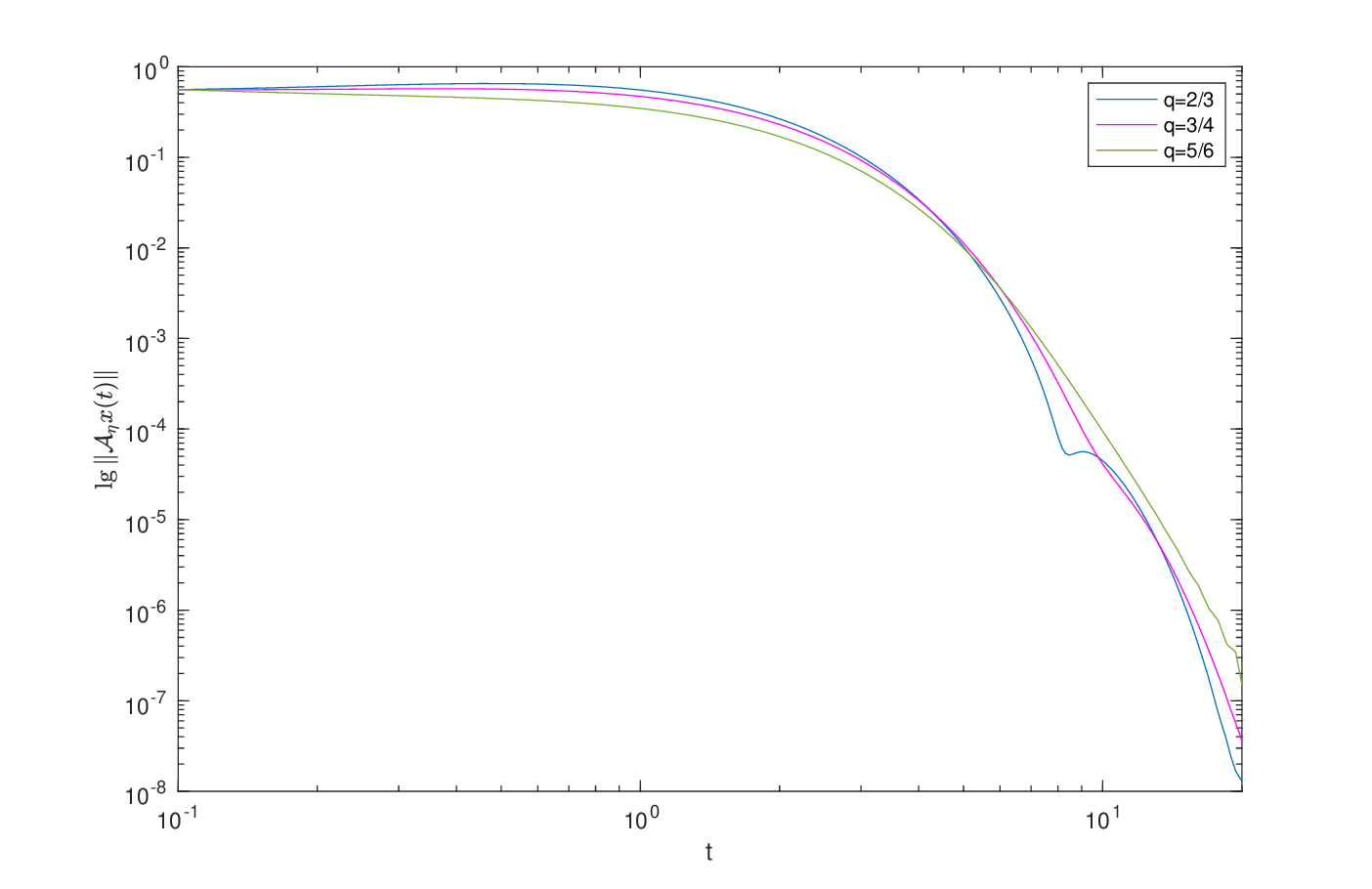}
  \end{minipage}
 }
 \caption{Rescaled iteration errors $\text{ lg } \|x(t)\|$, $\text{ lg } \|\dot{x}(t)\|$, $\text{ lg }\|\mathcal{A}_{\eta}(x(t)\|$ of \eqref{DS} in Example \ref{Ex1}.}
 \label{Figure2}
 \centering
\end{figure*}

\begin{figure*}[h]
 \centering
 {
  \begin{minipage}[t]{0.31\textwidth}
   \centering
   \includegraphics[width=1.8in]{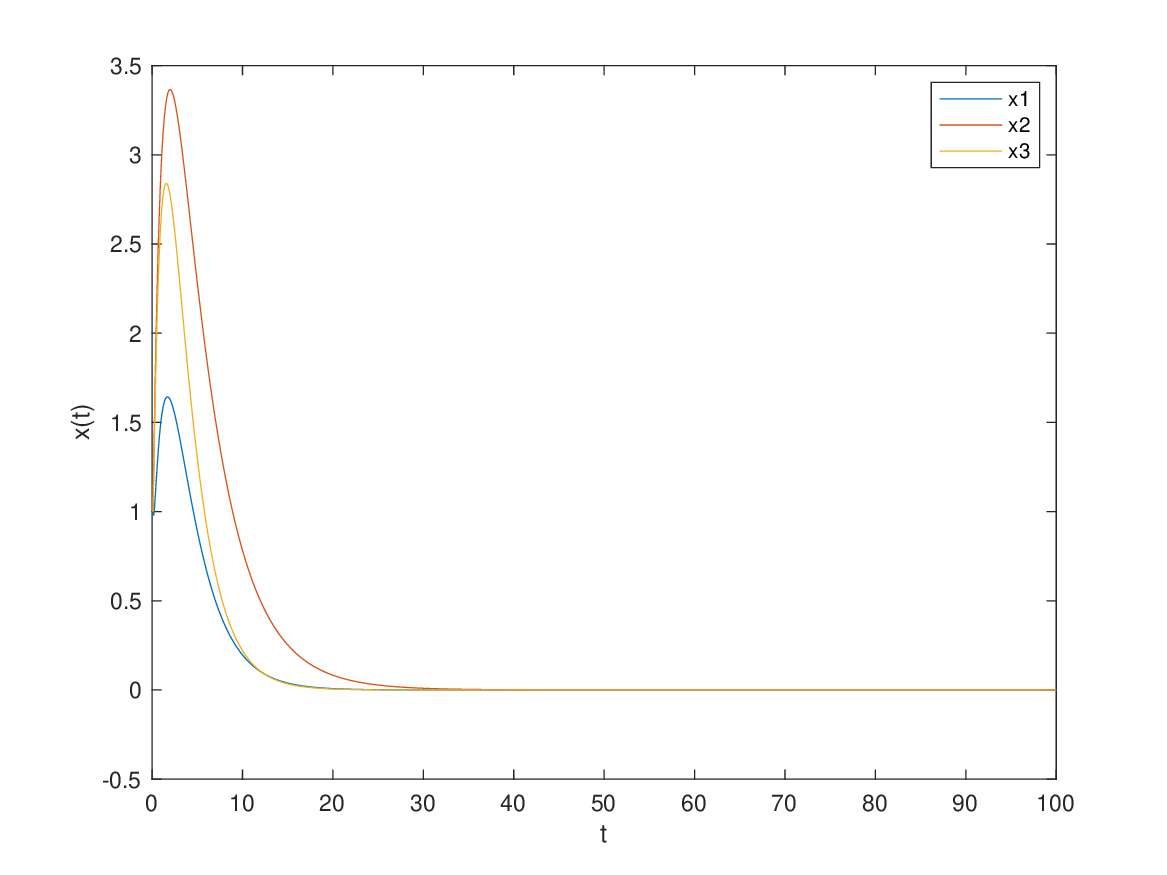}
  \end{minipage}
 }
 {
  \begin{minipage}[t]{0.31\textwidth}
   \centering
   \includegraphics[width=1.8in]{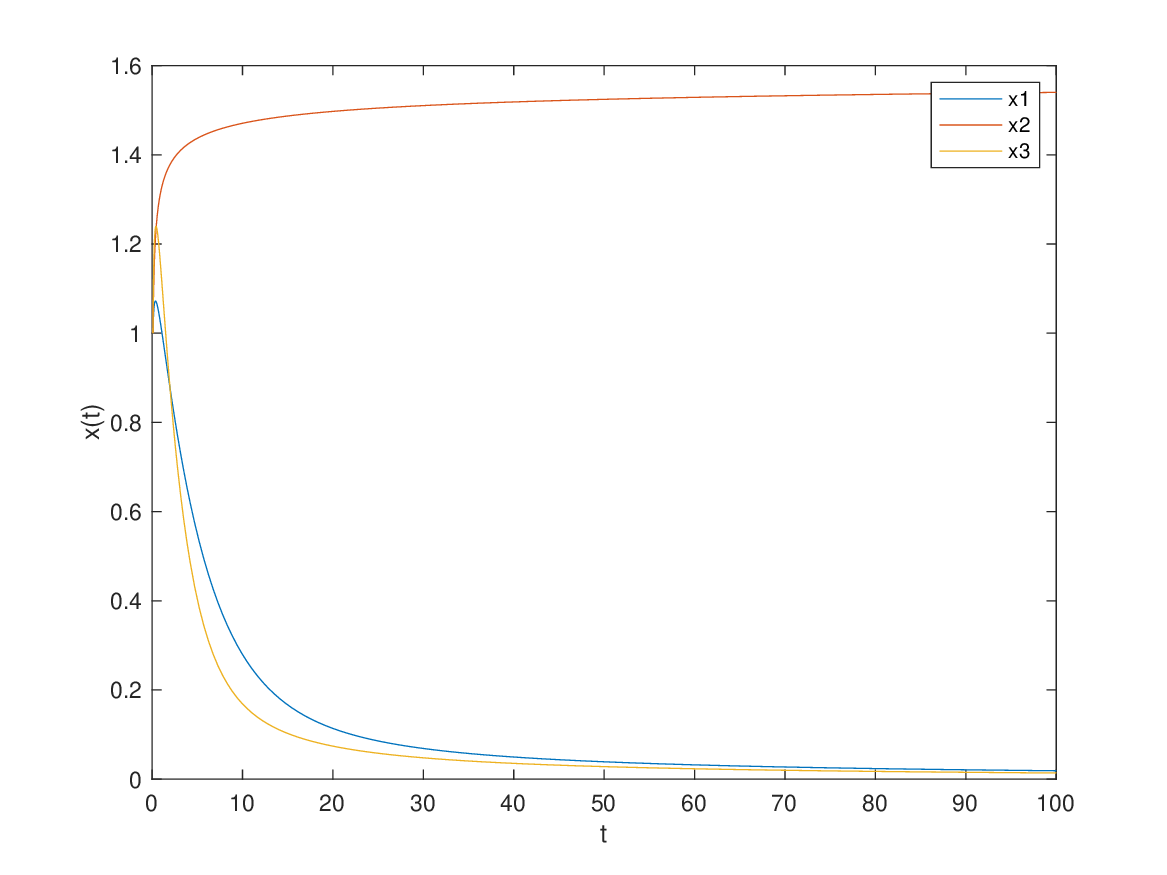}
  \end{minipage}
 }
 \caption{Transient behaviors of $x(t)$ for  systems $(\ref{DS})$ and $(\ref{Tds})$ in Example $\ref{Ex1}$.}
 \label{Figure3}
 \centering
\end{figure*}






\section*{Disclosure statement}
No potential conflict of interest was reported by the authors.

\end{document}